\newmdenv[linecolor=red,backgroundcolor=gray!10]{myframe}
\pgfplotsset{compat=1.13}
\newtheorem{theorem}{Theorem}[section]
\newtheorem{corollary}[theorem]{Corollary}
\newtheorem{lemma}[theorem]{Lemma}
\newtheorem{definition}[theorem]{Definition}
\newtheorem{remark}[theorem]{Remark}
\newtheorem{proposition}[theorem]{Proposition}
\newcommand{\ie}{\textit{i.e.}\,}
\newcommand{\eg}{\textit{e.g.}\,}
\newcommand{\R}{\mathbb R}
\newcommand{\eps}{\varepsilon}
\newcommand{\black}{\color{black}}
\newcommand{\dd}{\;\mathrm{d} }
\newcommand{\ddx}{\;\mathrm{d} {\mathbf x}}
\newcommand{\mesh}{\mathcal{T}} 
\newcommand{\pos}{{\mathbf x}}
\newcommand{\levelset}{\phi}
\newcommand{\setIk}{I_{\pos_k}}
\newcommand{\setRk}{R_{\pos_k}}
\newcommand{\detJl}{\left|{\textup{\text{det}}}J_l\right|}
\newcommand{\targetU}{\hat u}
\newcommand{\objectiveFunction}{g}
\newcommand{\phiObjectiveFunction}{J}
\newcommand{\redObjectiveFunction}{\mathfrak g} 
\newcommand{\redPhiObjectiveFunction}{\mathcal J} 
\newcommand{\Tplus}{T^{-\rightarrow+}_{k,\varepsilon}}
\newcommand{\Tminus}{T^{+\rightarrow-}_{k,\varepsilon}}
\renewcommand{\S}{S_{k,\varepsilon}}
\newcommand{\SComplex}{ S_{k,ih}}
\newcommand{\TplusComplex}{T^{-\rightarrow+}_{k, ih}}
\newcommand{\TminusComplex}{T^{+\rightarrow-}_{k,ih}}
\newcommand{\operator}{O} 
\newcommand{\discreteOperator}{O_{k,\varepsilon}} %
\newcommand{\setTplus}{\mathfrak T^+}
\newcommand{\setTminus}{\mathfrak T^-}
\newcommand{\setS}{\mathfrak S}
\newcommand{\symmDiff}{\triangle}
\newcommand{\defC}{c_t}
\newcommand{\changeArea}{\delta_{k,\eps} a}
\newcommand{\topshapeDerivative}{{topological-shape }}
\newcommand{\mylim}{\underset{t \searrow 0}{\mbox{lim }}}
\newcommand{\stuff}[1]{{\footnotesize \color{gray}}}
\newcommand{\BL}{\dot{BL}(\mathbb R^2)}
\newcommandx{\pcomment}[2][1=]{\todo[linecolor=red,backgroundcolor=red!25,bordercolor=red,#1]{#2}}
\newcommandx{\mcomment}[2][1=]{\todo[linecolor=olive,backgroundcolor=olive!25,bordercolor=olive,#1]{#2}}
\begin{document}



\title{A unified approach to shape and topological sensitivity analysis of discretized optimal design problems}


\author{P. Gangl$^{1,2,3}$ and M.H. Gfrerer$^{4}$}

\date{$^1$Johann Radon Institute for Computational and Applied Mathematics, Altenberger Stra\ss{}e 69, 4040 Linz, Austria\\
$^2$Institute of Applied Mathematics, Graz University of Technology, Steyrergasse 30/III, 8010 Graz, Austria \\
$^3$Chair of Applied Mathematics (Continuous Optimization), Friedrich Alexander University Erlangen-N\"urnberg, Cauerstra\ss{}e 11, 91058 Erlangen, Germany\\
$^4$Institute of Applied Mechanics, Graz University of Technology, Technikerstrasse 4, 8010 Graz, Austria}

%

\maketitle

\begin{abstract}
We introduce a unified sensitivity concept for shape and topological perturbations and perform the sensitivity analysis for a discretized PDE-constrained design optimization problem in two space dimensions. We assume that the design is represented by a piecewise linear and globally continuous level set function on a fixed finite element mesh and relate perturbations of the level set function to perturbations of the shape or topology of the corresponding design. We illustrate the sensitivity analysis for a problem that is constrained by a reaction-diffusion equation and draw connections between our discrete sensitivities and the well-established continuous concepts of shape and topological derivatives. Finally, we verify our sensitivities and illustrate their application in a level-set-based design optimization algorithm where no distinction between shape and topological updates has to be made.

\end{abstract}

\tableofcontents

%
%
%
%



%
%

\section{Introduction}
Numerical methods for the design optimization of technical systems are of great interest in science and engineering. Applications include the optimization of mechanical structures \cite{sigmund2013topology,allaire2004structural}, electromagnetic devices \cite{GanglLangerLaurainMeftahiSturm2015,AmstutzGangl2019}, fluid flow \cite{HaubnerSiebenbornUlbrich2021}, heat dissipation \cite{Hagg2018} and many more.
There exist several different approaches to computational design optimization. On the one hand, shape optimization techniques based on the mathematical concept of shape derivatives \cite{DZ2} can modify boundaries and material interfaces in a smooth way, but typically cannot alter the topology of a design. An exception being the level set method for shape optimization \cite{allaire2004structural} where the design is represented by the zero level set of a design function whose evolution is guided by shape sensitivities via a transport equation. While this approach allows for merging of components, it lacks a nucleation mechanism and is often coupled with the topological derivative concept \cite{sokolowski1999topological,b_NovoSoko}, see e.g. \cite{burger2004incorporating,AllaireJouve:2006a}.
In the class of density-based topology optimization methods \cite{bendsoe2003topology}, a design is represented by a density function $\rho(x)$ that is allowed to attain any value in the interval $[0,1]$. Then, regions with $\rho(x) = 0$ and $\rho(x) = 1$ are interpreted as occupied by material 1 and 2, respectively, while intermediate density values $0<\rho(x) < 1$ are penalized in order to obtain designs that are almost ``black-and-white''. One advantage of density based methods is that the system response depends continuously on $\rho$ and the standard notions of derivatives in vector spaces can be applied. Here, interfaces are typically not crisp and there is no measure of optimality with respect to shape variations at the interface.
Finally we mention the level-set algorithm for topology optimization introduced in \cite{amstutz2006new}, where the design is guided solely by the topological derivative, which however is not defined on the material interfaces. As a consequence, the final designs cannot be shown to be optimal with respect to shape variations at the interface. This aspect has been thorougly analyzed in \cite{amstutz2018consistent} where the authors draw a connection to density-based methods and, for two particular problem classes, propose an interpolation scheme which relates the derivative with respect to the density function, to topological and shape derivatives in the interior and on the interface, respectively.

The goal of this paper is to unify the concepts of topological and shape perturbations and to treat design optimization problems by a unified sensitivity, called the \textit{\topshapeDerivative derivative}. In this way, we aim at combining topological sensitivity information (related to the topological derivative) in the interior of each subdomain and shape sensitivity information (related to the shape derivative) at the material interface. While the topological derivative is defined as the sensitivity of a design-dependent cost function with respect to the introduction of a small hole or inclusion of different material, the shape derivative is defined as the cost function's sensitivity with respect to a transformation of the domain. In order to unify these two concepts, we consider a domain description by means of a continuous level set function which attains positive values in one of two subdomains and negative values in the other. Then a perturbation of the level set function in the interior of a subdomain can be related to a topological perturbation, and a perturbation close to the material interface can be seen as a perturbation of the shape of the domain. We remark that this point of view is in alignment with the concept of dilations of points and curves as introduced in \cite{delfour2018topological}, see also \cite{Delfour_engcomp_2022}. In principle, this idea was already followed in \cite{bernland2018acoustic}, however only for the case of shape optimization and not in combination with topology optimization. In \cite{laurain2018analyzing} the author represents domains by level set functions and relates shape and topological derivatives of shape functionals to derivatives with respect to the level set function in a continuous setting without PDE constraints.
In contrast to this, we consider PDE-constrained problems, but our analysis is performed on the discrete level, i.e. we follow the paradigm ``discretize-then-optimize'' for our sensitivity analysis with respect to a level set function.

The rest of this paper is organized as follows: In Section \ref{s_contiSetting}, we introduce the model problem and the classical concepts of topological and shape derivative in the continuous setting. After presenting the discretized setting in Section \ref{sec::discretization}, we proceed to compute the numerical \topshapeDerivative derivative of our discretized model problem in Section \ref{sec::numTopShapeDer}. In Section \ref{sec::connection} we compare the computed sensitivities with the sensitivities obtained by discretizing the continuous formulas. Finally we verify our computed formulas and present optimization results in Section \ref{sec::numericalExamples} before giving a conclusion in Section \ref{sec::conclusion}.

\section{Model problem and continuous setting} \label{s_contiSetting}
Let $D$ be a given, fixed, open and bounded hold-all domain and $\Omega \subset D$ an open and measurable subset. Let the boundary of $D$ be decomposed into $\Gamma_D, \Gamma_N \subset D$ with $\overline \Gamma_D \cup \overline \Gamma_N = \partial D$ and $\Gamma_D \cap \Gamma_N = \emptyset$.
In the present paper, we consider a topology optimisation problem  with a tracking type cost function
\begin{equation}\label{eq::ContinuousObjective}
\begin{aligned}
\objectiveFunction(\Omega,u) = c_1 |\Omega| + c_2
\int_D \tilde\alpha_\Omega |u-\targetU|^2 \dd x 
\end{aligned}
\end{equation}
where $\targetU \in H^1(D)$ is a given desired state, and $c_1$, $c_2 \in \R$ are given constants.
\black
The continuous topology optimization problem reads
\begin{subequations}\label{eq::ContinuousProblem}
\begin{alignat}{2}
\min_{\Omega \in \mathcal A} \; \objectiveFunction(\Omega,u), & \\
\text{ subject to}& \nonumber \\	
 - \lambda_\Omega \Delta u + \alpha_\Omega u&= f_\Omega  &&\text{in } D, \\
u &= g_D \quad &&\text{on } \Gamma_D, \\
\lambda_\Omega \partial_n u &= g_N  &&\text{on } \Gamma_N,
\end{alignat}
\end{subequations}
where 
\begin{align*}
\tilde\alpha_\Omega(x) =& \chi_\Omega(x)\tilde\alpha_1 + \chi_{D \setminus \Omega}(x) \tilde\alpha_2, \qquad
\lambda_\Omega(x) = \chi_\Omega(x)\lambda_1 + \chi_{D \setminus \Omega}(x) \lambda_2,\\
\alpha_\Omega(x) =& \chi_\Omega(x)\alpha_1 + \chi_{D \setminus \Omega}(x) \alpha_2, \qquad
f_\Omega(x) = \chi_\Omega(x)f_1 + \chi_{D \setminus \Omega}(x) f_2, 
\end{align*}
for some constants $\lambda_1, \lambda_2 > 0$, $\alpha_1, \alpha_2, \tilde \alpha_1, \tilde \alpha_2 \geq 0$ and $f_1, f_2 \in \mathbb R$ with $\chi_S$ the characteristic function of a set $S$,
\begin{align*}
    \chi_S(x) = \begin{cases}
                    1, & x \in S, \\
                    0, & \mbox{ else}.
                \end{cases}
\end{align*}
Here, $\mathcal A$ denotes a set of admissible subsets of $D$, and the data $g_D \in H^{1/2}(\Gamma_D)$, $g_N \in L^2(\Gamma_N)$ are given. 
The weak formulation of the PDE constraint reads
\begin{align}
    \mbox{Find }u\in V_g:=\{v \in H^1(D): v = g_D \mbox{ on } \Gamma_D \} \mbox{ such that } \nonumber \\
    \int_D \lambda_\Omega \nabla u \cdot \nabla v + \alpha_\Omega u v \; \mbox dx = \int_D f_\Omega v \; \mbox dx \quad  \mbox{ for all }v \in V_0 \label{eq_weakForm}
\end{align}
with $V_0 = \{v \in H^1(D): v = 0 \mbox{ on } \Gamma_D \}$. We assume that either $|\Gamma_D|>0$ or $\alpha_1, \alpha_2>0$ such that, for given $\Omega\in \mathcal A$, \eqref{eq_weakForm} admits a unique solution which we denote by $u(\Omega)$. We introduce the reduced cost function $\redObjectiveFunction(\Omega):= \objectiveFunction(\Omega, u(\Omega))$.

\subsection{Classical topological derivative}\label{sec::contiTopDerivative}
Let $\omega \subset \mathbb R^d$ with $0 \in \omega$.
For a point $z \in \Omega \cup (D \setminus \overline \Omega)$, let $\omega_\varepsilon := z + \varepsilon \omega$ denote a perturbation of the domain around $z$ of (small enough) size $\varepsilon$ and of shape $\omega$.
The continuous topological derivative of the shape function $\redObjectiveFunction=\redObjectiveFunction(\Omega)$ is defined by 
\begin{equation} \label{eq_defTD}
d\redObjectiveFunction(\Omega)(z) = \begin{cases}
\lim_{\varepsilon\searrow 0}\frac{\redObjectiveFunction(\Omega\cup\omega_\varepsilon)-\redObjectiveFunction(\Omega)}{|\omega_\varepsilon|} \quad \text{for } z \in D \setminus \overline \Omega, \\ 
\lim_{\varepsilon\searrow 0}\frac{\redObjectiveFunction(\Omega\setminus\bar\omega_\varepsilon)-\redObjectiveFunction(\Omega)}{|\omega_\varepsilon|} \quad \text{for } z \in \Omega.
\end{cases}
\end{equation}
Note that the topological derivative is not defined for points $z \in \partial \Omega$ on the material interface.
For problem \eqref{eq::ContinuousProblem} we obtain for $z \in D \setminus \overline \Omega$
\begin{align}
    \begin{aligned}
d\redObjectiveFunction_2(\Omega)(z) =&c_1+c_2(\tilde \alpha_1 - \tilde \alpha_2)(u(z)-\targetU(z))^2 \\
&+  2\lambda_2\frac{\lambda_1-\lambda_2}{\lambda_1+\lambda_2}\nabla u(z) \cdot \nabla p(z) +(\alpha_1-\alpha_2)u(z)p(z)- ( f_1 - f_2)p(z), 
    \end{aligned}
\end{align} 
whereas for  $z \in \Omega$
\begin{align}
\begin{aligned}
d\redObjectiveFunction_2(\Omega)(z) =&-c_1+c_2 (\tilde \alpha_2 - \tilde \alpha_1)(u(z)-\targetU(z))^2 \\
&+ 2\lambda_1\frac{\lambda_2-\lambda_1}{\lambda_1+\lambda_2}\nabla u(z) \cdot \nabla p(z) +(\alpha_2-\alpha_1)u(z)p(z)- ( f_2 - f_1)p(z),
\end{aligned}
\end{align}
see, e.g. \cite{Amstutz_2006aa}.

\subsection{Classical shape derivative}\label{sec::contiDerivative}
We recall the definition of the classical shape derivative as well as its formula for our model problem \eqref{eq::ContinuousObjective}--\eqref{eq::ContinuousProblem}. Given an admissible shape $\Omega \in \mathcal A$ and a smooth vector field $V \in C_c^\infty(D)$ that is compactly supported in $D$, we define the perturbed domain
\begin{equation}\label{eq::pertDomain}
\Omega_t = (\text{id}+tV)(\Omega),
\end{equation}
for a small perturbation parameter $t > 0$ where $\text{id}:\mathbb R^d \rightarrow \mathbb R^d$ denotes the identity operator. The classical shape derivative of $\redObjectiveFunction$ at $\Omega$ with respect to $V$ is then given by
\begin{equation}\label{eq::shapeDerivative}
d\redObjectiveFunction(\Omega)(V) = \lim_{t\searrow 0} \frac{\redObjectiveFunction(\Omega_t)-\redObjectiveFunction(\Omega)}{t}
\end{equation}
if this limit exists and the mapping $V \mapsto d \redObjectiveFunction(\Omega)(V)$ is linear and continuous.
Under suitable assumptions it can be shown that this shape derivative admits the tensor representation
\begin{equation}\label{eq::shapeDerivative_tensor}
d\redObjectiveFunction(\Omega)(V) 
=\int_D \mathcal S_1^\Omega : \partial V + \mathcal S_0^\Omega \cdot V dx,
\end{equation}
for some tensors $\mathcal S_0^\Omega \in L^1(D, \mathbb R^{d} )$, $\mathcal S_1^\Omega \in L^1(D, \mathbb R^{d\times d} )$ \cite{LaurainSturm2016}. Here, $\partial V$ denotes the Jacobian of the vector field $V$. The structure theorem of Hadamard-Zol\'esio \cite[pp. 480-481]{DZ2} states that under certain smoothness assumptions the shape derivative of a shape function $\redObjectiveFunction$ with respect to a vector field $V$ can always be written as an integral over the boundary of a scalar function $L$ times the normal component of $V$, i.e.,
\begin{align}\label{eq::shapeDerivative_boundary}
d\redObjectiveFunction(\Omega)(V) = \int_{\partial \Omega} L \,(V \cdot n) \; \mbox ds
\end{align}
where $n$ denotes the unit normal vector pointing out of $\Omega$. For problem \eqref{eq::ContinuousProblem} one obtains \cite{LaurainSturm2016}
\begin{align} \label{eq_S1}
\mathcal S_1^\Omega =& (c_1 \chi_\Omega + c_2 \tilde \alpha_\Omega (u-\targetU)^2 + \lambda_\Omega \nabla u \cdot \nabla p + \alpha_\Omega u p - f_\Omega p) I - \lambda_\Omega \nabla u \otimes \nabla p - \lambda_\Omega \nabla p \otimes \nabla u, \\
\mathcal S_0^\Omega =& -2 \tilde \alpha_\Omega (u-\targetU) \nabla \targetU \label{eq_S0}
\end{align}
where $I \in \mathbb R^{d,d}$ denotes the identity matrix, and
\begin{align*}
L =& [(\mathcal S_1^{\Omega,\text{in}} - \mathcal S_1^{\Omega,\text{out}}) n ]\cdot n \\
=& c_1 + c_2 (\tilde \alpha_1- \tilde \alpha_2) (u-\targetU)^2 + (\alpha_1-\alpha_2) u p - (f_1 - f_2) p  \\
&+ (\lambda_1- \lambda_2)  (\nabla u \cdot \tau)(\nabla p \cdot\tau) -  \left(\frac{1}{\lambda_1} - \frac{1}{\lambda_2}\right)( \lambda_\Omega \nabla u \cdot n)(  \lambda_\Omega \nabla p \cdot n).
\end{align*}
Here, $\mathcal S_1^{\Omega,\text{in}}$ and $\mathcal S_1^{\Omega,\text{out}}$ denote the restrictions of the tensor $\mathcal S_1^{\Omega}$ to $\Omega$ and $D \setminus \Omega$, respectively. Furthermore, for two column vectors $a, b \in \mathbb R^d$, $a \otimes b = a b^\top \in \mathbb R^{d \times d}$ denotes their outer product, $\tau$ denotes the tangential vector and $p \in H^1_0(D)$ is the solution to the adjoint equation
\begin{align*}
    \int_D \lambda_\Omega \nabla v \cdot \nabla p + \alpha_\Omega v p \; \mbox dx = - 2c_2 \int_D \tilde \alpha_\Omega (u - \hat u)v \; \mbox dx \quad \mbox{for all } v \in H^1_0(D).
\end{align*}

Moreover, motivated by the definition of the topological derivative \eqref{eq_defTD} with the volume of the difference of the perturbed and unperturbed domains in the denominator, we introduce the alternative definition of a shape derivative
\begin{align} \label{eq::shapeDerivativeAlternative}
\hat d \redObjectiveFunction(\Omega)(V) = \underset{t \searrow 0}{\mbox{lim }} \frac{\redObjectiveFunction(\Omega_t)-\redObjectiveFunction(\Omega)}{|\Omega_t \symmDiff \Omega|},
\end{align}
with the symmetric difference of two sets $A \symmDiff B := (A \setminus \overline B) \cup (B \setminus \overline A)$. Note that the volume of the symmetric difference in \eqref{eq::shapeDerivativeAlternative} can be written as
\begin{equation}\label{eq::diffSets}
|\Omega_t \symmDiff \Omega| = \int_D |\chi_{\Omega_t} - \chi_{\Omega}| \ddx.
\end{equation}

\begin{lemma} \label{LEM_SYMDIF}
    Let $\Omega$ and $V$ smooth. It holds
    \begin{align*}
        \underset{t \searrow 0}{\mbox{lim }}\frac1t \left \lvert \Omega_t \symmDiff \Omega \right \rvert = \int_{\partial \Omega} |V \cdot n| \; \mbox dS_x.
    \end{align*}
\end{lemma}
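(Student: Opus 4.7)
My plan is to localize the computation to a tubular neighborhood of $\partial\Omega$ and then exploit the fact that, to first order in $t$, the transported boundary $\partial\Omega_t$ is the graph of the normal displacement $tV\cdot n$ over $\partial\Omega$. Since $\Omega_t \symmDiff \Omega$ is contained in the $t\|V\|_\infty$-neighborhood of $\partial\Omega$, I would first fix a tubular neighborhood $U_\delta := \{x \in D : \mbox{dist}(x,\partial\Omega) < \delta\}$ small enough for the tubular coordinate map $\psi(s,r) := s + r\,n(s)$ to be a diffeomorphism from $\partial\Omega \times (-\delta,\delta)$ onto $U_\delta$, and restrict $t$ so that $\Omega_t \symmDiff \Omega \subset U_\delta$.

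In these coordinates the Jacobian of $\psi$ equals $J(s,r) = 1 + O(r)$ uniformly in $s$, and $\chi_\Omega \circ \psi = \chi_{\{r<0\}}$. Applying the implicit function theorem to the defining equation of $T_t(\partial\Omega)$ in the $(s,r)$ chart, I obtain a smooth function $\varphi_t$ on $\partial\Omega$ with $\varphi_0 \equiv 0$ and the uniform first-order expansion $\varphi_t(s) = t\,V(s)\cdot n(s) + O(t^2)$, such that $\chi_{\Omega_t} \circ \psi = \chi_{\{r < \varphi_t(s)\}}$ throughout $U_\delta$. Intuitively, a boundary point $s$ is mapped by $T_t$ to $s + tV(s)$, whose signed normal coordinate with respect to $\partial\Omega$ is precisely $tV(s)\cdot n(s)$ up to curvature corrections of order $t^2$, while the tangential component of $V$ only reparametrizes $\partial\Omega_t$ along $\partial\Omega$.

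Pulling \eqref{eq::diffSets} through the change of variables $\psi$ and using these two identifications,
\begin{align*}
|\Omega_t \symmDiff \Omega| = \int_{\partial\Omega}\!\!\int_{-\delta}^{\delta} \bigl|\chi_{\{r<0\}}(r) - \chi_{\{r<\varphi_t(s)\}}(r)\bigr|\,J(s,r) \dd r \dd S_x(s) = \int_{\partial\Omega} |\varphi_t(s)|\bigl(1 + O(t)\bigr) \dd S_x(s),
\end{align*}
since the inner integrand is supported on the interval of length $|\varphi_t(s)| = O(t)$ between $0$ and $\varphi_t(s)$, on which $J = 1 + O(t)$. Dividing by $t$, using $\varphi_t(s)/t \to V(s)\cdot n(s)$ uniformly on $\partial\Omega$, and applying dominated convergence produces the stated limit.

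The main obstacle is the uniform first-order expansion $\varphi_t(s) = tV(s)\cdot n(s) + O(t^2)$, which relies on the smoothness hypotheses on $\Omega$ and $V$ to apply the implicit function theorem with constants that are uniform in $s \in \partial\Omega$ and $t$ in a small interval around $0$. Points where $V\cdot n$ vanishes pose no difficulty because they contribute vanishing fiber measure, and the absolute value in the integrand is what guarantees that cancellations between outward- and inward-moving parts of $\partial\Omega$ do not occur, distinguishing the symmetric-difference limit from the signed volume derivative.
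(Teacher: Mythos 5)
Your argument is correct, but it follows a genuinely different route from the paper. The paper works purely in the two-dimensional setting: it parametrizes $\partial \Omega$ by a curve $c(s)$, sets $c_t(s)=c(s)+tV(c(s))$, tracks the intersection points of $\partial\Omega$ and $\partial\Omega_t$ through the parameter pairs $(s_i(t),\hat s_i(t))$, decomposes $\Omega_t \symmDiff \Omega$ into the lens-shaped regions $A_i(t)$ between consecutive intersections, computes each $|A_i(t)|$ by the divergence theorem as explicit curve integrals, and then passes to the limit with a careful expansion of the boundary terms $B_i(t)$, $C_i(t)$. You instead flatten the problem in tubular (normal) coordinates, represent $\partial\Omega_t$ as the graph $r=\varphi_t(s)$ with the uniform expansion $\varphi_t(s)=t\,V(s)\cdot n(s)+O(t^2)$ from the implicit function theorem, and reduce the symmetric difference to a fiberwise integral of $|\varphi_t(s)|$ with Jacobian $1+O(t)$, after which the limit follows from uniform convergence of $\varphi_t/t$ and dominated convergence. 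Your route is dimension-independent and dispenses entirely with the bookkeeping of intersection points — the sign changes of $V\cdot n$, which the paper must handle through the regions $A_i(t)$ and the orientation factors $d_i$, are absorbed automatically by the absolute value in the fiber integral; the price is that the whole weight of the proof rests on the uniform graph representation and the expansion of $\varphi_t$, which you correctly identify as the main obstacle and which should be written out (compactness of $\partial\Omega$ plus smoothness of $V$ give the required uniformity, and smallness of $t$ relative to the tubular radius $\delta$ guarantees both $\Omega_t\symmDiff\Omega\subset U_\delta$ and that $\Omega_t\cap U_\delta$ is exactly the subgraph region). The paper's computation, while longer, is elementary in the sense that it needs neither the tubular neighborhood theorem nor a uniform implicit-function argument, only explicit planar curve integrals.
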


The proof is given in Appendix \ref{app::proofSymDif}. 
From Lemma \ref{LEM_SYMDIF}, we immediately obtain the following relation between $d \redObjectiveFunction(\Omega)(V)$ and $\hat d \redObjectiveFunction(\Omega)(V)$.

\begin{corollary}
Suppose that $\redObjectiveFunction$ is shape differentiable at $\Omega$ and that $\Omega$ and $V$ are smooth and $\int_{\partial \Omega} |V \cdot n| \; dS_x > 0$. Then it holds
\begin{align}
\hat d \redObjectiveFunction(\Omega)(V) = \frac{ d \redObjectiveFunction(\Omega)(V)}{ \int_{\partial \Omega} |V \cdot n| \; dS_x}.
\end{align}
\end{corollary}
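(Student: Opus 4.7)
The plan is to reduce the claim directly to the definitions together with Lemma \ref{LEM_SYMDIF}. Starting from the definition \eqref{eq::shapeDerivativeAlternative} of $\hat d \redObjectiveFunction(\Omega)(V)$, the natural first step is to artificially introduce a factor of $t$ in the numerator and denominator so as to separate the cost-function increment from the geometric increment:
\begin{equation*}
\hat d \redObjectiveFunction(\Omega)(V) = \lim_{t \searrow 0} \frac{\redObjectiveFunction(\Omega_t)-\redObjectiveFunction(\Omega)}{|\Omega_t \symmDiff \Omega|} = \lim_{t\searrow 0} \left(\frac{\redObjectiveFunction(\Omega_t)-\redObjectiveFunction(\Omega)}{t}\cdot \frac{t}{|\Omega_t \symmDiff \Omega|}\right).
\end{equation*}

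Next I would invoke the two ingredients separately. By the shape differentiability assumption together with \eqref{eq::shapeDerivative}, the first factor converges to $d\redObjectiveFunction(\Omega)(V)$ as $t\searrow 0$. By Lemma \ref{LEM_SYMDIF} applied to the smooth $\Omega$ and $V$, the denominator-to-$t$ ratio satisfies
\begin{equation*}
\lim_{t\searrow 0} \frac{|\Omega_t \symmDiff \Omega|}{t} = \int_{\partial \Omega} |V \cdot n| \; \mbox dS_x,
\end{equation*}
and under the hypothesis that this boundary integral is strictly positive, the reciprocal $t/|\Omega_t\symmDiff \Omega|$ is well-defined for all sufficiently small $t>0$ and converges to $1/\int_{\partial \Omega}|V\cdot n|\,\mbox dS_x$.

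At this point the only remaining step is to combine the two limits using the standard product rule for limits, which applies precisely because the limit of the denominator is nonzero. This yields the claimed identity. The main conceptual obstacle (which is already handled by the standing hypothesis) is exactly the non-degeneracy condition $\int_{\partial\Omega}|V\cdot n|\,\mbox dS_x>0$: without it the denominator would vanish at first order and $\hat d\redObjectiveFunction(\Omega)(V)$ could fail to exist as a finite number even when $d\redObjectiveFunction(\Omega)(V)$ does. No further PDE analysis is needed, since everything beyond the definition of shape differentiability is already encoded in Lemma \ref{LEM_SYMDIF}.
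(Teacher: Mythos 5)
Your argument is correct and follows essentially the same route as the paper's proof: both divide numerator and denominator by $t$, identify the limits via the definition \eqref{eq::shapeDerivative} and Lemma \ref{LEM_SYMDIF}, and conclude by the quotient rule for limits, which the positivity of $\int_{\partial \Omega} |V \cdot n| \, dS_x$ justifies. Your remark on why the non-degeneracy hypothesis is needed is a nice addition but does not change the substance.
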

\begin{proof}
	This follows immediately from the definition of $\hat d \redObjectiveFunction(\Omega)(V)$ by Lemma \ref{LEM_SYMDIF} since
	\begin{align}
	\hat d \redObjectiveFunction(\Omega)(V) =  \underset{t \searrow 0}{\mbox{lim }}\frac{\redObjectiveFunction(\Omega_t)-\redObjectiveFunction(\Omega)}{|\Omega_t \symmDiff \Omega|} =  \frac{\underset{t \searrow 0}{\mbox{lim }}\frac{\redObjectiveFunction(\Omega_t)-\redObjectiveFunction(\Omega)}{t}}{\underset{t \searrow 0}{\mbox{lim }}\frac{|\Omega_t \symmDiff \Omega|}{t}} =  \frac{ d \redObjectiveFunction(\Omega)(V)}{\int_{\partial \Omega} |V \cdot n| \; dS_x}. 
	\end{align}	
	
\end{proof}
\begin{remark}The condition $\int_{\Omega} \mbox{div}(V) \; dx \neq 0$ is a sufficient condition for $\int_{\partial \Omega} |V \cdot n| \; dS_x > 0$, since
\begin{equation}
0 < \left|\int_{\Omega} \mbox{div}(V) \; dx \right| = \left|\int_{\partial \Omega} V \cdot n \; dS_x \right|< \int_{\partial \Omega} |V \cdot n| \; dS_x.
\end{equation}	
\end{remark}

\subsection{The continuous \topshapeDerivative derivative}
Here and in the following, we assume that the domain $\Omega$ is described by a level-set function $\levelset: D \rightarrow \R$ via
\begin{subequations}
\begin{align}
	\levelset(\pos) < 0 &\Longleftrightarrow \pos \in \Omega, \label{eq::levelSet1} \\
	\levelset(\pos) > 0 &\Longleftrightarrow \pos \in D \setminus \overline{\Omega} \label{eq::levelSet2}\\
	\levelset(\pos) = 0 &\Longleftrightarrow \pos \in \partial \Omega \cap D. \label{eq::levelSet3}
\end{align}
\end{subequations}
For given $\levelset$, let $\Omega(\levelset)$ denote the unique domain defined by \eqref{eq::levelSet1}--\eqref{eq::levelSet3}.
In this section, in contrast to the setting in Section \ref{sec::contiDerivative}, we perturb $\Omega$ indirectly by perturbing $\levelset$ such that
$\levelset_\varepsilon = \operator_\varepsilon \levelset$
for some operator $\operator_\varepsilon: C^0(D)\rightarrow C^0(D)$ depending on $\varepsilon \geq 0$ with the property $\Omega(\operator_0 \levelset) = \Omega(\levelset)$.
Later on, in the discrete setting, we will distinguish between two different types of perturbation operators $\operator_\eps$ corresponding to shape or topological perturbations of~$\Omega$.

Let, from now on, $\redPhiObjectiveFunction(\levelset):= \redObjectiveFunction(\Omega(\levelset))$ denote the reduced cost function as a function of the level set function $\phi$.
 This way, a continuous \topshapeDerivative derivative can be defined as 
\begin{equation}\label{eq::topologicalShapeDerivative}
	d\redPhiObjectiveFunction(\levelset) = \lim_{\varepsilon\searrow 0}\frac{\redPhiObjectiveFunction(\levelset_\varepsilon)-\redPhiObjectiveFunction(\levelset)}{|\Omega(\levelset_\varepsilon) \symmDiff \Omega(\levelset)|}.
\end{equation}
Note that this sensitivity depends on the choice of the perturbation operator $\operator_\varepsilon$, which can represent either a shape perturbation or a topological perturbation.
We will mostly be concerned with its discrete counterpart, which will be introduced in Section \ref{sec::numTopShapeDer}.
Note that, in the case of shape perturbations, due to the scaling $|\Omega(\levelset_\varepsilon)\symmDiff \Omega(\levelset)|$ instead of $\varepsilon$ in the denominator the shape derivative is modified and does not correspond to \eqref{eq::shapeDerivative} but rather to \eqref{eq::shapeDerivativeAlternative}.

\paragraph{Relation to literature}

\newcommand{\Gateaux}{G\^{a}teaux }
The sensitivity of shape functions with respect to perturbations of a level set function (representing a shape) was investigated in \cite{laurain2018analyzing} for the case without PDE constraints. There, the author considers smooth level set functions and rigorously computes the \Gateaux (semi-)derivative in the direction of a smooth perturbation of the level set function, both for the case of shape and topological perturbations. In the case of shape perturbations, it is shown that the \Gateaux derivative coincides with the shape derivative \eqref{eq::shapeDerivative} with respect to a suitably chosen vector field. On the other hand, a resemblance between the notions of \Gateaux derivative and topological derivative is shown, yet the \Gateaux derivative may vanish or not exist in cases where the topological derivative is finite.
Evidently, this discrepancy results from the fact that the denominator in the definition of the \Gateaux derivative is always of order one whereas it is of the order of the space dimension in the topological derivative.

While the analysis for shape and topological perturbations is carried out separately in \cite{laurain2018analyzing}, a more unified approach is followed in \cite{delfour2018topological,Delfour_engcomp_2022}. In these publications, the idea is to consider sensitivities with respect to domain perturbations that are obtained by the dilation of lower-dimensional objects. Here, given a set $E \subset \mathbb R^d$ of dimension $k \leq d$, the dilated set of radius $r$ is given by $E_r = \{x \in \mathbb R^d: d_E(x) \leq r\}$ where $d_E(x)$ denotes a distance of a point $x$ to a set $E$. For instance, when $E$ is chosen as a single point, the dilated set is just a ball of radius $r$ around that point and performing a sensitivity analysis with respect to the volume of the dilated object leads to the topological derivative. On the other hand, when $E$ is chosen as the boundary of a domain, $E_r$ can be defined using a signed distance function and corresponds to a uniform expansion of the domain. Then, a similar procedure leads to the shape derivative with respect to a uniform expansion in normal direction  (i.e. $V = n$ in \eqref{eq::pertDomain}--\eqref{eq::shapeDerivative}).
In \cite{delfour2018topological}, a sensitivity analysis for various choices of $E$ is carried out with respect to the volume of the perturbation. We note, however, that arbitrary shape perturbations are not covered and would require an extension of the theory. Comparing \cite{laurain2018analyzing} and \cite{delfour2018topological}, we observe that in the former paper only smooth perturbations of a level set function are admissible whereas, in the latter approach, domain perturbations by dilations can be interpreted as perturbations of level set functions by a (non-smooth) distance function.

Finally, we mention \cite{bernland2018acoustic} where a domain is represented by a discretized level set function and a shape sensitivity analysis is carried out with respect to a perturbation of the level set values close to the boundary. This procedure can be interpreted as an application of the idea of dilation to discretized shape optimization problems.
As the authors point out, this kind of shape sensitivity analysis is more natural compared to the standard approach based on domain transformations when employed in a level-set framework; an observation also made in \cite[Sec. 3]{laurain2018analyzing}. The authors show numerical results for the shape optimization of an acoustic horn, but do not consider topological perturbations in this work.

\medskip

As it can be seen from \eqref{eq::topologicalShapeDerivative}, our approach is related to the dilation concept since we also consider the sensitivity with respect to the volume of the domain perturbation $\Omega_t \symmDiff \Omega$. In the following, we will investigate the \topshapeDerivative derivative in a discretized setting. Similarly to \cite{bernland2018acoustic}, we will consider shape sensitivity analysis with respect to level set values on mesh nodes close to the boundary. Moreover, we will also be able to deal with topological perturbations and treat shape and topological updates in a unified way by a discretized version of \eqref{eq::topologicalShapeDerivative}, called the numerical \topshapeDerivative derivative.
%
%
%

%
%


\section{Numerical setting}\label{sec::discretization}
In this section we consider the discretization of \eqref{eq::ContinuousProblem}. 
Let $\mesh$ be a given finite element mesh covering $D$ with $M$ nodes $\{\pos_k\}_{k=1}^{M}$ and $N$ triangular elements $\{\tau_l\}_{l=1}^N$. We introduce the index set $\setIk$ of all element indices of elements $\tau_l$ where $\pos_k$ is a node of $\tau_l$,
\begin{equation}
\setIk := \{l \in {\{1, \dots, N \} } : \pos_k \in \bar\tau_l\} \quad \text{for } k = 1,\dots ,M. 
\end{equation}
Moreover,
\begin{equation}
J_{\tau_l} := \{k \in {\{1, \dots, M \} } : \pos_k \in \bar \tau_l\} \quad \text{for } l = 1,\dots ,N
\end{equation}
is the index set of all node indices of nodes $\pos_k$ in $\bar \tau_l$. Furthermore, we introduce the one-ring of a node $\pos_k$,
\begin{equation}
	R_{\pos_k} := \{i \in {\{1, \dots, M \} } |\exists l \in \setIk: \pos_i \in \bar \tau_l\} \quad \text{for } k = 1,\dots ,M.
\end{equation}
These sets are illustrated in Figure \ref{fig::FEMsets}.
\begin{figure}
	\begin{subfigure}[t]{0.26\textwidth}
		\includegraphics{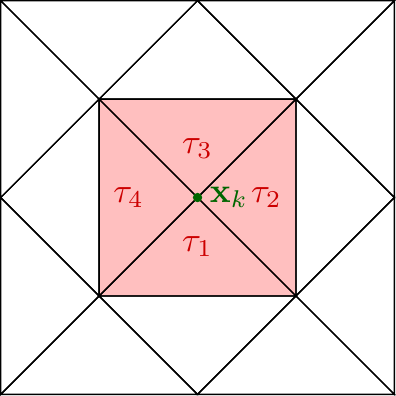}
		\subcaption{$\setIk = \{1,~2,~3,~4\}$}
	\end{subfigure}\hfill
	\begin{subfigure}[t]{0.26\textwidth}
		\includegraphics{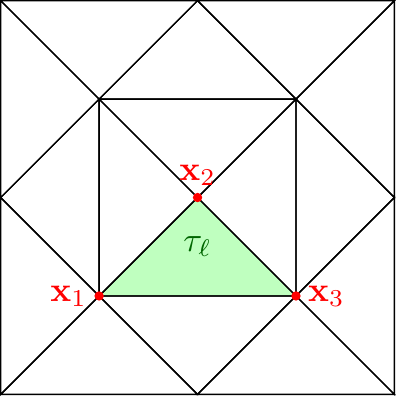}
		\subcaption{$J_{\tau_l} = \{1,~2,~3\}$}	
	\end{subfigure}\hfill
	\begin{subfigure}[t]{0.26\textwidth}
		\includegraphics{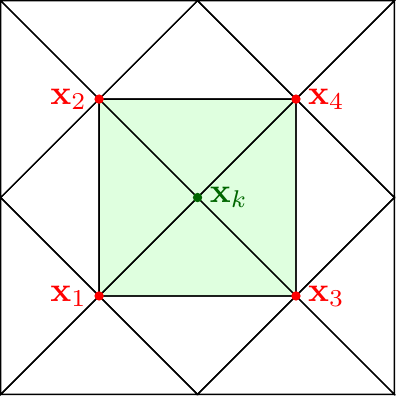}
		\subcaption{$R_{\pos_k} = \{k,~1,~2,~3,~4\}$}	
	\end{subfigure}
	\caption{Illustration of the sets $\setIk$, $J_{\tau_l}$, and $R_{\pos_k}$.}
	\label{fig::FEMsets}
\end{figure}
Let $P_1 =\{a+bx_1 + cx_2: a,b ,c \in \mathbb R\}$ denote the space of affine linear polynomials in two space dimensions and $S_h^1(D)$ the space of piecewise affine linear and globally continuous functions on $\mesh$,
\begin{align*}
S_h^1(D) = \{ v \in H^1(D): v|_{T} \in P_1 \mbox{ for all } T \in \mesh \} = \mbox{span} \{ \varphi_1, \dots, \varphi_M \}
\end{align*}
with the hat basis functions $\varphi_i \in S_h^1(D)$ which satisfy $\varphi_i(\pos_j) = \delta_{ij}$, $i,j = 1, \dots, M$.
The discretization of problem \eqref{eq::ContinuousProblem} leads to the discretized optimization problem
\begin{subequations} \label{eq_discr_opti_problem}
\begin{align}
\underset{\Omega}{\mbox{min }} &c_1|\Omega| + c_2 (\mathbf u - \hat{\mathbf u})^\top \tilde{\mathbf M}(\mathbf u - \hat{\mathbf u}) \\
&\mbox{ subject to }
\mathbf A \mathbf u = \mathbf f, \label{eq::linearSystem}
\end{align}
\end{subequations}
with the solution vector $\mathbf u \in \R^{M}$ and $\mathbf A = \mathbf M + \mathbf K$. Here, the mass matrices $\mathbf M , \tilde{\mathbf M} \in \R^{M\times M}$, the stiffness matrix $\mathbf K \in \R^{M\times M}$, and the right-hand-side vector $\mathbf f \in \R^{M}$ depend on the shape $\Omega$ and are given by
\begin{equation}\label{eq_defMtilde}
    \begin{aligned}
\mathbf M[i,j] &= \int_{D} \alpha_{\Omega} \varphi_j  \varphi_i \ddx, &&&
\mathbf K[i,j] &= \int_{D} \lambda_{\Omega} \nabla \varphi_j \cdot \nabla \varphi_i \ddx,  \\
\mathbf f[i] &= \int_{D}  f_{\Omega}\varphi_i \,   \ddx, &&&
\tilde {\mathbf M}[i,j] &= \int_{D} \tilde \alpha_{\Omega}\varphi_j  \varphi_i \ddx.
    \end{aligned}
\end{equation}
On the reference element $\tau_R = \{\xi\in\R^2:0\le\xi_1\le 1,0\le \xi_2\le1-\xi_1\}$ we have the local form functions
\begin{align*}
\psi_1(\xi_1, \xi_2) &= 1 -\xi_1 -\xi_2, &&&
\psi_2(\xi_1, \xi_2) &= \xi_1, &&&
\psi_3(\xi_1, \xi_2) &= \xi_2.
\end{align*}
For an element $\tau_l \in \mesh$, we denote the global vertex indices of its three vertices by $l_1$, $l_2$, $l_3$ and assume them to be numbered in counter-clockwise orientation.
Then, the respective local finite element matrices and the local right-hand-side vector for element $\tau_l$ are given by   
\begin{align*}
	\mathbf m_l[i,j] &= \detJl \int_{\xi_1=0}^1\int_{\xi_2=0}^{1-\xi_1} {(\alpha_{\Omega} \circ \Phi_l)} \psi_j \psi_i \dd\xi_2 \dd\xi_1, \\
	\mathbf k_l[i,j] &= \detJl \left(J_l^{-1}\nabla_\xi \psi_j\right)^\top \left(J_l^{-1}\nabla_\xi \psi_i\right) \int_{\xi_1=0}^1\int_{\xi_2=0}^{1-\xi_1} {(\lambda_{\Omega} \circ \Phi_l)}  \dd\xi_2 \dd\xi_1, \\
	\mathbf f_l[i] &= \detJl \int_{\xi_1=0}^1\int_{\xi_2=0}^{1-\xi_1}  {(f_{\Omega} \circ \Phi_l)} \, \psi_i    \dd\xi_2 \dd\xi_1, \\
\end{align*}
for $i, j \in \{1,2,3\}$, where $\nabla_\xi \psi_j = [\partial_{\xi_1}\psi_j , \partial_{\xi_2}\psi_j]^\top$ and the mapping $\Phi_l$ between $\tau_R$ and $\tau_l$ and its Jacobian $J_l$ are given by 
\begin{align*}
\Phi_l(\xi_1, \xi_2) = \pos_{l_1} + J_l \begin{bmatrix}\xi_1 \\ \xi_2\end{bmatrix}, \qquad J_l = \begin{bmatrix}
\pos_{l_2}-\pos_{l_1} & \pos_{l_3}-\pos_{l_1}
\end{bmatrix} \in \mathbb R^{2 \times 2}.
\end{align*}

\newcommand{\uu}{\mathbf u}
\newcommand{\pp}{\mathbf p}
\newcommand{\qq}{\mathbf q}
\newcommand{\yy}{\mathbf y}
\newcommand{\vv}{\mathbf v}
\newcommand{\ww}{\mathbf w}
\newcommand{\KK}{\mathbf K}
\newcommand{\MM}{\mathbf M}
\newcommand{\Amat}{\mathbf A}
\newcommand{\ff}{\mathbf f}
\section{Numerical \topshapeDerivative derivative} \label{sec::numTopShapeDer}
Given the discretization introduced in Section \ref{sec::discretization}, in contrast to the continuous \topshapeDerivative derivative, the numerical \topshapeDerivative derivative is only defined at the nodes of the finite element mesh $\mesh$. For a given piecewise linear level set function $\levelset \in S_h^1(D)$ let $\Omega(\levelset)$ be defined by \eqref{eq::levelSet1}--\eqref{eq::levelSet3} and $\redPhiObjectiveFunction(\levelset) = \objectiveFunction(\Omega(\levelset),u(\levelset))$, where $u(\levelset) \in {V_h} = \{v \in S_h^1(D): v = g_D \mbox{ on } \Gamma_D \}$ is the finite element function corresponding to the solution of \eqref{eq::linearSystem}. Note that, in this section, $\Omega(\phi)$ is polygonal since $\phi \in S_h^1(D)$. The \topshapeDerivative derivative at node $\pos_k \in \mesh$ is defined by
\begin{equation}\label{eq::topDerivative}
d\redPhiObjectiveFunction(\levelset)(\pos_k) = \begin{cases}
\lim_{\varepsilon\searrow 0}
\frac{\redPhiObjectiveFunction({\Tplus} \levelset)-\redPhiObjectiveFunction(\levelset)}
{|\Omega({\Tplus}\levelset) \symmDiff  \Omega(\levelset)|} \quad \text{for } \pos_k \in \setTminus(\phi), \\[10pt]
\lim_{\varepsilon\searrow 0}
\frac{\redPhiObjectiveFunction({\Tminus} \levelset)-\redPhiObjectiveFunction( \levelset)}{|\Omega({\Tminus} \levelset) \symmDiff  \Omega(\levelset)|}
\quad \text{for } \pos_k \in \setTplus(\phi), \\[10pt]
\lim_{\varepsilon\searrow 0}\frac{\redPhiObjectiveFunction(S_{k,\varepsilon} \levelset)-\redPhiObjectiveFunction(\levelset)}{|\Omega(S_{k,\varepsilon} \levelset) \symmDiff \Omega( \levelset)|} \quad \text{for } \pos_k \in \setS(\phi).
\end{cases}
\end{equation}
Here, given $\phi \in S_h^1(D)$, the respective sets are defined by
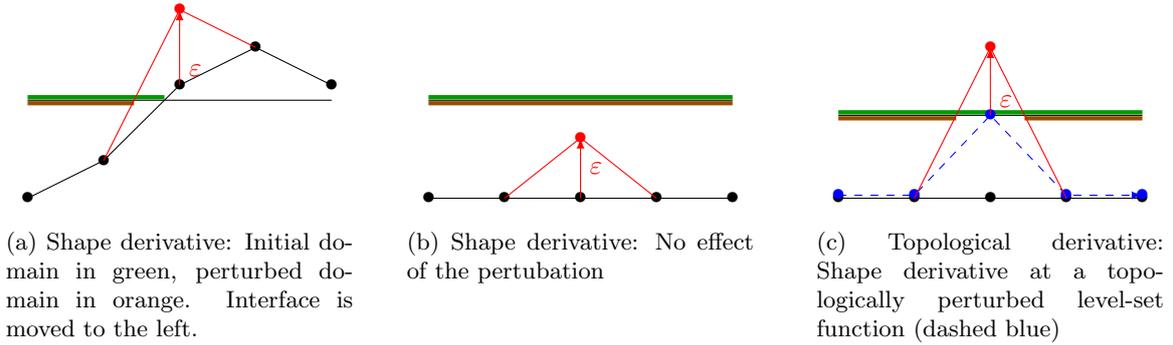
\begin{figure}
\begin{subfigure}[t]{0.3\textwidth}
	\centering
	\begin{tikzpicture}[>=latex]
	\def\z{-1.3}
	\def\dz{0.04}
	\draw (0,0) -- (4,0);
	\draw[ultra thick,green!60!black] (0,\dz) -- (1.8,\dz);
	\draw[ultra thick,orange!60!black] (0,-\dz) -- (1.4,-\dz);
	\draw (0,\z) node{$\bullet$}--(1,\z+0.5)node{$\bullet$} -- (2,\z+1.5)node{$\bullet$}-- (3,\z+2)node{$\bullet$}-- (4,\z+1.5)node{$\bullet$};
	\draw[red] (1,\z+0.5) -- (2,\z+2.5)node{$\bullet$}-- (3,\z+2);
	\draw[red,->] (2,\z+1.5)--(2,\z+2.5) node[pos=0.2,right]{$\varepsilon$};
	\end{tikzpicture}
		\caption{Shape derivative: Initial domain in green, perturbed domain in orange. Interface is moved to the left.}	
\end{subfigure}\hfill
\begin{subfigure}[t]{0.3\textwidth}
	\centering
\begin{tikzpicture}[>=latex]
\def\z{-1.3}
\def\dz{0.04}
\draw (0,0) -- (4,0);
\draw[ultra thick,green!60!black] (0,\dz) -- (4,\dz);
\draw[ultra thick,orange!60!black] (0,-\dz) -- (4,-\dz);
\draw (0,\z) node{$\bullet$}--(1,\z)node{$\bullet$} -- (2,\z)node{$\bullet$}-- (3,\z)node{$\bullet$}-- (4,\z)node{$\bullet$};
\draw[red] (1,\z) -- (2,\z+0.8)node{$\bullet$}-- (3,\z);
\draw[red,->] (2,\z)--(2,\z+0.8) node[pos=0.5,right]{$\varepsilon$};
\end{tikzpicture}
\subcaption{Shape derivative: No effect of the pertubation}	
\end{subfigure}
\hfill
\begin{subfigure}[t]{0.3\textwidth}
	\centering
\begin{tikzpicture}[>=latex]
\def\z{-1.1}
\def\dz{0.04}
\draw (0,0) -- (4,0);
\draw[ultra thick,green!60!black] (0,\dz) -- (4,\dz);
\draw[ultra thick,orange!60!black] (0,-\dz) -- (1.55,-\dz);
\draw[ultra thick,orange!60!black] (2.45,-\dz) -- (4,-\dz);
\draw (0,\z) node{$\bullet$}--(1,\z)node{$\bullet$} -- (2,\z)node{$\bullet$}-- (3,\z)node{$\bullet$}-- (4,\z)node{$\bullet$};
\draw[blue,->,dashed] (0,\z+0.04) node{$\bullet$}--(1,\z+0.04)node{$\bullet$} -- (2,0)node{$\bullet$}-- (3,\z+0.04)node{$\bullet$} -- (4,\z+0.04)node{$\bullet$};
\draw[red] (1,\z) -- (2,\z+2)node{$\bullet$}-- (3,\z);
\draw[red,->] (2,0)--(2,0.9) node[pos=0.2,right]{$\varepsilon$};
\end{tikzpicture}
\subcaption{Topological derivative: Shape derivative at a topologically perturbed level-set function (dashed blue)}
\end{subfigure}
\caption{Illustration of the shape derivative and the topological derivative. }
\label{fig:1}
\end{figure}

\begin{subequations}
	\begin{align}
	\setTminus(\phi) =& \{\pos_k \in \mesh \, | \, \forall i \in \setRk:\levelset(\pos_i) \le 0 \}, \\
	\setTplus(\phi) =& \{\pos_k \in \mesh \, |\, \forall i \in \setRk:\levelset(\pos_i) \ge 0 \}, \\
	\setS(\phi) =& \mesh \setminus (\setTminus(\phi) \cup\setTplus(\phi)).
	\end{align}
\end{subequations}
Whenever, the level set function $\phi$ is clear from the context, we will drop the argument and write $\setTminus, \setTplus, \setS$ for brevity.
Furthermore, $\Tplus: S_h^1(D) \rightarrow S_h^1(D)$ is the positive discrete topological perturbation operator defined by its action
\begin{align}
\Tplus\levelset(\pos) = \levelset(\pos) -( \levelset(\pos_k) - \eps) \varphi_k,
\end{align}
see Figure \ref{fig:1}(c), whereas the negative discrete topological perturbation operator $\Tminus: S_h^1(D) \rightarrow S_h^1(D)$ is defined by  
\begin{align}
\Tminus\levelset(\pos) = \levelset(\pos) -( \levelset(\pos_k) + \eps) \varphi_k.
\end{align}

Finally, the discrete shape perturbation operator $S_{k,\varepsilon}: S_h^1(D) \rightarrow S_h^1(D)$ is defined by 
\begin{align} \label{eq_def_phiEps}
\S\levelset(\pos) := 
\levelset(\pos) + \eps \varphi_k(\pos).            
\end{align}
\begin{remark}
	Note that the discrete perturbation operators defined above only change the nodal value of the finite element function $\levelset \in S_h^1(D)$ at one node $\pos_k$, \eg for $\S$ it holds $\S\levelset(\pos_j) = \levelset(\pos_j)$ for all $j \in \mesh \setminus \{k\}$ and $\S\levelset(\pos_k) = \levelset(\pos_k) + \eps$.
\end{remark}

%
\subsection{Computation of the numerical \topshapeDerivative derivative for the area cost functional}
Before we compute the numerical \topshapeDerivative derivative \eqref{eq::topDerivative} for the full model problem \eqref{eq::ContinuousProblem}, we consider the case of the pure volume cost function and neglect the PDE constraint, i.e., we set $c_1 = 1, c_2 =0$ in \eqref{eq::ContinuousObjective}.

For that purpose, we investigate
\begin{equation}\label{eq::areaChange}
\changeArea  = |\Omega(\discreteOperator\levelset)|-|\Omega(\levelset)|,
\end{equation}
for $\discreteOperator \in \{ \Tplus,\,\Tminus,\,\S \}$. 
We note that for the computation of \eqref{eq::areaChange} only those "cut elements" are relevant which have a node $\pos_k$, \ie
\begin{equation}\label{eq::areaChangeB}
\changeArea = {\sum_{l\in C_k} 
\delta_{k,\eps}a_l} 	\quad \mbox{with }  \delta_{k,\eps}a_l :=
\int_{\tau_l} H(-\discreteOperator\levelset) - H(-\levelset)  \ddx,
\end{equation}
where $H(x)$ denotes the Heaviside step function and
\begin{equation}\label{eq::areaChangeSet}
C_k =  \{l \in I_{\pos_k} :  \tau_l \cap \partial \Omega(\discreteOperator\levelset) \neq \emptyset \}
\end{equation}
 is the set of all indices of elements adjacent to $\pos_k$ which are intersected by the perturbed interface. \black
 Note that, for $\eps>0$ small enough, $C_k$ does not depend on the concrete value of $\eps$. \black
For an element $\tau_l$ with $l \in I_{\pos_k}$ we denote the three vertices in counter-clockwise orientation by $\pos_{l_1}$, $\pos_{l_2}$, $\pos_{l_3}$ and assume that $\pos_k = \pos_{l_1}$. Moreover we denote $ \levelset_{l_j} := \levelset(\pos_{l_j})$ and $\tilde \levelset_{l_j} := \discreteOperator\levelset(\pos_{l_j})$ for $j=1,2,3$ and small enough $\eps$.
In the following, we will be interested in 
\begin{align}
    d_k a := \sum_{l \in C_k} d_k a_l \quad \mbox{ with } d_k a_l := \underset{\eps \searrow 0}{\mbox{lim }} \frac{\delta_{k,\eps} a_l}{\eps^o}.
\end{align}
with $\delta_{k,\eps} a_l$ defined in \eqref{eq::areaChange}.
We consider six different sets (see Figure \ref{fig::sets} for an illustration)
\begin{align}\label{eq_def_Ixk_ABCpm}
    \begin{aligned}
I_{\pos_k}^{A_+} = & \{ l \in I_{\pos_k}: \tilde\phi_{l_1} > 0, \tilde\phi_{l_2} < 0, \tilde\phi_{l_3} < 0 \}, &&&
I_{\pos_k}^{A_-} = & \{ l \in I_{\pos_k}: \tilde\phi_{l_1} < 0, \tilde\phi_{l_2} > 0, \tilde\phi_{l_3} > 0 \}, \\
I_{\pos_k}^{B_+} =& \{ l \in I_{\pos_k}: \tilde\phi_{l_1} < 0, \tilde\phi_{l_2} > 0, \tilde\phi_{l_3} < 0 \}, &&&
I_{\pos_k}^{B_-} =& \{ l \in I_{\pos_k}: \tilde\phi_{l_1} > 0, \tilde\phi_{l_2} < 0, \tilde\phi_{l_3} > 0 \}, \\
I_{\pos_k}^{C_+} =& \{ l \in I_{\pos_k}: \tilde\phi_{l_1} < 0, \tilde\phi_{l_2} < 0, \tilde\phi_{l_3} > 0 \}, &&&
I_{\pos_k}^{C_-} =& \{ l \in I_{\pos_k}: \tilde\phi_{l_1} > 0, \tilde\phi_{l_2} > 0, \tilde\phi_{l_3} < 0 \},
    \end{aligned}
\end{align}
such that 
\begin{align*}
C_k = I_{\pos_k}^{A_+} \cup I_{\pos_k}^{A_-} \cup I_{\pos_k}^{B_+} \cup I_{\pos_k}^{B_-} \cup I_{\pos_k}^{C_+} \cup I_{\pos_k}^{C_-}
\end{align*}
with a direct sum on the right hand side. We can thus split the sum in \eqref{eq::areaChangeB} into six parts,
\begin{align*}
\changeArea = 
\mathcal{I}_{I_{\pos_k}^{A_+}} + \mathcal{I}_{I_{\pos_k}^{A_-}}+
\mathcal{I}_{I_{\pos_k}^{B_+}} + \mathcal{I}_{I_{\pos_k}^{B_-}}+
\mathcal{I}_{I_{\pos_k}^{C_+}} + \mathcal{I}_{I_{\pos_k}^{C_-}}.
\end{align*}
\begin{figure}
	\begin{subfigure}[t]{0.33\textwidth}
	\includegraphics{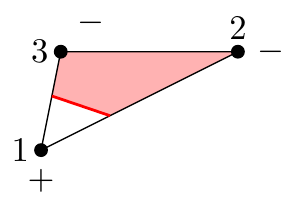}
	\subcaption{Triangle is in $I_{\pos_k}^{A_+}$}	
	\end{subfigure}\hfill
	\begin{subfigure}[t]{0.33\textwidth}
	\includegraphics{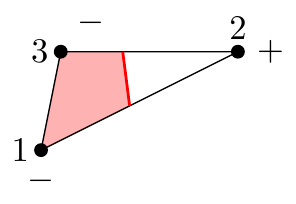}
	\subcaption{Triangle is in $I_{\pos_k}^{B_+}$}
	\end{subfigure}\hfill
	\begin{subfigure}[t]{0.33\textwidth}
	\includegraphics{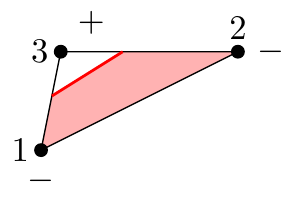}
	\subcaption{Triangle is in $I_{\pos_k}^{C_+}$}
	\end{subfigure}

	\begin{subfigure}[t]{0.33\textwidth}
	\includegraphics{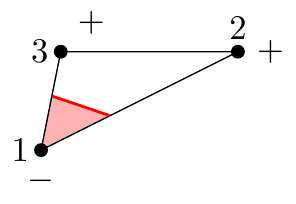}
	\subcaption{Triangle is in $I_{\pos_k}^{A_-}$}	
	\end{subfigure}\hfill
	\begin{subfigure}[t]{0.33\textwidth}
		\includegraphics{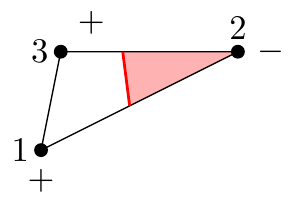}
		\subcaption{Triangle is in $I_{\pos_k}^{B_-}$}		
	\end{subfigure}\hfill
	\begin{subfigure}[t]{0.33\textwidth}
	\includegraphics{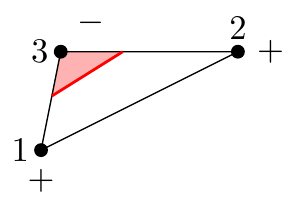}
	\subcaption{Triangle is in $I_{\pos_k}^{C_-}$}		
	\end{subfigure}
	\caption{Illustration of the sets $I_{\pos_k}^{A_+},~I_{\pos_k}^{A_-},~I_{\pos_k}^{B_+},~I_{\pos_k}^{B_-},~I_{\pos_k}^{C_+} ,~ I_{\pos_k}^{C_-}$. The nodal values of the level-set functions are indicated by $-$, $+$. The interface is drawn in red.}
	\label{fig::sets}
\end{figure}
\begin{figure}\centering
\begin{subfigure}[t]{0.49\textwidth}\centering
	\includegraphics{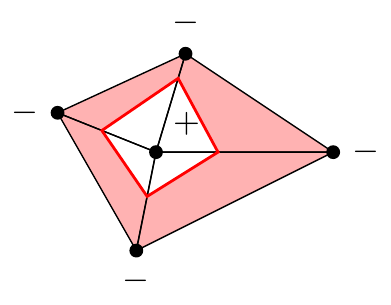}
	\subcaption{Triangles are in $I_{\pos_k}^{A_+}$}
	\label{fig::IAtopA}	
\end{subfigure}
\begin{subfigure}[t]{0.49\textwidth}\centering
	\includegraphics{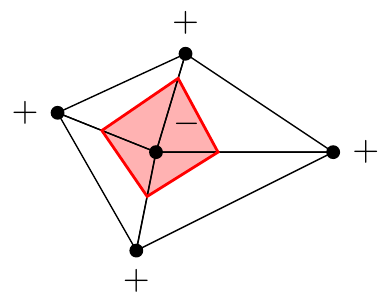}
	\subcaption{Triangles are in $I_{\pos_k}^{A_-}$}
	\label{fig::IAtopB}		
\end{subfigure}
\caption{Illustration of $I_{\pos_k}^{A_+}$ and $I_{\pos_k}^{A_-}$ in the case of topological perturbations.}
\label{fig::topDir}
\end{figure}
\paragraph{Configuration A}
For $l \in I_{\pos_k}^{A_+}$ we have
\begin{equation*}
\begin{aligned}
\int_{\tau_{l}} H(-\discreteOperator\levelset(\pos)) \ddx &= 
 \frac{\detJl}{2} \left(1- \int_{\xi_1=0}^{\ell_1} \int_{\xi_2=0}^{\ell_2\left(1-\frac{\xi_1}{\ell_1}\right)}  \,d\xi_2d\xi_1 \right) \\
&=   \frac{\detJl}{2}(1-\ell_1\ell_2),
\end{aligned}
\end{equation*}
with
\begin{align*}
	\ell_1 = \frac{\levelset_{l_1}+\eps}{\levelset_{l_1}+\eps-\levelset_{l_2}}, \qquad
	\ell_2 = \frac{\levelset_{l_1}+\eps}{\levelset_{l_1}+\eps-\levelset_{l_3}}.
\end{align*}
Therefore,
\begin{equation}
\begin{aligned}\label{eq::intAplus}
\mathcal{I}_{I_{\pos_k}^{A_+}}
&= \sum_{l \in I_{\pos_k}^{A_+}} \frac{\detJl}2 \left(-\frac{(\levelset_{l_1}+\eps)^2}{(\levelset_{l_1}+\eps-\levelset_{l_2})(\levelset_{l_1}+\eps-\levelset_{l_3})}+\frac{\levelset_{l_1}^2}{(\levelset_{l_1}-\levelset_{l_2})(\levelset_{l_1}-\levelset_{l_3})} \right)\\
&= \sum_{l \in I_{\pos_k}^{A_+}} \frac{\detJl}2 \left(\frac{\eps \levelset_{l_1} \left[\levelset_{l_1}(\levelset_{l_2} + \levelset_{l_3})-2\levelset_{l_2}\levelset_{l_3}  \right]+\eps^2\left[\levelset_{l_1}(\levelset_{l_2} + \levelset_{l_3})-\levelset_{l_2}\levelset_{l_3} \right]}{(\levelset_{l_1} - \levelset_{l_2})^2(\levelset_{l_1} - \levelset_{l_3})^2 + O(\varepsilon)} \right).
\end{aligned}
\end{equation}
For $\pos_k \in \setTminus$ and $\discreteOperator = \Tplus$, it holds $I_{\pos_k}=I_{\pos_k}^{A_+}$ (see Figure \ref{fig::IAtopA}) and we have to consider \eqref{eq::intAplus} with $\levelset_{l_1} = 0$. Thus, we obtain
\begin{align}
\mathcal{I}_{I_{\pos_k}^{A_+}}= -
\sum_{l \in I_{\pos_k}^{A_+}}\frac{\eps^2\detJl}2 \frac{\levelset_{l_2}\levelset_{l_3} }{ \levelset_{l_2}^2\levelset_{l_3}^2 + O(\varepsilon)},
\end{align}
and conclude for this case   
\begin{equation}\label{eq::areaChangeTopAplus}
 d_{k}a = \lim_{\varepsilon\searrow 0}\frac{\changeArea}{\eps^2} =- \sum_{l \in I_{\pos_k}^{A_+}}\frac{\detJl}{2\levelset_{l_2}\levelset_{l_3}}.
\end{equation}
Moreover, for $\pos_k \in \setTplus$ and $\discreteOperator = \Tminus$, it holds $I_{\pos_k} = I_{\pos_k}^{A_-}$ (see Figure \ref{fig::IAtopB}) and we have for $l \in I_{\pos_k}^{A_-}$,
\begin{align*}
\int_{\tau_{l}} H(-\discreteOperator\levelset(\pos)) \ddx &= 
\frac{\detJl}{2} \int_{\xi_1=0}^{\ell_1} \int_{\xi_2=0}^{\ell_2\left(1-\frac{\xi_1}{\ell_1}\right)}  \,d\xi_2d\xi_1  =   \frac{\detJl}{2}\ell_1\ell_2,
\end{align*}
which leads to 
\begin{equation}
\begin{aligned}\label{eq::intAminus}
\mathcal{I}_{I_{\pos_k}^{A_-}}
&= -\sum_{l \in I_{\pos_k}^{A_-}} \frac{\detJl}2 \left(\frac{\eps \levelset_{l_1} \left[\levelset_{l_1}(\levelset_{l_2} + \levelset_{l_3})-2\levelset_{l_2}\levelset_{l_3}  \right]+\eps^2\left[\levelset_{l_1}(\levelset_{l_2} + \levelset_{l_3})-\levelset_{l_2}\levelset_{l_3} \right]}{(\levelset_{l_1} - \levelset_{l_2})^2(\levelset_{l_1} - \levelset_{l_3})^2 + O(\varepsilon)} \right).
\end{aligned}
\end{equation}
Therefore, we obtain for this case
\begin{equation*}
d_{k}a = \lim_{\varepsilon\searrow 0}\frac{\changeArea}{\eps^2} = \sum_{l \in I_{\pos_k}^{A_-}}\frac{\detJl}{2\levelset_{l_2}\levelset_{l_3}}.
\end{equation*}
Finally, for $\pos_k \in \setS$ and $\discreteOperator = \S$ we deduce from \eqref{eq::intAplus} 
\begin{equation}\label{eq::shapeA}
\lim_{\varepsilon\searrow 0}\frac{\mathcal{I}_{I_{\pos_k}^{A_+}}}{\eps} = \sum_{l \in I_{\pos_k}^{A_+}}\frac{\detJl\;\levelset_{l_1} \left[\levelset_{l_1}(\levelset_{l_2} + \levelset_{l_3}) - 2\levelset_{l_2}\levelset_{l_3} \right]}{2(\levelset_{l_1} - \levelset_{l_2})^2(\levelset_{l_1} - \levelset_{l_3})^2},
\end{equation}
and from \eqref{eq::intAminus}
\begin{equation} \label{eq_IAplus_o_eps}
\lim_{\varepsilon\searrow 0}\frac{\mathcal{I}_{I_{\pos_k}^{A_-}}}{\eps} = -\sum_{l \in I_{\pos_k}^{A_-}}\frac{\detJl\;\levelset_{l_1} \left[\levelset_{l_1}(\levelset_{l_2} + \levelset_{l_3})-2\levelset_{l_2}\levelset_{l_3}  \right]}{2(\levelset_{l_1} - \levelset_{l_2})^2(\levelset_{l_1} - \levelset_{l_3})^2}.
\end{equation}
\paragraph{Configuration B}
We note that Configuration B can only occur for the case $\pos_k \in \setS$ and $\discreteOperator = \S$. For $l \in I_{\pos_k}^{B_+}$ it holds
\begin{align}\label{eq::configurationB}
\int_{\tau_{l}} H(-\S\levelset(\pos)) \ddx = \frac{\detJl}2\left(1- \frac{\levelset_{l_2}^2}{(\levelset_{l_2}-\levelset_{l_3})(\levelset_{l_2}-\levelset_{l_1}-\eps)} \right),
\end{align}
and
\begin{equation*}
\begin{aligned}
\mathcal{I}_{I_{\pos_k}^{B_+}}
&= \sum_{l \in I_{\pos_k}^{B_+}}\frac{\detJl\levelset_{l_2}^2}2 \frac{(\levelset_{l_2}-\levelset_{l_3})(\levelset_{l_2}-\levelset_{l_1}-\varepsilon)-(\levelset_{l_2}-\levelset_{l_3})(\levelset_{l_2}-\levelset_{l_1})}{(\levelset_{l_2}-\levelset_{l_3})(\levelset_{l_2}-\levelset_{l_1})(\levelset_{l_2}-\levelset_{l_3})(\levelset_{l_2}-\levelset_{l_1}-\varepsilon)}\\
&= \sum_{l \in I_{\pos_k}^{B_+}}\frac{\detJl\levelset_{l_2}^2}2 \frac{-\varepsilon(\levelset_{l_2}-\levelset_{l_3})}{(\levelset_{l_2}-\levelset_{l_3})^2(\levelset_{l_2}-\levelset_{l_1})(\levelset_{l_2}-\levelset_{l_1}-\varepsilon)}.
\end{aligned}
\end{equation*}
Thus,
\begin{equation}\label{eq::shapeB}
\lim_{\eps\searrow 0}\frac{\mathcal{I}_{I_{\pos_k}^{B_+}}}{\eps} = -\sum_{l \in I_{\pos_k}^{B_+}} \frac{\detJl}2 \frac{\levelset_{l_2}^2}{(\levelset_{l_2}-\levelset_{l_3})(\levelset_{l_2}-\levelset_{l_1})^2}.
\end{equation}
For the case $l \in I_{\pos_k}^{B_-}$ we have
\begin{align}\label{eq::configurationBminus}
	\int_{\tau_{l}} H(-\S\levelset(\pos)) \ddx = \frac{\detJl}2 \frac{\levelset_{l_2}^2}{(\levelset_{l_2}-\levelset_{l_3})(\levelset_{l_2}-\levelset_{l_1}-\eps)},
\end{align}
and obtain
\begin{equation*}
\lim_{\varepsilon\searrow 0}\frac{\mathcal{I}_{I_{\pos_k}^{B_-}}}{\eps} = \sum_{l \in I_{\pos_k}^{B_-}} \frac{\detJl}2 \frac{\levelset_{l_2}^2}{(\levelset_{l_2}-\levelset_{l_3})(\levelset_{l_2}-\levelset_{l_1})^2}.
\end{equation*}
\paragraph{Configuration C}
Analogously as for Configuration B, we note that Configuration C can only occur for the case $\pos_k \in \setS$ and $\discreteOperator = \S$. We have  
\begin{equation}\label{eq::shapeC}
\lim_{\eps\searrow 0}\frac{\mathcal{I}_{I_{\pos_k}^{C_+}}}{\eps} = -\sum_{l \in I_{\pos_k}^{C_+}}\frac{\detJl}2 \frac{\levelset_{l_3}^2}{(\levelset_{l_3}-\levelset_{l_2})(\levelset_{l_3}-\levelset_{l_1})^2},
\end{equation}
and
\begin{equation}
\lim_{\varepsilon\searrow 0}\frac{\mathcal{I}_{I_{\pos_k}^{C_-}}}{\eps} = \sum_{l \in I_{\pos_k}^{C_-}}\frac{\detJl}2 \frac{\levelset_{l_3}^2}{(\levelset_{l_3}-\levelset_{l_2})(\levelset_{l_3}-\levelset_{l_1})^2}.
\end{equation}

Summarizing, we have shown the following result.
\begin{theorem}\label{thm::areaDerivative}
For $\pos_k \in \setTminus$ we have
\begin{align}\label{eq_derivAreaTminus}
    d_k a = \sum_{l \in I_{\pos_k}^{A_+}} d_ka_l =  \sum_{l \in I_{\pos_k}^{A_+}}(-1)\frac{\detJl}{2\levelset_{l_2}\levelset_{l_3}}.
\end{align}
For $\pos_k \in \setTplus$ we have
\begin{align}\label{eq_derivAreaTplus}
    d_k a = \sum_{l \in I_{\pos_k}^{A_-}} d_ka_l =   \sum_{l \in I_{\pos_k}^{A_-}}\frac{\detJl}{2\levelset_{l_2}\levelset_{l_3}}.
\end{align}
For $\pos_k \in \setS$ we have
\begin{equation}\label{eq::derivativeArea}
\begin{aligned}
    d_k a =& \sum_{l \in C_{k}} d_k a_l\\=&
    \sum_{l \in I_{\pos_k}^{A_+}}\frac{\detJl\;\levelset_{l_1} \left[\levelset_{l_1}(\levelset_{l_2} + \levelset_{l_3})-2\levelset_{l_2}\levelset_{l_3}  \right]}{2(\levelset_{l_1} - \levelset_{l_2})^2(\levelset_{l_1} - \levelset_{l_3})^2} + \sum_{l \in I_{\pos_k}^{A_-}}  (-1)  \frac{\detJl\;\levelset_{l_1} \left[\levelset_{l_1}(\levelset_{l_2} + \levelset_{l_3}) - 2\levelset_{l_2}\levelset_{l_3} \right]}{2(\levelset_{l_1} - \levelset_{l_2})^2(\levelset_{l_1} - \levelset_{l_3})^2} \\
    &+\sum_{l \in I_{\pos_k}^{B_+}}  (-1)  \frac{\detJl}2 \frac{\levelset_{l_2}^2}{(\levelset_{l_2}-\levelset_{l_3})(\levelset_{l_2}-\levelset_{l_1})^2}  +  \sum_{l \in I_{\pos_k}^{B_-}} \frac{\detJl}2 \frac{\levelset_{l_2}^2}{(\levelset_{l_2}-\levelset_{l_3})(\levelset_{l_2}-\levelset_{l_1})^2} \\
    &+\sum_{l \in I_{\pos_k}^{C_+}}  (-1)  \frac{\detJl}2 \frac{\levelset_{l_3}^2}{(\levelset_{l_3}-\levelset_{l_2})(\levelset_{l_3}-\levelset_{l_1})^2}  +  \sum_{l \in I_{\pos_k}^{C_-}}\frac{\detJl}2 \frac{\levelset_{l_3}^2}{(\levelset_{l_3}-\levelset_{l_2})(\levelset_{l_3}-\levelset_{l_1})^2}. 
\end{aligned}
\end{equation}
\end{theorem}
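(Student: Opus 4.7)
The plan is to reduce the global area change to a sum over cut elements and compute each local contribution explicitly using the piecewise-linear structure of $\phi$ and $O_{k,\varepsilon}\phi$. First, I would note that since $O_{k,\varepsilon}$ only modifies the nodal value at $x_k$, the integrand $H(-O_{k,\varepsilon}\phi)-H(-\phi)$ is supported on the one-ring of $x_k$, and further vanishes on elements whose sign pattern contains no sign change. Hence $\delta_{k,\varepsilon} a = \sum_{l \in C_k} \delta_{k,\varepsilon} a_l$ with $C_k$ as in \eqref{eq::areaChangeSet}, and for each $l \in C_k$ the vertex labelled $x_{l_1} = x_k$ falls into exactly one of the six configurations $I_{x_k}^{A_\pm}, I_{x_k}^{B_\pm}, I_{x_k}^{C_\pm}$ defined in \eqref{eq_def_Ixk_ABCpm}.

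Next, I would determine which configurations are actually possible for each of the three cases. If $x_k \in \mathfrak{T}^-$ and $O_{k,\varepsilon} = T^{-\to+}_{k,\varepsilon}$, then $\phi_{l_j} \le 0$ for every vertex in the one-ring and only $\tilde\phi_{l_1} = \varepsilon > 0$ changes sign, so for sufficiently small $\varepsilon$ the only cut configuration that can arise is $A_+$. By symmetry, $x_k \in \mathfrak{T}^+$ with $T^{+\to-}_{k,\varepsilon}$ yields only $A_-$. For $x_k \in \mathfrak{S}$ and $O_{k,\varepsilon} = S_{k,\varepsilon}$, all six configurations can occur and must be treated.

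Now, for each configuration I would parametrize the element via the affine map $\Phi_l$ and, since the zero set of an affine function on a triangle is a line segment cutting two edges, write the below-zero region as either a sub-triangle (configurations $A_\pm$) or a quadrilateral (configurations $B_\pm, C_\pm$). The area formula in each case is a rational function of $(\tilde\phi_{l_1}, \tilde\phi_{l_2}, \tilde\phi_{l_3})$, e.g.\ in $A_+$ one gets $\tfrac{|\det J_l|}{2}(1 - \ell_1 \ell_2)$ with $\ell_j$ as below \eqref{eq::intAplus}. Subtracting the unperturbed area and performing a Taylor expansion in $\varepsilon$ yields the leading behaviour. For shape perturbations ($x_k \in \mathfrak{S}$), the leading term is $O(\varepsilon)$, and dividing by $\varepsilon$ and sending $\varepsilon \searrow 0$ produces the six summands in \eqref{eq::derivativeArea}. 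For the topological cases, the crucial observation is that $\phi_{l_1} = 0$ kills the $O(\varepsilon)$ term in the $A_\pm$ expansion, leaving a genuine $O(\varepsilon^2)$ contribution that, after dividing by $\varepsilon^2$, yields \eqref{eq_derivAreaTminus} and \eqref{eq_derivAreaTplus}.

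The main obstacle is the bookkeeping rather than any conceptual difficulty: one must keep track of the local cyclic labelling $(l_1, l_2, l_3)$ with $x_{l_1} = x_k$, verify that the remaining five $B_\pm, C_\pm$ configurations indeed cannot occur in the topological cases for $\varepsilon$ small, and handle the sign flips between $I^{\bullet_+}$ and $I^{\bullet_-}$ (which differ only by global sign reversal of $\tilde\phi$, so $\int_{\tau_l} H(-\tilde\phi) = |\tau_l| - \int_{\tau_l} H(\tilde\phi)$). The most delicate algebraic step is the expansion of the $A_\pm$ expression in \eqref{eq::intAplus}, where one must retain both $O(\varepsilon)$ and $O(\varepsilon^2)$ contributions so that both the shape and topological scaling regimes can be read off from a single computation.
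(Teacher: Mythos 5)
Your proposal is correct and follows essentially the paper's own argument: restrict the area change to the cut elements $C_k$, classify them into the six configurations \eqref{eq_def_Ixk_ABCpm}, compute the sub-zero area on each element as a rational function of the nodal values via the reference-element map, and expand in $\varepsilon$, with the topological cases reducing to configuration $A_\pm$ where the effective value $\levelset_{l_1}=0$ (the operators $\Tplus$, $\Tminus$ reset the nodal value at $\pos_k$ to $+\varepsilon$, $-\varepsilon$) annihilates the first-order term so that the $\varepsilon^{2}$ scaling yields \eqref{eq_derivAreaTminus}--\eqref{eq_derivAreaTplus}, exactly as in the paper. Your only slip is cosmetic: the sub-zero region is a quadrilateral in the $+$ configurations ($A_+$, $B_+$, $C_+$) and a sub-triangle in the $-$ configurations, rather than ``triangle for $A_\pm$, quadrilateral for $B_\pm$, $C_\pm$'', but the complement identity $\int_{\tau_l} H(-\tilde\levelset)\,\mathrm{d}x = |\tau_l| - \int_{\tau_l} H(\tilde\levelset)\,\mathrm{d}x$ that you invoke makes this immaterial.
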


\begin{remark}\label{remark::area}
	The corresponding computations for the denominators in \eqref{eq::topDerivative}, \ie $|\Omega(\discreteOperator\levelset)\symmDiff \Omega(\levelset)|$, are closely related to the computations presented in this section for \eqref{eq::areaChange}.
	Denoting
	\begin{align*}
        \delta_{k,\eps} \tilde a_l := \int_{\tau_l} | H(-\discreteOperator\levelset) - H(-\levelset)|  \ddx, \quad d_k \tilde a_l := \underset{\eps \searrow 0}{\mbox{lim}} \frac{\delta_{k,\eps} \tilde a_l}{\eps^o},
	\end{align*}
	we get that 
	\begin{align} \label{eq_dkatilde}
        \underset{\eps \searrow 0}{\mbox{lim }} \frac{|\Omega(\discreteOperator\levelset) \symmDiff  \Omega(\levelset)|}{\eps^o} = \sum_{l\in C_k} d_k \tilde a_l =: d_k \tilde a
	\end{align}
	where $d_k \tilde a_l = |d_k a_l|$ with the formulas for $d_k a_l$ given in \eqref{eq_derivAreaTminus}--\eqref{eq::derivativeArea}. It is obvious that, for $\pos_k \in \mathcal T^-$, $d_k a_l <0$ and for $\pos_k \in \mathcal T^+$, $d_k a_l >0$. Moreover, note that, for $\pos_k \in \mathcal S$, it holds $d_k a_l <0$ for all $l \in C_k$. This yields that
	\begin{align}
        \frac{d_k a_l}{d_k \tilde a_l} = \begin{cases} - 1 & \mbox{if }k \in \mathcal S \cup \mathcal T^-, \\
                          \; \;\, 1 & \mbox{if } k \in \mathcal T^+. 
                         \end{cases}
	\end{align}
\end{remark}

\begin{corollary} \label{cor_dVol}
    From Theorem \ref{thm::areaDerivative} and Remark \ref{remark::area}, it follows that the numerical \topshapeDerivative derivative of the volume cost function $\mbox{Vol}(\phi) := |\Omega(\phi)|$ is given by
    \begin{align*}
        d \mbox{Vol}(\phi)(\pos_k) = \begin{cases} - 1 & \mbox{if }k \in \mathcal S \cup \mathcal T^-, \\
                          \; \;\, 1 & \mbox{if } k \in \mathcal T^+. 
                         \end{cases}
    \end{align*}

\end{corollary}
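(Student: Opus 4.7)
The plan is to specialize the definition \eqref{eq::topDerivative} of the numerical topological-shape derivative to the pure volume functional $\redPhiObjectiveFunction(\phi) = |\Omega(\phi)|$. In this case the numerator in \eqref{eq::topDerivative} is, by definition, exactly $\changeArea$ for the appropriate perturbation operator $\discreteOperator \in \{\Tplus,\Tminus,\S\}$, while the denominator is $|\Omega(\discreteOperator\phi) \symmDiff \Omega(\phi)|$. Writing $o \in \{1,2\}$ for the scaling exponent appropriate to the regime ($o=2$ for topological perturbations at $\pos_k \in \setTminus \cup \setTplus$ and $o=1$ for shape perturbations at $\pos_k \in \setS$), we divide numerator and denominator by $\eps^o$ before taking the limit. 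Theorem \ref{thm::areaDerivative} then gives $\changeArea/\eps^o \to d_k a = \sum_{l \in C_k} d_k a_l$, and Remark \ref{remark::area}, specifically \eqref{eq_dkatilde}, gives $|\Omega(\discreteOperator\phi) \symmDiff \Omega(\phi)|/\eps^o \to \sum_{l \in C_k} |d_k a_l|$. The common factor $\eps^o$ cancels and we arrive at
\begin{equation*}
d\redPhiObjectiveFunction(\phi)(\pos_k) = \frac{\sum_{l \in C_k} d_k a_l}{\sum_{l \in C_k} |d_k a_l|},
\end{equation*}
provided the per-element contributions $d_k a_l$ are sign-definite within each regime.

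The remaining step is a sign check on the summands $d_k a_l$ appearing in Theorem \ref{thm::areaDerivative}. For $\pos_k \in \setTminus$, formula \eqref{eq_derivAreaTminus} gives $d_k a_l = -\detJl/(2\levelset_{l_2}\levelset_{l_3})$, and since $l \in C_k$ forces $\levelset_{l_2}, \levelset_{l_3} < 0$ strictly, every summand is negative and the ratio collapses to $-1$. The case $\pos_k \in \setTplus$ is strictly symmetric: formula \eqref{eq_derivAreaTplus} with $\levelset_{l_2}, \levelset_{l_3} > 0$ strictly yields $d_k a_l > 0$, giving ratio $+1$. For $\pos_k \in \setS$, I would walk through each of the six configurations $I_{\pos_k}^{A_\pm}, I_{\pos_k}^{B_\pm}, I_{\pos_k}^{C_\pm}$ enumerated in \eqref{eq_def_Ixk_ABCpm} and, using the sign patterns of $\tilde\levelset_{l_j}$ that define each configuration (together with the fact that at the unperturbed neighbours $\tilde\levelset_{l_j}=\levelset_{l_j}$), verify that each summand in \eqref{eq::derivativeArea} is strictly negative. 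The denominators in \eqref{eq::derivativeArea} are products of a squared factor with a sign-definite factor of the form $\levelset_{l_j}-\levelset_{l_i}$, and the explicit $(\pm 1)$ prefix in front of each of the six blocks is precisely chosen to line the summands up to a common negative sign, producing ratio $-1$.

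I do not expect any genuine obstacle: the corollary is essentially a bookkeeping consequence of Theorem \ref{thm::areaDerivative}, together with the sign-definiteness observation already noted in Remark \ref{remark::area}. The one point that warrants a moment of care is that the strict sign conditions on the relevant $\levelset_{l_j}$ (rather than only $\le$ or $\ge$) are needed so that the denominators in \eqref{eq::derivativeArea} do not vanish; this is automatic because the set $C_k$ in \eqref{eq::areaChangeSet} only retains elements that are actually crossed by the perturbed interface, which rules out the degenerate configurations in which two nodal values of $\tilde\levelset$ vanish simultaneously.
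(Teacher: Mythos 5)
Your argument is correct and follows essentially the same route as the paper: the corollary is read off by dividing numerator and denominator of \eqref{eq::topDerivative} by $\eps^o$, invoking Theorem \ref{thm::areaDerivative} for the numerator and \eqref{eq_dkatilde} for the denominator, and using the sign-definiteness of the per-element contributions $d_k a_l$ in each regime, which is precisely the observation recorded in Remark \ref{remark::area}. Your configuration-by-configuration sign check for $\pos_k \in \setS$ (and the remark on strict signs of the neighbouring level set values) merely spells out what the paper asserts without detail, so there is no substantive difference in approach.
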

\subsection{Computation of the numerical \topshapeDerivative derivative via Lagrangian framework}\label{sec::Lagrangian}
\newcommand{\uuzero}{{\uu}}
Next, we consider the computation of the numerical \topshapeDerivative derivative of an optimization problem that is constrained by a discretized PDE. For that purpose, we set $c_1 =0$ in \eqref{eq::ContinuousObjective} and $\phiObjectiveFunction(\levelset,\uu) := \objectiveFunction(\Omega(\levelset),\uu)$. The discretized problem reads
\begin{align} \label{eq_discOptiProb_J}
\underset{\levelset}{\mbox{min }} \phiObjectiveFunction(\levelset, \uu)&  \\
\mbox{s.t. } \Amat_{\levelset}  \uu &= \ff_{\levelset} \label{eq_discOptiProb_constr}
\end{align}
and we are interested in the sensitivity of $\phiObjectiveFunction$ when the level set function $\levelset$ representing the geometry is replaced by a perturbed level set function $\levelset_{\eps}=\discreteOperator\levelset$. 
 The perturbed Lagrangian for \eqref{eq_discOptiProb_J}--\eqref{eq_discOptiProb_constr} with respect to a perturbation of $\levelset$ reads 
\begin{align} \label{eq_defLagDiscr}
L(\eps, \uu, \vv) =& \phiObjectiveFunction(\phi_\eps, \uu) + \Amat_\eps\uu \cdot \vv - \ff_\eps \cdot \vv
\end{align}
where we use the abbreviated notation $\Amat_\eps := \Amat_{\levelset_\eps}$, and $\ff_\eps := \ff_{\levelset_\eps}$. Moreover, for $\eps \geq 0$, we define the perturbed state $\uu_\eps$ as the solution to
\begin{align*}
0 = \partial_\vv L(\eps, \uu_\eps, \vv),
\end{align*}
i.e. $\uu_\eps$ is the solution to
\begin{align}  \label{eq_defueps2}
\Amat_\eps \uu_\eps = \ff_\eps
\end{align}
and the (unperturbed) adjoint state $\pp$ as the solution to
\begin{align} \label{eq_defAdjointDiscr}
0 = \partial_\uu L(0, \uu, \pp),
\end{align}
for the state $\uu$ given, i.e. $\pp$ solves
\begin{align*}
\Amat^\top \pp = - \partial_\uu J(\levelset, \uu).
\end{align*}
Note that we use the notation $\uu$ for $\uu_{\eps = 0}$.
The numerical \topshapeDerivative derivative at the node $\pos_k$ can be computed as the limit
\begin{align*}
d \redPhiObjectiveFunction(\levelset)(\pos_k) = \underset{\eps \searrow 0}{\mbox{lim }} \frac{1}{|\Omega(\levelset_\eps) \symmDiff \Omega(\levelset)|} (\phiObjectiveFunction(\phi_\eps, \uu_\eps) - \phiObjectiveFunction(\phi, \uu) ).
\end{align*}

With the help of the Lagrangian \eqref{eq_defLagDiscr}, we can rewrite the right hand side as
\begin{align*}
\phiObjectiveFunction(\phi_\eps, \uu_\eps) - \phiObjectiveFunction(\phi,\uu) = L(\eps, \uu_\eps, \pp) - L(0, \uuzero, \pp)
\end{align*}
where we used that $\uu_\eps$ solves \eqref{eq_defueps2} for $\eps \geq 0$. Following the approach used in \cite{simplified}, we use the fundamental theorem of calculus as well as \eqref{eq_defAdjointDiscr} to rewrite this as
\begin{align}
\phiObjectiveFunction(\phi_\eps, \uu_\eps) - \phiObjectiveFunction(\phi,\uuzero) =& \int_0^1 [\partial_\uu L(\eps, \uuzero + s(\uu_\eps - \uuzero), \pp) - \partial_\uu L(\eps, \uu, \pp)](\uu_\eps - \uuzero) \mbox ds \\
&+[ \partial_\uu L(\eps, \uu, \pp) - \partial_\uu L(0, \uu, \pp)](\uu_\eps - \uu) \\
&+ L(\eps, \uu, \pp) - L(0, \uu, \pp).
\end{align}

Thus the numerical \topshapeDerivative derivative can be obtained as the sum of three limits,
\begin{align*}
d \redPhiObjectiveFunction(\levelset)(\pos_k) = R_1(\uu, \pp) + R_2(\uu, \pp) + R_0(\uu, \pp)
\end{align*}
where
\begin{align*}
R_1(\uu, \pp) :=& \underset{\eps \searrow 0}{\mbox{lim }} \frac{1}{|\Omega(\levelset_\eps) \symmDiff  \Omega(\levelset)|}  \int_0^1 [\partial_\uu L(\eps, \uuzero + s(\uu_\eps - \uuzero), \pp) - \partial_\uu L(\eps, \uu, \pp)](\uu_\eps - \uuzero) \mbox ds, \\
R_2(\uu, \pp) :=& \underset{\eps \searrow 0}{\mbox{lim }} \frac{1}{|\Omega(\levelset_\eps) \symmDiff  \Omega(\levelset)|}[ \partial_\uu L(\eps, \uu, \pp) - \partial_\uu L(0, \uu, \pp)](\uu_\eps - \uu), \\
R_0(\uu, \pp) := & \underset{\eps \searrow 0}{\mbox{lim }} \frac{1}{|\Omega(\levelset_\eps) \symmDiff  \Omega(\levelset)|} [ L(\eps, \uu, \pp) - L(0, \uu, \pp)].
\end{align*}

For $\phiObjectiveFunction(\levelset_\eps, \uu) = c_2 (\uu - \hat \uu) \tilde \MM_\eps (\uu - \hat \uu)$, where $\tilde \MM_\eps := \tilde \MM_{\levelset_\eps}$ represents the matrix $\tilde \MM$ defined in \eqref{eq_defMtilde} with $\Omega = \Omega(\levelset_\eps)$, we get
\begin{align*}
R_1(\uu, \pp) =& c_2 \underset{\eps \searrow 0}{\mbox{lim }}  \frac{1}{|\Omega(\levelset_\eps) \symmDiff  \Omega(\levelset)|} (\uu_\eps - \uuzero)^\top \tilde \MM_\eps (\uu_\eps - \uuzero).
\end{align*}
Moreover, 
\begin{align} \label{eq_R2discr}
R_2(\uu, \pp) =& \underset{\eps \searrow 0}{\mbox{lim }} \frac{1}{|\Omega(\levelset_\eps) \symmDiff  \Omega(\levelset)|} \left[ 2 c_2 (\tilde \MM_\eps - \tilde \MM) (\uu - \hat \uu)  \cdot  (\uu_\eps - \uuzero) + (\Amat_\eps - \Amat) (\uu_\eps  - \uuzero) \cdot \pp \right],
\end{align}
and
\begin{align} \label{eq_R0discr}
R_0(\uu, \pp) =& \underset{\eps \searrow 0}{\mbox{lim }} \frac{1}{|\Omega(\levelset_\eps) \symmDiff \Omega(\levelset)|} \left[ c_2(\uuzero - \hat \uu)^\top(\tilde \MM_\eps - \tilde \MM)  (\uuzero - \hat \uu) + \pp^\top (\Amat_\eps - \Amat) \uu - (\ff_\eps - \ff)\cdot \pp \right].
\end{align}

\begin{lemma}\label{lem_uepsu0} There exist constants $c>0, \hat \eps >0$ such that for all $\eps \in (0,\hat\eps)$
    \begin{align*}
        \| \uu_\eps - \uuzero \| \leq c \, \eps^o.
    \end{align*}
    Here, $o=1$ in the case of a shape perturbation and $o=2$ in the case of a topological perturbation.
\end{lemma}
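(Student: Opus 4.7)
The plan is to use the standard perturbation identity for linear systems and reduce the claim to an estimate on how much the matrix $\mathbf{A}_\varepsilon$ and right-hand side $\mathbf{f}_\varepsilon$ can change when $\phi$ is replaced by $\mathcal O_\varepsilon \phi$. Writing
\begin{equation*}
\mathbf A_\varepsilon(\mathbf u_\varepsilon - \mathbf u_0) = (\mathbf f_\varepsilon - \mathbf f) - (\mathbf A_\varepsilon - \mathbf A)\mathbf u_0,
\end{equation*}
we obtain, provided $\mathbf A_\varepsilon$ is uniformly invertible,
\begin{equation*}
\|\mathbf u_\varepsilon - \mathbf u_0\| \le \|\mathbf A_\varepsilon^{-1}\| \bigl(\|\mathbf f_\varepsilon - \mathbf f\| + \|\mathbf A_\varepsilon - \mathbf A\|\,\|\mathbf u_0\|\bigr).
\end{equation*}
Hence the proof reduces to three ingredients: (i) a uniform bound on $\|\mathbf A_\varepsilon^{-1}\|$, (ii) $\|\mathbf A_\varepsilon - \mathbf A\| \le C\varepsilon^o$, (iii) $\|\mathbf f_\varepsilon - \mathbf f\| \le C\varepsilon^o$.

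For (i), the bilinear form associated with $\mathbf A_\varepsilon$ remains coercive uniformly in $\varepsilon$, because the coefficients $\lambda_{\Omega_\varepsilon}, \alpha_{\Omega_\varepsilon}$ are bounded from below by $\min(\lambda_1,\lambda_2)>0$ and $\min(\alpha_1,\alpha_2)\ge 0$ independently of $\varepsilon$, and the assumption $|\Gamma_D|>0$ or $\alpha_1,\alpha_2>0$ ensures a Poincar\'e/coercivity constant that does not degenerate as $\Omega(\phi_\varepsilon)$ varies. On the fixed finite element space $V_h$, this translates to $\|\mathbf A_\varepsilon^{-1}\| \le c$ for all small $\varepsilon$.

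For (ii) and (iii), observe that $(\mathcal O_\varepsilon\phi)(\pos_j)=\phi(\pos_j)$ for every node $\pos_j\neq \pos_k$, so $\mathbf A_\varepsilon - \mathbf A$ and $\mathbf f_\varepsilon - \mathbf f$ can only have nonzero entries coming from elements $\tau_l$ with $l\in I_{\pos_k}$, of which there are finitely many with uniformly bounded geometry. For such an element, each differing entry has the form
\begin{equation*}
\int_{\tau_l}\bigl(\kappa_{\Omega(\phi_\varepsilon)} - \kappa_{\Omega(\phi)}\bigr)\, g_{ij}(\pos)\,\mathrm d\pos,
\end{equation*}
where $\kappa\in\{\lambda,\alpha,f\}$ and $g_{ij}$ is a product of hat basis functions (or their gradients) and is uniformly bounded on $\tau_l$. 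Since the integrand vanishes outside $\tau_l\cap(\Omega(\phi_\varepsilon)\symmDiff\Omega(\phi))$, each such entry is bounded in absolute value by $C\cdot|\tau_l\cap(\Omega(\phi_\varepsilon)\symmDiff\Omega(\phi))|$. Summing over the finitely many affected elements and using Theorem \ref{thm::areaDerivative} together with Remark \ref{remark::area} (which give $|\Omega(\phi_\varepsilon)\symmDiff\Omega(\phi)| = O(\varepsilon^o)$ with $o=1$ in the shape case and $o=2$ in the topological case) yields the desired bounds on $\|\mathbf A_\varepsilon-\mathbf A\|$ and $\|\mathbf f_\varepsilon-\mathbf f\|$.

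The main obstacle is to rigorously ensure uniformity of the inverse bound in (i), which in the discrete setting amounts to a uniform lower bound on the spectrum of $\mathbf A_\varepsilon$; this follows cleanly from the fact that the material coefficients are bounded below by positive constants independent of $\varepsilon$, so the same continuous coercivity argument survives verbatim on the fixed mesh. Combining (i)--(iii) then yields $\|\mathbf u_\varepsilon - \mathbf u_0\| \le c\,\varepsilon^o$ for all sufficiently small $\varepsilon>0$.
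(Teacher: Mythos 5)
Your proposal is correct and follows essentially the same route as the paper: subtract the perturbed and unperturbed linear systems, use uniform ellipticity/invertibility of $\mathbf A_\eps$ on the fixed mesh, bound the entries of $\mathbf A_\eps-\mathbf A$ and $\mathbf f_\eps-\mathbf f$ by integrals over $\Omega(\levelset_\eps)\symmDiff\Omega(\levelset)$, and conclude with the $O(\eps^o)$ estimate of the symmetric difference from Remark \ref{remark::area}. The only cosmetic difference is that you make the locality (only elements in $I_{\pos_k}$ are affected) and the uniform coercivity argument more explicit, which the paper leaves implicit.
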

\begin{proof}    
    Subtracting \eqref{eq_defueps2} for $\eps = 0$ from the same equation with $\eps >0$, we get
    \begin{align*}
         \Amat_\eps (\uu_\eps - \uu) &= \ff_\eps - \ff - (\Amat_\eps - \Amat) \uu 
    \end{align*}
    and thus, by the ellipticity of the bilinear form corresponding to $\Amat_\eps$ and the triangle inequality, there is a constant $c>0$ such that for all $\eps>0$ small enough
    \begin{align}
        \| \uu_\eps - \uu \| \leq c ( \| \ff_\eps - \ff \| - \| \Amat_\eps - \Amat \|  \| \uu \| ).
    \end{align}
    For the difference between the perturbed and unperturbed right hand sides we have
    \begin{align*}
        |(\ff_\eps - \ff)_i| \leq& |(f_1 - f_2)| \int_{\Omega(\phi_\eps) \symmDiff \Omega(\phi)} |\varphi_i(x)| \; \mbox dx \\
        |(\Amat_\eps - \Amat)_{i,j}| \leq & \int_{\Omega(\phi_\eps) \symmDiff \Omega(\phi)}  \bigg( |(\lambda_1 - \lambda_2)| |\nabla \varphi_j(x)| |\nabla \varphi_i(x)|  +  |(\alpha_1 - \alpha_2)| |\varphi_j(x)| |\varphi_i(x)|  \bigg)\; \mbox dx.
        \end{align*}
    The result follows from the boundedness of $|\varphi_i(x)|$ and $|\nabla \varphi_i(x)|$ together with \eqref{eq_dkatilde} which implies the existence of $c>0$ such that $|\Omega(\phi_\eps) \symmDiff \Omega(\phi)| \leq c \eps^o$ (cf. Remark \ref{remark::area}). 
\end{proof}
From Lemma \ref{lem_uepsu0} it follows that the terms $R_1(\uu, \pp)$ and $R_2(\uu, \pp)$ vanish. We remark that this is in contrast to the continuous case, where asymptotic analysis shows that $R_2$ does not vanish. We will address this issue in more detail in Section \ref{sec::connection}. 
Thus, in the discrete setting we obtain $d \redPhiObjectiveFunction(\levelset)(\pos_k)=R_0(\uu, \pp)$, i.e.,
\begin{align}\label{eq::derivativeTracking}
d \redPhiObjectiveFunction(\levelset)(\pos_k) &= 
\frac{\mathbf p^\top(d_k \mathbf A \,\uuzero - d_k \mathbf f)
	+c_2 (\uuzero - \hat \uu)^\top d_k\tilde \MM  (\uuzero - \hat \uu)
}{ d_k\tilde a} 
\end{align}
where%
\begin{align} \label{eq_derivativeLimits}
\begin{aligned}
d_k \Amat &= \lim_{\varepsilon\searrow 0} \frac{\Amat_\eps- \Amat}{\varepsilon^o}, &&&
d_k \tilde \MM& = \lim_{\varepsilon\searrow 0} \frac{\tilde\MM_\eps- \tilde\MM}{\varepsilon^o}, \\
d_k \ff &= \lim_{\varepsilon\searrow 0} \frac{\ff_\eps-\ff}{\varepsilon^o}, &&&
d_k \tilde a &= \lim_{\varepsilon\searrow 0} \frac{|\Omega(\levelset_\eps)\symmDiff \Omega(\levelset)|}{\varepsilon^o}, 
\end{aligned}
\end{align}
with $o=1$ for $\pos_k\in\setS$ and $o =2$ for $\pos_k\in\setTminus \cup \setTplus$. To obtain \eqref{eq::derivativeTracking}, we divided both numerator and denominator of \eqref{eq_R0discr} by $\eps^o$ and used that the limit of the quotient coincides with the quotient of the limits provided both limits exist and the limit in the denominator does not vanish. Next we state the numerical \topshapeDerivative derivative of problem \eqref{eq::ContinuousProblem} for arbitrary constant weights $c_1, c_2 \geq 0$ in the cost function \eqref{eq::ContinuousObjective}.

\begin{theorem}[Numerical \topshapeDerivative derivative] \label{thm_numTopShapeDer_formula}
	For $l=1,\dots,N$, let $\mathbf u_l = [u_{l_1},u_{l_2},u_{l_3}]^\top$ and $\mathbf p_l = [p_{l_1},p_{l_2},p_{l_3}]^\top$ be the nodal values for element $\tau_{l}$ of the solution and the adjoint, and 
	\begin{equation*}
	\mathbf k_{0,l}[i,j] = \left(J_l^{-1}\nabla_\xi \psi_j\right)^\top \left(J_l^{-1}\nabla_\xi \psi_i\right), \quad i,j =1,\dots,3.
	\end{equation*}
	Moreover, $u_k = u(\pos_k)$, $p_k = p(\pos_k)$, and $\hat u_k = \hat u(\pos_k)$.  
	For $\pos_k \in \setTminus$ the numerical topological derivative reads
	\begin{align}\label{eq::formulaTopologicalDerivativePlus}
	d\redPhiObjectiveFunction(\levelset)(\pos_k) = -c_1 
	-\Delta \lambda	\frac{\sum_{l \in \setIk}  \frac{\mathbf p_{l}^\top \mathbf k_{0,l} \mathbf u_{l}\detJl}{\levelset_{l_2}\levelset_{l_3}}}{\sum_{l \in \setIk}\frac{\detJl}{\levelset_{l_2}\levelset_{l_3}} }	-\Delta \alpha p_ku_k	+\Delta fp_k	- c_2\Delta \tilde\alpha(u_k-\hat u_k)^2,	
	\end{align}
	whereas for $\pos_k \in \setTplus$ we have
	\begin{align}\label{eq::formulaTopologicalDerivativeMinus}
	d\redPhiObjectiveFunction(\levelset)(\pos_k) = c_1 + 
	\Delta \lambda	\frac{\sum_{l \in \setIk}  \frac{\mathbf p_{l}^\top \mathbf k_{0,l} \mathbf u_{l}\detJl}{\levelset_{l_2}\levelset_{l_3}}}{\sum_{l \in \setIk}\frac{\detJl}{\levelset_{l_2}\levelset_{l_3}} }	+\Delta \alpha p_ku_k	-\Delta f p_k + c_2\Delta \tilde\alpha(u_k-\hat u_k)^2,
	\end{align}
	with 
	\begin{align*}
	\Delta \alpha = \alpha_1-\alpha_2, \quad
	\Delta \lambda = \lambda_1-\lambda_2, \quad
	\Delta f = f_1-f_2, \quad
	\Delta \tilde\alpha = \tilde\alpha_1-\tilde\alpha_2 .
	\end{align*}
	For $\pos_k \in \setS$ the numerical shape derivative reads
	\begin{equation}\label{eq::formulaShapeDerivative}
	\begin{aligned}
	d\redPhiObjectiveFunction(\levelset)(\pos_k) = -c_1 + & 
	\Delta \lambda
	\frac{\sum_{l \in C_k}  \mathbf p_{l}^\top \mathbf k_{0,l} \mathbf u_{l} \,d_k a_{l}}{d_k\tilde a}
	+\Delta \alpha\frac{\sum_{l \in C_k}  \mathbf p_{l}^\top d_k\mathbf m^I_{l} \,\mathbf u_{l}}{d_k\tilde a} 
	  \\
	 &  -\Delta f \frac{\sum_{l \in C_k} \mathbf p_{l}^\top d_k\mathbf f_l^I }{d_k\tilde a}
	+c_2\Delta \tilde\alpha \frac{\sum_{l \in C_k}  (\mathbf u_{l}-\hat{\mathbf u}_{l})^\top d_k\mathbf m^I_{l} \,(\mathbf u_{l}-\hat{\mathbf u}_{l})}{d_k\tilde a},
	\end{aligned}
	\end{equation}
	where the entries of the element matrix $d_k\mathbf m_{l}^I$ and of the element vector $d_k\mathbf f_{l}^I$ are dependent on the local cut situation (cases  $I=A^\pm$, $B^\pm$, $C^\pm$) and are given in Appendix \ref{app::shapeDerivative}. The values $d_k \tilde a$ can be computed by \eqref{eq::derivativeArea} considering Remark \ref{remark::area}.
	\black
\end{theorem}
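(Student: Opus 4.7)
\medskip

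\textbf{Proof plan.}
The plan is to start from the identity \eqref{eq::derivativeTracking} derived via the Lagrangian approach together with Lemma \ref{lem_uepsu0}, and to evaluate each of the limits in \eqref{eq_derivativeLimits} element-by-element, exploiting that for small $\eps$ the perturbed and unperturbed integrands differ only on the cut pieces of triangles adjacent to $\pos_k$. More precisely, since the matrices $\Amat=\MM+\KK$, $\tilde\MM$ and the vector $\ff$ are assembled from element contributions and only the elements $\tau_l$ with $l \in C_k$ see a change in the subdomain $\Omega(\levelset)$, one may write
\begin{align*}
d_k\Amat = \sum_{l \in C_k} d_k\mathbf a_l, \qquad d_k\tilde\MM = \sum_{l \in C_k} d_k\tilde{\mathbf m}_l, \qquad d_k\ff = \sum_{l \in C_k} d_k\mathbf f^I_l,
\end{align*}
and analogously for $d_k\tilde a = \sum_{l \in C_k} d_k\tilde a_l$ from Remark \ref{remark::area}. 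The crucial observation is that the local stiffness matrix $\mathbf k_l$ factors as $\mathbf k_{0,l} \int_{\tau_l}\lambda_\Omega\,\mathrm d\pos$ since $\mathbf k_{0,l}$ is geometry-only, so the stiffness contribution reduces to the scalar multiplication $d_k\mathbf k_l = \Delta\lambda\,\mathbf k_{0,l}\,d_k a_l$, with $d_k a_l$ already identified in Theorem~\ref{thm::areaDerivative}.

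Next, I would specialise to the three regimes. For $\pos_k \in \setTminus$ (respectively $\setTplus$) we have $\levelset_{l_1}=0$ and the perturbation operator flips only node $\pos_k$, so every $l \in \setIk$ belongs to $I_{\pos_k}^{A_+}$ (respectively $I_{\pos_k}^{A_-}$). In this case the cut region is a small triangle near $\pos_k$ of area $O(\eps^2)$ with the two opposite vertices of the triangle lying strictly away from the interface, so the basis function products $\varphi_i\varphi_j$ and $\varphi_i$ are essentially constant, equal to their value at $\pos_k$ (i.e.\ $\delta_{ik}\delta_{jk}$ and $\delta_{ik}$), on the vanishing cut. This collapses the mass, $\tilde\MM$ and $\ff$ contributions to the diagonal $k,k$-entries involving $u_k$, $p_k$, $\hat u_k$, and the area-ratio factor collapses the stiffness contribution to the weighted mean appearing in \eqref{eq::formulaTopologicalDerivativePlus}--\eqref{eq::formulaTopologicalDerivativeMinus}. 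Combining these with the volume part from Corollary~\ref{cor_dVol} and the sign convention from Remark \ref{remark::area} yields the topological derivative formulas.

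For $\pos_k \in \setS$, all six configurations $A^\pm,B^\pm,C^\pm$ can occur, and cut pieces have area of order $\eps$. Here the basis function products $\varphi_i\varphi_j$ are not constant over the cut subregion, so the element matrices $d_k\mathbf m^I_l$ and vectors $d_k\mathbf f^I_l$ must be obtained by the same reference-element integration technique used for the area calculation in Section~4.1: one parametrises the cut piece via $\ell_1,\ell_2$ (or the analogous variables in configurations $B,C$), expands to leading order in $\eps$, and divides by $\eps$. The sign convention for which side of the interface gains or loses material, recorded via the signs on $d_k a_l$ and $d_k\tilde a_l$ in Remark \ref{remark::area}, produces the minus signs in front of the $B^+,C^+,A^-$ contributions. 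Assembling these element quantities into the quotient \eqref{eq::derivativeTracking} and again adding the volume term $-c_1$ from Corollary \ref{cor_dVol} gives \eqref{eq::formulaShapeDerivative}.

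The main obstacle is the case-by-case bookkeeping in the shape case: each of the six configurations requires its own change of variables on the cut triangle or quadrilateral and the leading-order expansions of $d_k\mathbf m^I_l[i,j]$ and $d_k\mathbf f^I_l[i]$ for every combination of local indices $i,j\in\{1,2,3\}$; these explicit entries are the content of the appendix rather than of the theorem statement itself. All the other manipulations are algebraic consequences of Theorem \ref{thm::areaDerivative}, the factorisation $\mathbf k_l=\mathbf k_{0,l}\int_{\tau_l}\lambda_\Omega$, Lemma \ref{lem_uepsu0} (which kills $R_1$ and $R_2$), and the limit-of-quotient identity used to derive \eqref{eq::derivativeTracking}.
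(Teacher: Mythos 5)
Your proposal follows essentially the same route as the paper's proof: it starts from \eqref{eq::derivativeTracking} (with $R_1,R_2$ killed by Lemma \ref{lem_uepsu0}), evaluates the limits \eqref{eq_derivativeLimits} element-wise over $C_k$, uses the factorization $\mathbf k_l = \mathbf k_{0,l}\int_{\tau_l}\lambda_\Omega\,\mathrm dx$ so that $d_k\mathbf k_l = \Delta\lambda\,\mathbf k_{0,l}\,d_ka_l$, collapses the mass/load/tracking contributions to the $(k,k)$-entries in the topological case (the paper does this via the explicit monomial integrals over the shrinking cut triangle, which is exactly your "basis functions are essentially constant on the vanishing cut" argument made quantitative), and adds the $\pm c_1$ term via Corollary \ref{cor_dVol} and the sign bookkeeping of Remark \ref{remark::area}. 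The only cosmetic difference is that for $\pos_k\in\setS$ you propose deriving the appendix entries $d_k\mathbf m^I_l$, $d_k\mathbf f^I_l$ by hand via reference-element expansions, whereas the paper obtains them by symbolic computation and omits the derivation; this does not change the structure of the argument.
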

\begin{proof}
	We evaluate \eqref{eq::derivativeTracking} for $\pos_k \in \setTminus$ and $\discreteOperator = \Tplus$. Thus, $o=2$ in \eqref{eq_derivativeLimits}. We note that 
	\begin{equation}
	\mathbf p^\top d_k \mathbf A \,\uuzero = \sum_{l=1}^N  \mathbf p_{l}^\top (d_k\mathbf m_{l} +d_k\mathbf k_{l}) \mathbf u_{l}.
	\end{equation}
	We have for $l\in \setIk$
	\begin{align*}
	\mathbf k_{l}(\levelset_{\eps})-\mathbf k_l(\levelset) &= \mathbf k_{0,l} \detJl\bigg(  \int_{\xi_1=0}^1\int_{\xi_2=0}^{1-\xi_1} \lambda_1 H(-\levelset_{\eps}\circ\Phi_l) +\lambda_2 H(\levelset_{\eps}\circ\Phi_l) \dd\xi_2 \dd\xi_1 \\ &\quad-\int_{\xi_1=0}^1\int_{\xi_2=0}^{1-\xi_1} \lambda_1 H(-\levelset\circ\Phi_l) +\lambda_2 H(\levelset\circ\Phi_l) \dd\xi_2 \dd\xi_1\bigg) \\
	&=\mathbf k_{0,l} \detJl\bigg(  \lambda_1\int_{\xi_1=0}^1\int_{\xi_2=0}^{1-\xi_1}  (H(-\levelset_{\eps}\circ\Phi_l) -H(-\levelset\circ\Phi_l))  \dd\xi_2 \dd\xi_1 \\ &\quad+\lambda_2\int_{\xi_1=0}^1\int_{\xi_2=0}^{1-\xi_1}  (H(\levelset_\eps\circ\Phi_l) - H(\levelset\circ\Phi_l)) \dd\xi_2 \dd\xi_1\bigg)
	\\
	&= \mathbf k_{0,l}  (\lambda_1 - \lambda_2)\changeArea_l
	\end{align*}
	due to \eqref{eq::areaChangeB} because $|\Omega_\eps|-|\Omega| = -(|D \setminus \Omega_\eps|-|D \setminus \Omega|)$
	and with \eqref{eq::areaChangeTopAplus} we obtain
	\begin{align}\label{eq::termA}
	d_k\mathbf k_l = \underset{\eps \searrow 0}{\mbox{lim }} \frac{\mathbf k_l(\phi_\eps) - \mathbf k_l(\phi)}{\eps^2} 
	&= -\Delta\lambda\frac{\detJl}{2\levelset_{l_2}\levelset_{l_3}} \mathbf k_{0,l}.
	\end{align}
	Due to 
	\begin{equation*}
	\int_{\xi_1=0}^{l_1} \int_{\xi_2=0}^{l_2\left(1-\frac{\xi_1}{l_1}\right)} 
	\xi_1^a \xi_2^b \dd\xi_2 \dd\xi_1 = \frac{\epsilon^{a+b+2}}{\levelset_{l_2}^{a+1}\levelset_{l_3}^{b+1} + O(\epsilon^{a+b+2})}
	\end{equation*}
	for some $a,b \in \mathbb{N}$, we have
	\begin{equation}\label{eq::limitChangeMass}
	\begin{aligned}
	d_k\mathbf m_l = \lim_{\varepsilon\searrow 0} \frac{\mathbf m_{l}(\levelset_{\eps})-\mathbf m_l(\levelset)}{\varepsilon^2} &= - \lim_{\varepsilon\searrow 0}	 \frac{\Delta\alpha}{\varepsilon^2}
	\int_{\xi_1=0}^{l_1} \int_{\xi_2=0}^{l_2\left(1-\frac{\xi_1}{l_1}\right)} 
	\psi_i(\xi)\psi_j(\xi) \detJl\dd\xi_2 \dd\xi_1 \\ &= 
	\begin{bmatrix}
	-\frac{\Delta\alpha\detJl}{2\levelset_{l_2}\levelset_{l_3}} & 0 & 0 \\
	0 & 0 & 0 \\
	0 & 0 & 0 
	\end{bmatrix},
	\end{aligned}
	\end{equation}
	and conclude
	\begin{align}\label{eq::termB}
	\sum_{l=1}^N \mathbf p_{l}^\top d_k\mathbf m_{l}  \mathbf u_{l} = -\Delta\alpha p_k u_k\sum_{l \in \setIk}	\frac{\detJl}{2\levelset_{l_2}\levelset_{l_3}}.
	\end{align}
	Furthermore, with 
	\begin{equation}
	\begin{aligned}\label{eq::limitChangeLoad}
	d_k\mathbf f_l =\lim_{\varepsilon\searrow 0} \frac{\mathbf f_{l}(\levelset_{\eps})-\mathbf f_l(\levelset)}{\varepsilon^2} &= - \lim_{\varepsilon\searrow 0}	 \frac{\Delta f}{\varepsilon^2}
	\int_{\xi_1=0}^{l_1} \int_{\xi_2=0}^{l_2\left(1-\frac{\xi_1}{l_1}\right)} 
	\psi_i(\xi) \detJl\dd\xi_2 \dd\xi_1 \\ &= 
	\begin{bmatrix}
	-\frac{\Delta f\detJl}{2\levelset_{l_2}\levelset_{l_3}}  \\
	0  \\
	0  
	\end{bmatrix},
	\end{aligned}
	\end{equation}
	it follows that
	\begin{equation}\label{eq::termC}
	 \mathbf p^\top d_k\mathbf f = \sum_{l=1}^N  \mathbf p_{l}^\top d_k\mathbf f_{l} = -\Delta f p_k \sum_{l \in \setIk}	\frac{\detJl}{2\levelset_{l_2}\levelset_{l_3}}.
	\end{equation}
	Analogously to \eqref{eq::limitChangeMass} we have
	\begin{equation}\label{eq::limitChangeTracking}
	\begin{aligned}
	d_k\tilde{\mathbf  m}_l  = \lim_{\varepsilon\searrow 0} \frac{\tilde{\mathbf m}_{l}(\levelset_{\eps})-\tilde{\mathbf  m}_l(\levelset)}{\varepsilon^2} = 
	\begin{bmatrix}
	-\frac{\Delta\tilde\alpha\detJl}{2\levelset_{l_2}\levelset_{l_3}} & 0 & 0 \\
	0 & 0 & 0 \\
	0 & 0 & 0 
	\end{bmatrix},
	\end{aligned}
	\end{equation}
	and obtain
	\begin{equation}\label{eq::termD}
	\begin{aligned}
	(\uuzero - \hat \uu)^\top d_k\tilde \MM  (\uuzero - \hat \uu) &= \sum_{l \in \setIk} (\uu_{0,l} - \hat \uu_l)^\top d_k\tilde {\mathbf m}_l  (\uu_{0,l} - \hat \uu_l) \\ &= -\Delta \tilde\alpha(u_k-\hat u_k)^2 \sum_{l \in \setIk}	\frac{\detJl}{2\levelset_{l_2}\levelset_{l_3}}.	
	\end{aligned}
	\end{equation}
	In the present situation, $d_k\tilde a$ is given by the absolute value of \eqref{eq::areaChangeTopAplus} (see also Remark \ref{remark::area}),
	\begin{equation}\label{eq::changeAreaProof}
	d_k\tilde a = \sum_{l \in I_{\pos_k}^{A_+}}\frac{\detJl}{2\levelset_{l_2}\levelset_{l_3}}.
	\end{equation}
	By inserting \eqref{eq::termA}, \eqref{eq::termB}, \eqref{eq::termC}, \eqref{eq::termD}, and \eqref{eq::changeAreaProof} in \eqref{eq::derivativeTracking}, together with Corollary \ref{cor_dVol}, we obtain the sought expression \eqref{eq::formulaTopologicalDerivativePlus}. Formula \eqref{eq::formulaTopologicalDerivativeMinus} can be obtained in an analogous way as \eqref{eq::formulaTopologicalDerivativePlus}.
	
	The formula in \eqref{eq::formulaShapeDerivative} follows directly from \eqref{eq::derivativeTracking} together with Corollary \ref{cor_dVol}. The values of $d_k \mathbf m^I_l$ and $d_k \mathbf f^I_l$ for all possible cut situations $I \in \{A^+, A^-, B^+, B^-, C^+, C^-\}$ are given in Appendix \ref{app::shapeDerivative} and were computed using symbolic computer algebra tools. A mathematical derivation of these terms is omitted here for brevity.
\end{proof}


\section{Connection between continuous and discrete sensitivities} \label{sec::connection}

The \topshapeDerivative derivative introduced in \eqref{eq::topDerivative} and computed for model problem \eqref{eq::ContinuousProblem} in Theorem \ref{thm_numTopShapeDer_formula} represents a sensitivity of the discretized problem \eqref{eq_discr_opti_problem}. In this section, we draw some comparisons with the classical topological and shape derivatives defined on the continuous level before discretization. While the purpose of this paper is to follow the idea \textit{discretize-then-differentiate}, we consider the other way here for comparison reasons.

\subsection{Connections between continuous and discrete topological derivative} \label{sec_connTD}
For comparison, we also illustrate the derivation of the continuous topological derivative according to \eqref{eq_defTD} for problem \eqref{eq::ContinuousProblem}. We use the same Lagrangian framework as introduced in Section \ref{sec::Lagrangian}, see also \cite{simplified}. Given a shape $\Omega \in \mathcal A$, a point $z \in D \setminus \partial \Omega$, an inclusion shape $\omega \subset \mathbb R^d$ with $0 \in \omega$ and $\eps \geq 0$, we define the inclusion $\omega_\eps = z + \eps \omega$ and the perturbed Lagrangian
\begin{align*}
    G(\eps, \varphi, \psi) := c_1 |\Omega_\eps| + c_2 \int_D \tilde{\alpha}_{\Omega_\eps} | \varphi - \targetU|^2\; \mbox dx + \int_D \lambda_{\Omega_\eps} \nabla \varphi \cdot \nabla \psi + \alpha_{\Omega_\eps} \varphi  \psi \;\mbox dx - \int_D f_{\Omega_\eps} \psi \; \mbox dx
\end{align*}
where $\Omega_\eps = \Omega \setminus \omega_\eps$ for $z \in \Omega$ and $\Omega_\eps = \Omega \cup \omega_\eps$ for $z \in D \setminus \overline \Omega$. For simplicity, we only consider the latter case, \ie $z \in D \setminus \overline \Omega$ in the sequel.

Noting that $u_\eps$, $\eps \geq 0$, is the solution to the perturbed state equation with parameter $\eps$, the topological derivative can also be written as
\begin{align}
   d \mathcal J(\Omega)(z) =  \underset{\eps \searrow 0}{\mbox{lim }}\frac{1}{|\omega_\eps|} (G(\eps, u_\eps, p) - G(0, u_0, p))
\end{align}
with the solution to the unperturbed adjoint state equation $p$. As in Section \ref{sec::Lagrangian}, this leads to the topological derivative consisting of the three terms
\begin{align*}
d \redPhiObjectiveFunction(\levelset)(z) = R_1(u, p) + R_2(u, p) + R_0(u, p)
\end{align*}
where
\begin{align*}
R_1(u, p) :=& \underset{\eps \searrow 0}{\mbox{lim }} \frac{1}{|\Omega_\eps \symmDiff \Omega|}  \int_0^1 [\partial_u G(\eps, u_0 + s(u_\eps - u_0), p) - \partial_u G(\eps, u, p)](u_\eps - u_0) \mbox ds, \\
R_2(u, p) :=& \underset{\eps \searrow 0}{\mbox{lim }} \frac{1}{|\Omega_\eps \symmDiff \Omega|}[ \partial_u G(\eps, u, p) - \partial_u G(0, u, p)](u_\eps - u), \\
R_0(u, p) := & \underset{\eps \searrow 0}{\mbox{lim }} \frac{1}{|\Omega_\eps \symmDiff \Omega|} [ G(\eps, u, p) - G(0, u, p)],
\end{align*}
provided that these limits exist.
It is straightforwardly checked that for $z \in D \setminus \overline \Omega$
\begin{align*}
    R_0(u,p) = c_1 + c_2 (\tilde \alpha_1 - \tilde \alpha_2) (u - \targetU)^2(z) + (\lambda_1 - \lambda_2) \nabla u(z) \cdot \nabla p(z) + (\alpha_1 - \alpha_2) u(z) p(z) - (f_1-f_2)(z) p(z).
\end{align*}

For the term $R_2(u,p)$, we obtain
\begin{align*}
    R_2(u,p) 
    =& \underset{\eps \searrow 0}{\mbox{lim }} \frac{1}{|\omega_\eps|} \bigg[ 2 c_2 \int_{\omega_\eps} (\tilde \alpha_{1} - \tilde \alpha_2) (u_0 - \targetU)(u_\eps - u_0) \; dy+  \int_{\omega_\eps} (\lambda_{1} - \lambda_2) \nabla (u_\eps - u_0) \cdot \nabla p \; dy \\
    & \qquad + \int_{\omega_\eps} (\alpha_{1} - \alpha_2) (u_\eps - u_0) p \; dy 
    \bigg].
\end{align*}
A change of variables $y = T_\eps(x) = z + \eps x$ yields
\begin{align} \label{eq_R2omega}
    \begin{aligned}
 R_2(u,p) =&  \underset{\eps \searrow 0}{\mbox{lim }} \frac{1}{|\omega|} \bigg[  2 c_2 (\tilde \alpha_{1} - \tilde \alpha_2)\int_{\omega}  (u_0 - \targetU)\circ T_\eps (u_\eps - u_0) \circ T_\eps  \; dx \\ & +  (\lambda_{1} - \lambda_2) \int_{\omega}  (\nabla (u_\eps - u_0) )\circ T_\eps \cdot (\nabla p)\circ T_\eps \; dx  + (\alpha_{1} - \alpha_2) \int_{\omega}  (u_\eps - u_0)\circ T_\eps \, p\circ T_\eps \; dx \bigg] 
    \end{aligned}
\end{align}

We have a closer look at the diffusion term 
\begin{align} \label{eq_R2lambdaDef}
    R_2^\lambda(u,p):= \underset{\eps \searrow 0}{\mbox{lim }} \frac{1}{|\omega|} (\lambda_1 - \lambda_2)\int_\omega (\nabla(u_\eps - u_0))\circ T_\eps \cdot (\nabla p)\circ T_\eps \; \mbox dx.
\end{align}
In the continuous setting, we now define $K_\eps := \frac{1}{\eps}(u_\eps - u_0)\circ T_\eps$ and use the chain rule $(\nabla \varphi )\circ T_\eps = \frac1\eps \nabla (\varphi \circ T_\eps)$ to obtain
\begin{align}
    R_2^\lambda (u,p)=& \underset{\eps \searrow 0}{\mbox{lim }} \frac{1}{|\omega|} (\lambda_1 - \lambda_2) \int_\omega \nabla K_\eps \cdot (\nabla p) \circ T_\eps \; \mbox dx
\end{align}
Next, one can show the weak convergence $\nabla K_\eps \rightharpoonup \nabla K$ for $K \in \BL$ being defined as the solution to the exterior problem
\begin{align*}
    \int_{\mathbb R^d} \lambda_\omega \nabla K \cdot \nabla \psi \; \mbox dx = -(\lambda_1 - \lambda_2) \int_\omega \nabla u(z) \cdot \nabla \psi \; \mbox dx \quad \mbox{for all }\psi \in \BL,
\end{align*}
where $\BL := \{v \in H^1_{loc}(\mathbb R^2): \nabla v \in L^2(\mathbb R^2) \}/_{\mathbb R}$ is a Beppo-Levi space.
Assuming continuity of $\nabla p$ around the point of perturbation $z$,
it follows that 
\begin{align} \label{eq_R2lambda}
    R_2^\lambda(u,p) = \frac{1}{|\omega|} (\lambda_1 - \lambda_2) \int_\omega \nabla K \cdot \nabla p(z)\; \mbox dx.
\end{align}
It can be shown that the other terms in \eqref{eq_R2omega} vanish and thus $R_2(u,p) = R_2^\lambda(u,p)$.
Finally, it follows from the analysis in \cite[Sec. 5]{simplified} that $R_1(u,p) + R_2(u,p) = \frac{1}{|\omega|}(\lambda_1 - \lambda_2) \int_\omega \nabla K \cdot \nabla p(z) \, \mbox dx = R_2(u,p)$, thus $R_1(u,p) = 0$ and $d \redPhiObjectiveFunction(\Omega)(z)= R_0(u,p) + R_2^\lambda(u,p)$.

\begin{remark}
Comparing the topological derivative formula obtained here with the sensitivity for interior nodes $\pos_k \in \setTminus \cup \setTplus$ obtained in Section \ref{sec::numTopShapeDer}, we see that the term corresponding to $R_2^\lambda(u,p)$, i.e., the term
\begin{align*}
        \underset{\eps \searrow 0}{\mbox{lim }} \frac{1}{|\Omega(\levelset_\eps)\symmDiff \Omega(\levelset)|}(\KK_\eps - \KK_0) (\uu_\eps  - \uu_0) \cdot \pp
\end{align*}
in \eqref{eq_R2discr}, vanishes in the discrete setting. This can be seen as follows:
For $u_\eps^h$, $\eps \geq 0$, and $p^h \in V_h$, we have the expansion in the finite element basis
\begin{align*}
    u_\eps^h(x) = \sum_{i=1}^M u_\eps^{(i)} \varphi_i , \qquad p^h(x) = \sum_{i=1}^M p^{(i)} \varphi_i .
\end{align*}
If we now plug in these discretized functions into \eqref{eq_R2lambdaDef} and consider a fixed mesh size $h$, we get on the other hand
\begin{align*}
    R_2^\lambda(u^h, p^h) = &  \underset{\eps \searrow 0}{\mbox{lim }} \frac{1}{|\omega|} (\lambda_1 - \lambda_2) \int_\omega (\nabla(u_\eps^h - u_0^h))\circ T_\eps \cdot (\nabla p^h)\circ T_\eps \; \mbox dx \\
    =&  \underset{\eps \searrow 0}{\mbox{lim }} \frac{1}{|\omega|}(\lambda_1 - \lambda_2) \sum_{i,j=1}^M (u_\eps^{(i)}-u_0^{(i)})p^{(j)}   \int_\omega (\nabla \varphi_i)(z+\eps x) \cdot (\nabla \varphi_j)(z+\eps x) \; \mbox dx = 0
\end{align*}
where we used the continuity of $\eps \mapsto \uu_\eps$ according to Lemma \ref{lem_uepsu0}. Note that, since the mesh and the basis functions are assumed to be fixed and independent of $\eps$, unlike in the continuous setting, here the continuity of the normal flux of the discrete solution $u_\eps^h$ across the interface $\partial \omega_\eps$ is not preserved. We mention that, when using an extended discretization technique that accounts for an accurate resolution of the material interface (e.g. XFEM \cite{MoesDolbowBelytschko1999} or CutFEM \cite{BurmanClausHansboLarsonMassing2014}), the corresponding discrete sensitivities would include a term corresponding to $R_2^\lambda(u,p)$.
\end{remark}

\subsection{Connection between continuous and discrete shape derivative}
The continuous shape derivative $dg(\Omega)(V)$ for a PDE-constrained shape optimization problem given a shape $\Omega \in \mathcal A$ and a smooth vector field $V$ can also be obtained via a Lagrangian approach. For our problem \eqref{eq::ContinuousProblem}, it can be obtained as $dg(\Omega)(V) = \partial_t G(0, u, p)$ with
\begin{align*}
    G(t, \varphi, \psi) :=& c_1 \int_\Omega \xi(t) \; \mbox dx + c_2 \int_D \tilde{\alpha}_{\Omega}  | \varphi - \targetU\circ T_t|^2 \xi(t) \; \mbox dx \\
    &+ \int_D \lambda_{\Omega} A(t) \nabla \varphi \cdot \nabla \psi + \alpha_{\Omega}  \varphi  \psi \xi(t) \;\mbox dx- \int_D f_{\Omega} \psi   \xi(t)\; \mbox dx
\end{align*}
where $T_t(x) = x + t V(x)$, $\xi(t) = \mbox{det}(\partial T_t)$, $A(t) = \xi(t) \partial T_t^{-1} \partial T_t^{-T}$,
see \cite{Sturm2015} for a detailed description. In the continuous setting, the shape derivative reads in its volume form
\begin{align*}
    d g(\Omega)(V) = \int_D \mathcal S_1^{\Omega} : \partial V + \mathcal S_0^\Omega \cdot V \; \mbox dx
\end{align*}
with $\mathcal S_1^\Omega$ and $\mathcal S_0^\Omega$ given in \eqref{eq_S1} and \eqref{eq_S0}, respectively. Under certain smoothness assumptions, it can be transformed into the Hadamard or boundary form
\begin{align} \label{eq_dg_Hadamard}
    d g(\Omega)(V) = \int_{\partial \Omega} L \, (V\cdot n) \; \mbox dS_x
\end{align}
with $L =( \mathcal S_1^{\Omega, \text{in}} - \mathcal S_1^{\Omega, \text{out}})n \cdot n$ given by
\begin{align} \label{eq_L}
    L = c_1 + c_2(\tilde \alpha_1 - \tilde \alpha_2) (u - \targetU)^2 + (\alpha_1 - \alpha_2) u p - (f_1 - f_2) p + L^\lambda
\end{align}
where $L^\lambda$ is given by
\begin{align}
    L^\lambda :=& (\lambda_1- \lambda_2)  (\nabla u \cdot \tau)(\nabla p \cdot\tau) -  \left(\frac{1}{\lambda_1} - \frac{1}{\lambda_2}\right)( \lambda_\Omega \nabla u \cdot n)(  \lambda_\Omega \nabla p \cdot n) \nonumber \\
    =&\lambda_1 \nabla u_{\text{in}} \cdot \nabla p_{\text{in}} -  \lambda_2 \nabla u_{\text{out}} \cdot \nabla p_{\text{out}} - 2 \lambda_1 (\nabla u_{\text{in}}\cdot n) ( \nabla p_{\text{in}} \cdot n) + 2 \lambda_2 (\nabla u_{\text{out}}\cdot n) ( \nabla p_{\text{out}} \cdot n). \label{eq_Llambda_inout}
\end{align}
Here, $\nabla u_{\text{in}}, \nabla p_{\text{in}}$ and $\nabla u_{\text{out}}, \nabla p_{\text{out}}$ denote the restrictions of the gradients to $\Omega$ and $D \setminus \overline \Omega$, respectively, and $n$ denotes the unit normal vector pointing out of $\Omega$.
Note that, when using a finite element discretization which does not resolve the interface such that the gradients of the discretized state and adjoint variable are continuous, i.e. $\nabla u_{h, \text{in}} = \nabla u_{h, \text{out}}$ and $\nabla p_{h, \text{in}} = \nabla p_{h, \text{out}}$, \eqref{eq_Llambda_inout} becomes
\begin{align} \label{eq_Llambdah}
    L^\lambda_h = (\lambda_1 - \lambda_2) \nabla u_h \cdot \nabla p_h - 2(\lambda_1 - \lambda_2) (\nabla u_h \cdot n)(\nabla p_h \cdot n)
\end{align}

We now discretize the continuous shape derivative formula \eqref{eq_dg_Hadamard} for the vector field $V^{(k)}$ that is obtained from the perturbation of the level set function $\levelset$ in (only) node $\pos_k$, $k \in \setS$ fixed. For that purpose we fix $\phi \in S_h^1(D)$ and the corresponding domain $\Omega(\phi)$. Note that we consider $V^{(k)}$ to be supported only on the discretized material interface $\partial \Omega(\phi) \cap D$. We begin with the case of the pure volume cost function by setting $c_2=0$.

\begin{proposition} \label{prop_SD_vol}
    Let $c_2=0$ and $\pos_k \in \setS$ fixed. Let $V^{(k)}$ the vector field that corresponds to a perturbation of the value of $\levelset$ at position $\pos_k$. Then
    \begin{align*}
        dg(\Omega(\phi))(V^{(k)}) = c_1 \sum_{l \in C_k} d_ka_l
    \end{align*}
    where $d_k a_l$ is as in \eqref{eq::derivativeArea}.

\end{proposition}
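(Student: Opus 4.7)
The plan is to invoke the Hadamard (boundary) form of the classical shape derivative. Since $c_2=0$, the only surviving term in $L$ from \eqref{eq_L} is $L = c_1$, so the continuous shape derivative simplifies to
\begin{equation*}
    dg(\Omega(\phi))(V^{(k)}) = c_1 \int_{\partial \Omega(\phi) \cap D} V^{(k)} \cdot n \; \mbox dS_x.
\end{equation*}
Hence the task reduces to showing that this boundary integral equals $\sum_{l \in C_k} d_k a_l$. Both the left- and right-hand sides should be first-order changes of $|\Omega(\phi)|$ under the perturbation, so the strategy is to identify them through the common quantity $\lim_{\eps \searrow 0} (|\Omega(\phi + \eps \varphi_k)| - |\Omega(\phi)|)/\eps$.

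Next I would identify the vector field $V^{(k)}$ associated with the level-set perturbation $\phi \mapsto \phi + \eps \varphi_k$. Differentiating the implicit relation $(\phi + \eps \varphi_k)(x(\eps)) = 0$ at $\eps = 0$ for a moving interface point $x(\eps)$ gives $\nabla \phi(x(0)) \cdot \dot x(0) + \varphi_k(x(0)) = 0$. Choosing $\dot x(0)$ in the normal direction and using $n = \nabla \phi / |\nabla \phi|$ (consistent with $\phi<0$ in $\Omega$) yields the normal interface velocity $V^{(k)} \cdot n = -\varphi_k/|\nabla \phi|$ on $\partial \Omega(\phi) \cap D$. This characterizes $V^{(k)}$ on the polygonal interface up to tangential components, which do not contribute to the Hadamard integral.

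With $V^{(k)}$ so identified, the classical shape-derivative formula for the volume functional gives $\int_{\partial \Omega(\phi)} V^{(k)} \cdot n \; \mbox dS_x = \lim_{t \searrow 0} (|\Omega_t| - |\Omega(\phi)|)/t$ where $\Omega_t = (\mbox{id} + t V^{(k)})(\Omega(\phi))$. On the other hand, by \eqref{eq::areaChange}--\eqref{eq::areaChangeB} and the definition of $d_k a_l$, $\sum_{l \in C_k} d_k a_l = \lim_{\eps \searrow 0} (|\Omega(\phi + \eps \varphi_k)| - |\Omega(\phi)|)/\eps$. The key identity is that these two limits coincide, which I would establish elementwise: on each cut element $\tau_l$ with $l \in C_k$, the functions $\phi$ and $\varphi_k$ are affine and the interface $\partial \Omega(\phi) \cap \tau_l$ is a line segment on which $V^{(k)} \cdot n$ has the closed form $-\varphi_k/|\nabla \phi|$. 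Integrating this explicitly over the segment and matching against the elementwise formula for $d_k a_l$ from Theorem \ref{thm::areaDerivative} (cases $A^\pm$, $B^\pm$, $C^\pm$) gives the contribution from $\tau_l$. Summing over $l \in C_k$ yields the result.

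The main obstacle is the non-smoothness of $\partial \Omega(\phi)$ and of the effective transformation produced by the level-set perturbation: the continuous shape derivative is stated for smooth vector fields $V \in C_c^\infty(D)$, whereas $V^{(k)}$ is only naturally defined on the interface and the domain transformation induced by $\phi + \eps \varphi_k$ is piecewise affine. I would circumvent this by performing the elementwise verification described above (which only relies on explicit affine geometry on each $\tau_l$), rather than by trying to directly apply a smoothness-requiring Hadamard theorem to $V^{(k)}$. Alternatively, one can extend $V^{(k)}$ to a smooth field in $D$ with the prescribed normal trace on $\partial \Omega(\phi)$ and invoke the fact that \eqref{eq::shapeDerivative_boundary} depends only on $V \cdot n$ along $\partial \Omega$.
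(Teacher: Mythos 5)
Your proposal is correct and follows essentially the same route as the paper: reduce to the Hadamard form with $L=c_1$ (since $c_2=0$ forces $p=0$), observe that $V^{(k)}$ contributes only on $\tau_l\cap\partial\Omega(\phi)$ for $l\in C_k$, compute $\int_{\tau_l\cap\partial\Omega(\phi)}V^{(k)}\cdot n\,\mathrm{d}S_x$ element by element, and match the result against the formulas for $d_ka_l$ in \eqref{eq::derivativeArea}. The one real difference is how the normal velocity is obtained: the paper parametrizes the perturbed interface through the roots $\yy(\pos_{l_1},\pos_{l_j},\phi_{l_1}+\eps,\phi_{l_j})$, differentiates in $\eps$ to get $\hat V^{A},\hat V^{B},\hat V^{C}$, and dots them with explicitly constructed normals for each cut configuration $A^\pm,B^\pm,C^\pm$, whereas you read off $V^{(k)}\cdot n=-\varphi_k/|\nabla\phi|$ (with $n=\nabla\phi/|\nabla\phi|$, the correct sign for $\phi<0$ in $\Omega$) from implicit differentiation of the level-set relation; since $\phi$ and $\varphi_k$ are affine on each cut element, integrating this along the interface segment and comparing with $d_ka_l$ (equivalently, noting that both quantities equal $\lim_{\eps\searrow 0}\delta_{k,\eps}a_l/\eps$) is an exact finite computation, so your shortcut is legitimate and arguably cleaner than re-deriving the velocity field per configuration. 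Your concern about the non-smoothness of $\partial\Omega(\phi)$ and of $V^{(k)}$ is handled the same way the paper handles it, namely by evaluating the boundary expression directly on the polygonal interface segment by segment rather than invoking the structure theorem for smooth data, so no gap remains provided you actually carry out the segmentwise integrations (or the equivalent coarea-type identification with $d_ka_l$) that you sketch.
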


\begin{proof}
    For $c_2=0$ we also have $p=0$ and thus $L = c_1$, i.e., we are in the case of pure volume minimization. From \eqref{eq_dg_Hadamard} we know that $dg(\Omega(\phi))(V^{(k)}) = c_1 \int_{\partial \Omega(\phi)} V^{(k)} \cdot n \; \mbox dS_x$.
    First of all, we note that the vector field $V^{(k)}$ corresponding to a perturbation of $\levelset$ at node $x_k$ is only nonzero in elements $\tau_l$ for $l \in C_k$ with $C_k$ as defined in \eqref{eq::areaChangeSet}. Thus, the shape derivative reduces to
    \begin{align*}
        dg(\Omega(\phi))(V^{(k)}) = c_1 \sum_{l \in C_k} \int_{\tau_l \cap \partial \Omega(\phi)} V^{(k)}\cdot n \; \mbox dS_x.
    \end{align*}
    We compute the vector field $V^{(k)}$ and normal vector $n$ explicitly depending on the cut situation. Recall the sets $\setIk^{A_+}, \setIk^{A_-}, \setIk^{B_+}, \setIk^{B_-}, \setIk^{C_+}, \setIk^{C_-}$ introduced in \eqref{eq_def_Ixk_ABCpm}, see also Figure \ref{fig::sets}. 
    
    Given two points $\pp$ and $\qq$ and their respective level set values $a$ and $b$ of different sign, $ab<0$, we denote the root of the linear interpolating function by
    \begin{align*}
        \yy(\pp, \qq, a, b) = \pp -  \frac{a}{b-a}(\qq - \pp)
    \end{align*}
    and note that $\frac{d}{da} \yy(\pp, \qq, a, b) =  -  \frac{b}{(b-a)^2}(\qq - \pp)$.

    We begin with Configuration A. For an element index $l \in \setIk^{A_+}\cup \setIk^{A_-}$, we denote the vertices of element $\tau_l$ by $\pos_{l_1}, \pos_{l_2}, \pos_{l_3}$ and assume their enumeration in counter-clockwise order with $\pos_k = \pos_{l_1}$. The corresponding values of the given level set function $\phi$ are denoted by $\phi_{l_1}, \phi_{l_2}, \phi_{l_3}$, respectively. We parametrize the line connecting the two roots of the perturbed level set function along the edges by
    \begin{align*}
        p^A[\eps](s) = (1-s) \yy(\pos_{l_1}, \pos_{l_2}, \phi_{l_1}+\eps, \phi_{l_2}) +  s \yy(\pos_{l_1}, \pos_{l_3}, \phi_{l_1}+\eps,\phi_{l_3})
    \end{align*}
    and obtain the vector field corresponding to the perturbation of $\phi_{k} = \phi_{l_1}$ along the line $\tau_l \cap \partial \Omega(\phi)$ as
    \begin{align*}
        \hat V^A(s) = \frac{d}{d\eps} p^A[\eps](s)|_{\eps = 0} = (1-s) \frac{- \phi_{l_2}}{(\phi_{l_2}-\phi_{l_1})^2} (\pos_{l_2} - \pos_{l_1}) + s \frac{- \phi_{l_3}}{ (\phi_{l_3} - \phi_{l_1})^2} (\pos_{l_3}-\pos_{l_1}).
    \end{align*}
    Introducing the notation $d_{ki,kj} := |\yy(\pos_{l_k}, \pos_{l_i}, \phi_{l_k}, \phi_{l_i})-\yy(\pos_{l_k}, \pos_{l_j}, \phi_{l_k}, \phi_{l_j}) |$ for the length of the interface in element $\tau_l$, the normed tangential vector along $\tau_l \cap \partial \Omega(\phi)$ and the normed normal vector pointing out of $\Omega(\phi)$ are given by
    \begin{align*}
        \hat t^A =& \frac{\yy(\pos_{l_1}, \pos_{l_3}, \phi_{l_1}, \phi_{l_3})  - \yy(\pos_{l_1}, \pos_{l_2}, \phi_{l_1}, \phi_{l_2}) }{d_{13,12}} \\
        \hat n^{A_+} =& \frac{\phi_{l_1}}{d_{13,12}} \left( \frac{1}{\phi_{l_2} - \phi_{l_1}} R (\pos_{l_2} - \pos_{l_1})- \frac{1}{\phi_{l_3} - \phi_{l_1}} R (\pos_{l_3} - \pos_{l_1}) \right)
        =-\hat n^{A_-} 
    \end{align*}
    where $R$ denotes a 90 degree counter-clockwise rotation matrix, $R = \begin{pmatrix} 0& -1\\ 1 & 0 \end{pmatrix}$. Noting that $(\pos_{l_3} - \pos_{l_1})^\top R (\pos_{l_2} - \pos_{l_1}) = |\mbox{det}J_l| = - (\pos_{l_2} - \pos_{l_1})^\top R (\pos_{l_3} - \pos_{l_1})$ and $(\pos_{l_j} - \pos_{l_1})^\top R (\pos_{l_j} - \pos_{l_1}) = 0$, $j=2,3$, we get
    \begin{align} \label{eq_VAnA}
        \hat V^A(s) \cdot \hat n^{A_+} = -\frac{|\mbox{det} J_l| \phi_{l_1}}{d_{13,12}} \frac{\phi_{l_2}(\phi_{l_3}-\phi_{l_1}) + s \phi_{l_1}(\phi_{l_2} - \phi_{l_3})}{(\phi_{l_2}-\phi_{l_1})^2(\phi_{l_3}-\phi_{l_1})^2} = - \hat V^A(s) \cdot \hat n^{A_-} .
    \end{align}
    Finally, by elementary computation we obtain for $l \in \setIk^{A_+}$
    \begin{align*}
        \int_{\tau_l \cap \partial \Omega(\phi)} V^{(k)} \cdot n \; \mbox dS_x =& d_{13,12} \int_0^1 \hat V^A(t) \cdot \hat n^{A_+} \; \mbox dt \\
        =&| \mbox{det} J_l| \phi_{l_1} \frac{-\phi_{l_2}\phi_{l_3} + \frac12 \phi_{l_1}(\phi_{l_2}+ \phi_{l_3})}{(\phi_{l_2}-\phi_{l_1})^2(\phi_{l_3}-\phi_{l_1})^2} 
    \end{align*}
    and the same formula with a different sign for $l \in \setIk^{A_-}$. Proceeding analogously, we obtain for $l \in \setIk^{B_+}$ and $l \in \setIk^{C_+}$
    \begin{align}
        \hat V^B(s) \cdot \hat n^{B_+} =& \frac{|\mbox{det} J_l|}{d_{23,21}} \frac{(1-s) \phi_{l_2}^2 }{(\phi_{l_2} - \phi_{l_1})^2 (\phi_{l_3} - \phi_{l_2})} = - \hat V^B(s) \cdot \hat n^{B_-}, \\
        \hat V^C(s) \cdot \hat n^{C_+} = & \frac{|\mbox{det} J_l|}{d_{31,32}} \frac{(1-s) \phi_{l_3}^2}{(\phi_{l_3}-\phi_{l_1})^2 (\phi_{l_2}-\phi_{l_3})} = -\hat V^C(s) \cdot \hat n^{C_-},    \label{eq_VCnC}
    \end{align}
    and further
    \begin{align*}
         \int_{\tau_l \cap \partial \Omega(\phi)} V^{(k)} \cdot n \; \mbox dS_x=& \frac{|\mbox{det}J_l|}{2} \frac{\phi_{l_2}^2}{(\phi_{l_2}-\phi_{l_1})^2(\phi_{l_3}-\phi_{l_2})}, \quad l \in \setIk^{B_+}, \\
        \int_{\tau_l \cap \partial \Omega(\phi)} V^{(k)} \cdot n \; \mbox dS_x =& \frac{|\mbox{det}J_l|}{2} \frac{-\phi_{l_3}^2}{(\phi_{l_3}-\phi_{l_1})^2(\phi_{l_3}-\phi_{l_2})}, \quad l \in \setIk^{C_+},
    \end{align*}
    respectively. Again, the formulas for $l \in \setIk^{B_-}$ and $l \in \setIk^{C_-}$ just differ by a different sign.
    
    Finally, comparing the computed values with the formulas of $d_k a_l$ \eqref{eq::derivativeArea} yields the claimed result.
\end{proof}

In view of Proposition \ref{prop_SD_vol}, the definition in \eqref{eq::shapeDerivativeAlternative} and Remark \ref{remark::area}, we see that, in the case $c_2=0$, it holds
\begin{align}
    \hat d g(\Omega(\phi))(V^{(k)}) =  \frac{c_1 \sum_{l\in C_k} d_ka_l}{\sum_{l\in C_k} d_k \tilde a_l} = - c_1,
\end{align}
which is in alignment with the first term of the formula in \eqref{eq::formulaShapeDerivative}.

Next, we consider the general PDE-constrained case where $c_2>0$.
\begin{proposition} \label{prop_SD_pde}
 Let $c_1 = 0$ and $\pos_k \in \setS$ fixed. Let $V^{(k)}$ the vector field that corresponds to a perturbation of the value of $\phi$ at position $\pos_k$. Then
 \begin{align}  \label{eq_formula_dg_pde}
  \begin{aligned}
    dg(\Omega(\phi))(V^{(k)}) =& \; \; \;\; \, \Delta \lambda
	\sum_{l \in C_k}  \mathbf p_{l}^\top \mathbf k_{0,l} \mathbf u_{l} \,d_k a_{l} 
	+ \Delta \alpha \sum_{l \in C_k}  \mathbf p_{l}^\top d_k\mathbf m^I_{l} \,\mathbf u_{l}
	  \\
	 &  - \Delta f \sum_{l \in C_k} \mathbf p_{l}^\top d_k\mathbf f_l^I 
	+c_2 \Delta \tilde\alpha \sum_{l \in C_k}  (\mathbf u_{l}-\hat{\mathbf u}_{l})^\top d_k\mathbf m^I_{l} \,(\mathbf u_{l}-\hat{\mathbf u}_{l}) \\
	& -2 \Delta \lambda \sum_{l \in C_k}(\nabla u_h \cdot n^I)|_{\tau_l \cap \partial \Omega(\phi)}(\nabla p_h \cdot n)|_{\tau_l \cap \partial \Omega(\phi)} d_ka_l,
    \end{aligned}
 \end{align}
where we use the same notation as in Theorem \ref{thm_numTopShapeDer_formula}. In particular, $d_k \mathbf m^I_l$ and $d_k \mathbf f^I_l$ depend on the cut situation, $I \in \{A^+, A^-, B^+, B^-, C^+, C^-\}$ and are given explicitly in Appendix \ref{app::shapeDerivative}.
\end{proposition}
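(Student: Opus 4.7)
The plan is to start from the Hadamard boundary form \eqref{eq_dg_Hadamard} of the continuous shape derivative and specialize it to $u_h$, $p_h$, and the vector field $V^{(k)}$. Since $u_h,p_h\in S_h^1(D)$ are piecewise affine on a fixed mesh that does not resolve the interface $\partial\Omega(\levelset)\cap D$, their gradients are continuous across each interface segment, so the in/out expression \eqref{eq_Llambda_inout} collapses to its single-sided discrete counterpart $L_h^\lambda$ in \eqref{eq_Llambdah}. Substituting into \eqref{eq_L} with $c_1=0$ gives
\[
L_h = c_2\Delta\tilde\alpha\,(u_h-\targetU)^2 + \Delta\alpha\, u_h p_h - \Delta f\, p_h + \Delta\lambda\,\nabla u_h\cdot\nabla p_h - 2\Delta\lambda\,(\nabla u_h\cdot n)(\nabla p_h\cdot n).
\]
Because $V^{(k)}$ vanishes outside $\bigcup_{l\in C_k}\tau_l$, the integral in \eqref{eq_dg_Hadamard} splits as $\sum_{l\in C_k}\int_{\tau_l\cap\partial\Omega(\levelset)}L_h\,(V^{(k)}\cdot n)\,\ddsx$.

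I would then treat each of the five contributions to $L_h$ separately. For the diffusion term $\Delta\lambda\,\nabla u_h\cdot\nabla p_h$ and the correction $-2\Delta\lambda(\nabla u_h\cdot n)(\nabla p_h\cdot n)$ the coefficients are constant on each $\tau_l$ (respectively on each segment $\tau_l\cap\partial\Omega(\levelset)$) and can be pulled out of the boundary integral; the remaining factor $\int_{\tau_l\cap\partial\Omega(\levelset)}V^{(k)}\cdot n\,\ddsx$ equals $d_ka_l$ by the configuration-wise identities \eqref{eq_VAnA}--\eqref{eq_VCnC} established in the proof of Proposition \ref{prop_SD_vol}, while $\nabla u_h\cdot\nabla p_h|_{\tau_l}=\mathbf p_l^\top\mathbf k_{0,l}\mathbf u_l$ by the definition of $\mathbf k_{0,l}$ in Theorem \ref{thm_numTopShapeDer_formula}. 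This produces the first and fifth terms of \eqref{eq_formula_dg_pde}. For the reaction, source and tracking contributions I would expand $u_h|_{\tau_l}=\sum_j u_{l_j}\psi_j$, $p_h|_{\tau_l}=\sum_i p_{l_i}\psi_i$ (and the nodal interpolant of $\targetU$ analogously), reducing each boundary integral to the elementary quantities
\[
\int_{\tau_l\cap\partial\Omega(\levelset)}\psi_i\psi_j\,(V^{(k)}\cdot n)\,\ddsx\qquad\text{and}\qquad \int_{\tau_l\cap\partial\Omega(\levelset)}\psi_i\,(V^{(k)}\cdot n)\,\ddsx,
\]
which I would identify with the entries of $d_k\mathbf m_l^I$ and $d_k\mathbf f_l^I$ respectively. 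This identification rests on a discrete Reynolds transport identity: $d_k\mathbf m_l^I$ and $d_k\mathbf f_l^I$ are introduced in Theorem \ref{thm_numTopShapeDer_formula} via the $\eps$-derivative of the volume integrals $\mathbf m_l(\levelset_\eps)$ and $\mathbf f_l(\levelset_\eps)$, and these $\eps$-derivatives equal the above boundary integrals once the restriction of $V^{(k)}$ to $\tau_l\cap\partial\Omega(\levelset)$ is recognized (up to sign) as the geometric velocity induced by the shape perturbation operator $S_{k,\eps}$.

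The main obstacle is this case-by-case verification. Because $S_{k,\eps}$ shrinks $\Omega(\levelset)$, $V^{(k)}$ points inward against the outward normal, which is the origin of the sign $d_ka_l/d_k\tilde a_l=-1$ for $\pos_k\in\setS$ recorded in Remark \ref{remark::area}; this sign must be propagated consistently into every entry of $d_k\mathbf m_l^I$ and $d_k\mathbf f_l^I$ in each of the six cut configurations $I\in\{A^\pm,B^\pm,C^\pm\}$. A uniform argument would invoke a single discrete Reynolds formula on a cut simplex, but a direct symbolic evaluation of both sides in each configuration, using the parametrizations $\hat V^I$ and $\hat n^I$ from the proof of Proposition \ref{prop_SD_vol}, works as well, and is essentially the same computation that produced the entries tabulated in Appendix \ref{app::shapeDerivative}. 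Modulo this bookkeeping the identity is routine, and collecting the five contributions yields precisely \eqref{eq_formula_dg_pde}.
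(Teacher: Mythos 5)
Your proposal follows essentially the same route as the paper's proof: discretize the Hadamard boundary form \eqref{eq_dg_Hadamard} with $L^\lambda$ replaced by $L_h^\lambda$ from \eqref{eq_Llambdah}, restrict to the cut elements $l\in C_k$, pull the (elementwise constant) diffusion terms out of each line integral and use the identities \eqref{eq_VAnA}--\eqref{eq_VCnC} together with Proposition \ref{prop_SD_vol} to produce $d_k a_l$, and identify the remaining line integrals of $\psi_i\psi_j\,(V^{(k)}\cdot n)$ and $\psi_i\,(V^{(k)}\cdot n)$ with the entries of $d_k\mathbf m_l^I$ and $d_k\mathbf f_l^I$ by case-by-case computation, exactly as the paper does (which carries out one illustrative case by hand and defers the rest to symbolic algebra). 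Your additional observation that this identification can be seen as a discrete transport identity linking the $\eps$-derivatives of the element matrices under $S_{k,\eps}$ to boundary integrals with the induced interface velocity is a nice conceptual gloss, but it does not change the argument.
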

\begin{proof}
Let an element index $l \in C_k$ fixed and $\mathbf u_l = [u_{l_1}, u_{l_2}, u_{l_3}]^\top$, $\mathbf p_l = [p_{l_1}, p_{l_2}, p_{l_3}]^\top$ contain the nodal values of the finite element functions $u_h$ and $p_h$ corresponding to the three vertices $\pos_{l_1}$, $\pos_{l_2}$, $\pos_{l_3}$ of element $l$, respectively. Also here, the ordering is in counter-clockwise direction starting with $\pos_{l_1} = \pos_k$.
We compute the shape derivative \eqref{eq_dg_Hadamard} with $L$ given in \eqref{eq_L} after discretization (i.e. after replacing the functions $u$, $p$, $\hat u$ by finite element approximations $u_h$, $p_h$, $\hat u_h$). In particular, the term $L^\lambda$ is approximated by \eqref{eq_Llambdah}.
Depending on how the material interface $\partial \Omega(\phi)$ cuts through element $\tau_l$, the normal component of the vector field $V^{(k)}$ along the line $\tau_l \cap \partial \Omega(\phi)$ is given in \eqref{eq_VAnA}--\eqref{eq_VCnC}.
For and $I \in \{A^+, A^-, B^+, B^-, C^+, C^-\}$, it can be seen by elementary yet tedious calculations that
\begin{align*}
    \int_{\tau_l \cap \partial \Omega(\phi)} p_h(x) V^{I}(x) \cdot n^{I} \, \mbox dS_x =& \; \mathbf p_l^\top d_k \mathbf f_l^I,\\
    \int_{\tau_l \cap \partial \Omega(\phi)} u_h(x) p_h(x) V^{I}(x) \cdot n^{I} \, \mbox dS_x =& \; \mathbf p_l^\top d_k \mathbf m^I_l \mathbf u_l,\\
    \int_{\tau_l \cap \partial \Omega(\phi)} (u_h(x)-\hat u_h(x))^2 V^{I}(x) \cdot n^{I} \, \mbox dS_x =& \; (\mathbf u_l - \hat{\mathbf u}_l)^\top d_k \mathbf m^I_l (\mathbf u_l - \hat{\mathbf u}_l),
\end{align*}
with $d_k \mathbf m^I_l$ and $d_k \mathbf f^I_l$ as given in Appendix \ref{app::shapeDerivative}.
Examplarily, we illustrate the calculation for the second of these terms for the cut situation $I = A^+$, see Figure \ref{fig::sets}(a). Let $u_{l,12}$ and $u_{l,13}$ denote the values of the linear function $u_h|_{\tau_l}$ at the intersection of the interface $\partial \Omega(\phi)$ with the edges that connect the point $\pos_{l_1}$ with $\pos_{l_2}$ and $\pos_{l_1}$ with $\pos_{l_3}$, respectively. Note the relations $u_{l,12} = \frac{u_{l_2} \phi_{l_1} - u_{l_1} \phi_{l_2}}{\phi_{l_1}-\phi_{l_2}}$ and $u_{l,13} = \frac{u_{l_3} \phi_{l_1} - u_{l_1} \phi_{l_3}}{\phi_{l_1}-\phi_{l_3}}$. Analogously we define the values $p_{l,12}$ and $p_{l,13}$. The function $u_h$ along the line $\tau_l \cap \partial \Omega(\phi)$ can now be written as $\hat u_h(s) = u_{l,12} + s (u_{l,13} - u_{l,12})$, $s \in [0,1]$ and we get
\begin{align*}
    \int_{\tau_l \cap \partial \Omega(\phi)}& u_h(x) p_h(x) V^{I}(x) \cdot n^{I} \, \mbox dS_x \\
    &= d_{13,12} \int_0^1 (u_{l,12} + s (u_{l,13} - u_{l,12})) (p_{l,12} + s (p_{l,13} - p_{l,12})) \hat V^{A^+}(s) \cdot \hat n^{A^+} \; \mbox ds \\
    &= \mathbf p_l^\top d_k \mathbf m^{A^+}_l \mathbf u_l
\end{align*}
where $d_{13,12} = |\tau_l \cap \partial \Omega(\phi)|$. The last equality is obtained by plugging in \eqref{eq_VAnA} and straightforward (yet tedious) calculation.
Finally, since $u_h$ and $p_h$ are linear and the normal vector is constant on $\tau_l \cap \partial \Omega(\phi)$, we see that $L_h^\lambda$ is constant and, using Proposition \ref{prop_SD_vol}, we obtain
\begin{align*}
    \int_{\tau_l \cap \partial \Omega(\phi)} L_h^\lambda(x) V^I(x)\cdot n^I \; \mbox dS_x =& L_h^\lambda(x) \int_{\tau_l \cap \partial \Omega(\phi)}  V^I(x)\cdot n^I \; \mbox dS_x \\
    =&\Delta \lambda  \left( \mathbf p_l^\top \mathbf k_{0,l} \mathbf u_l -2 (\nabla u_h \cdot n^I)|_{\tau_l \cap \partial \Omega(\phi)}(\nabla p_h \cdot n)|_{\tau_l \cap \partial \Omega(\phi)} \right) d_ka_l.
\end{align*}
\end{proof}

Combining the findings of Propositions \ref{prop_SD_vol} and \ref{prop_SD_pde} and dividing by $d_k \tilde a$ (defined in \ref{remark::area}), we obtain the resulting formula for the alternative definition of the shape derivative as defined in \eqref{eq::shapeDerivativeAlternative}
\begin{align} \label{eq_dhatg_discr}
    \begin{aligned}
    \hat d &g(\Omega(\phi))(V^{(k)}) = -c_1 + \frac{1}{d_k \tilde a} \Bigg( \Delta \lambda
\sum_{l \in C_k}  \mathbf p_{l}^\top \mathbf k_{0,l} \mathbf u_{l} \,d_k a_{l} 
+ \Delta \alpha \sum_{l \in C_k}  \mathbf p_{l}^\top d_k\mathbf m^I_{l} \,\mathbf u_{l}  - \Delta f \sum_{l \in C_k} \mathbf p_{l}^\top d_k\mathbf f_l^I 
    \\
    & +c_2 \Delta \tilde\alpha \sum_{l \in C_k}  (\mathbf u_{l}-\hat{\mathbf u}_{l})^\top d_k\mathbf m^I_{l} \,(\mathbf u_{l}-\hat{\mathbf u}_{l}) 
    -2 \Delta \lambda \sum_{l \in C_k}(\nabla u_h \cdot n^I)|_{\tau_l \cap \partial \Omega(\phi)}(\nabla p_h \cdot n)|_{\tau_l \cap \partial \Omega(\phi)} d_ka_l \Bigg).
    \end{aligned}
\end{align}
\begin{remark}
    Note that \eqref{eq_dhatg_discr} is obtained by discretizing the continuous shape derivative \eqref{eq_dg_Hadamard}--\eqref{eq_Llambda_inout}. We see immediately that \eqref{eq_dhatg_discr} resembles the formula for the discrete \topshapeDerivative derivative for nodes $\pos_k \in \setS$ \eqref{eq::formulaShapeDerivative}. The only difference is the occurence of the last term in \eqref{eq_dhatg_discr}, which is not accounted for when performing the sensitivity analysis in the discrete setting.

Note that this term stems from the presence of $\partial T_t^{-1} \partial T_t^{-T}$ in the matrix $A(t)$, which, in turn, originates from two applications of the chain rule, $(\nabla \varphi) \circ T_t = \partial T_t^{-\top} \nabla (\varphi \circ T_t)$. 
Similarly as in the case of the topological derivative in Section \ref{sec_connTD}, the reason for this discrepancy is the fact that, for the given discretization scheme, the gradients of the finite element basis functions are constant on each element and thus $(\nabla \varphi) \circ T_t = \nabla \varphi$ for small enough shape perturbation parameter $t$.

\end{remark}

\begin{remark}
    As a second, conceptual difference between the classical continuous shape derivative defined by \eqref{eq::shapeDerivative} and our discrete counterpart \eqref{eq::formulaShapeDerivative}, we recall that in the definition \eqref{eq::topDerivative}
    we divided by the change of volume also in the case $\pos_k \in \setS$.
\end{remark}

\section{Numerical Experiments}\label{sec::numericalExamples}
In this section, we verify our implementation of the numerical \topshapeDerivative derivative derived in Section \ref{sec::numTopShapeDer} by numerical experiments, before applying a level-set based topology optimization algorithm based on these sensitivities to our model problem.
\subsection{Verification}
The implementation of the \topshapeDerivative derivative is verified by comparing the computed sensitivity values against numerical values obtained by three different approaches. These are (i) a finite difference test, (ii) an application of the complex step derivative \cite{martins2003complex} and (iii) a test based on hyper-dual numbers developed in \cite{fike2011development}. All tests are conducted for a fixed configuration.

We recall the definition of the \topshapeDerivative derivative \eqref{eq::topDerivative} at a node $\pos_k$ of the mesh
\begin{align} \label{eq_dJ_lim}
    d\redPhiObjectiveFunction(\levelset)(\pos_k) = \lim_{\varepsilon\searrow 0}
        \delta \redPhiObjectiveFunction_{\eps}(\levelset)(\pos_k) \quad \mbox{ with } \quad
    \delta \redPhiObjectiveFunction_{\eps}(\levelset)(\pos_k) :=
\frac{\redPhiObjectiveFunction({O_{k,\eps}} \levelset)-\redPhiObjectiveFunction(\levelset)}
{|\Omega({O_{k,\eps}}\levelset) \symmDiff \Omega(\levelset)|}
\end{align}
where $O_{k,\eps}$ represents the operator $\Tplus$, $\Tminus$, $S_{k,\varepsilon}$ depending on whether the node $\pos_k$ is in $\setTminus$, $\setTplus$ or $\setS$, respectively.

\subsubsection{Finite difference test}
For the finite difference (FD) test, we compute the errors
\begin{align} \label{eq_def_errors_eSeT}
e_S^{FD}(\eps) = \sqrt{\sum_{\pos_k \in \setS} (\delta \redPhiObjectiveFunction_{\eps}(\levelset)(\pos_k) - d\redPhiObjectiveFunction(\levelset)(\pos_k))^2}, \quad
e_T^{FD}(\eps) = \sqrt{\sum_{\pos_k \in \setTminus \cup \setTplus} (\delta \redPhiObjectiveFunction_{\eps}(\levelset)(\pos_k) - d\redPhiObjectiveFunction(\levelset)(\pos_k))^2}
\end{align}
for a decreasing sequence of values for $\eps$. 
The results are shown in Figure \ref{fig:verify}(a). We observe convergence of order $\eps$ up to a point where the cancellation error dominates. 
\begin{figure}
    \begin{tabular}{ccc}
        \includegraphics[width=0.33\linewidth]{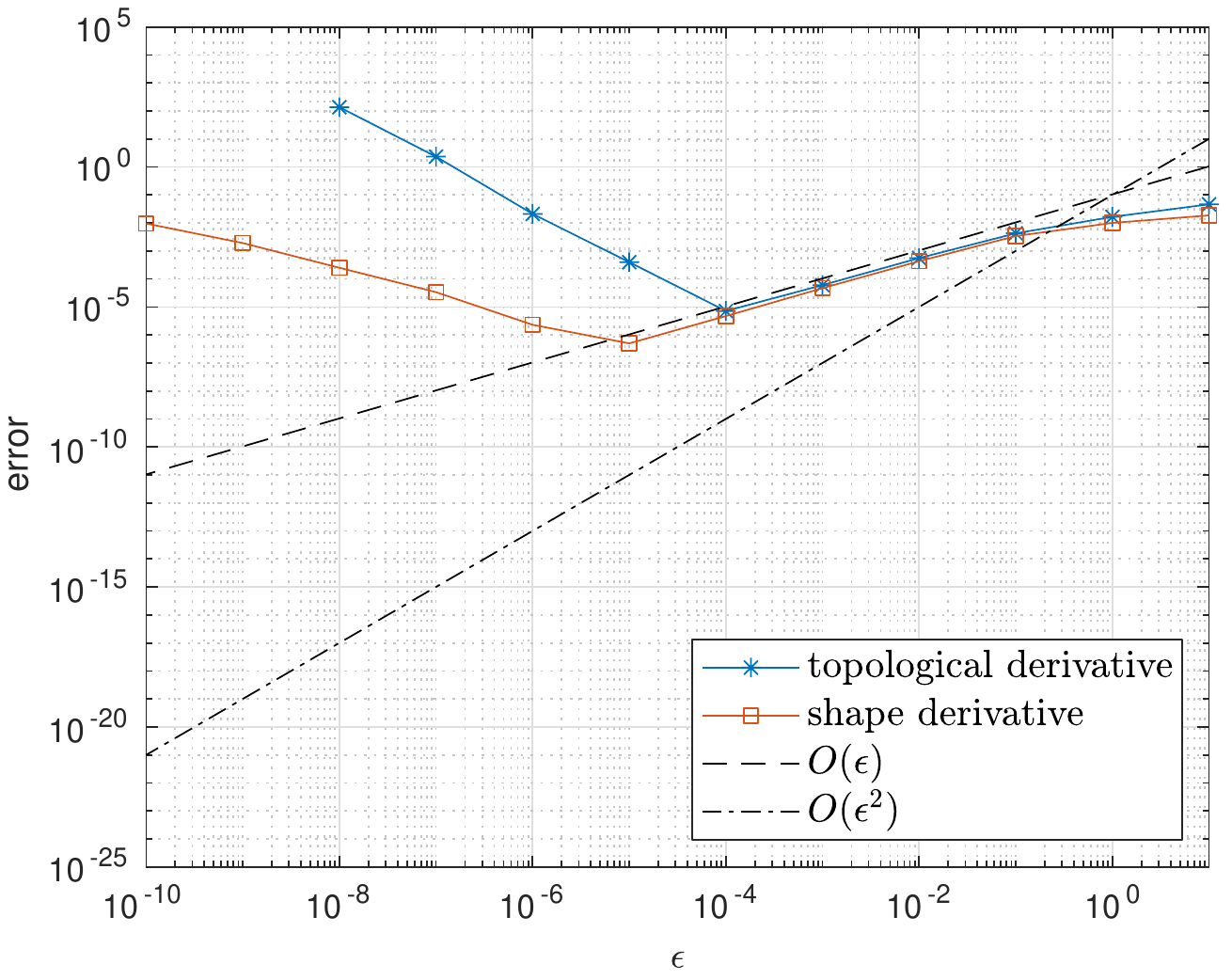} & \includegraphics[width=0.33\linewidth]{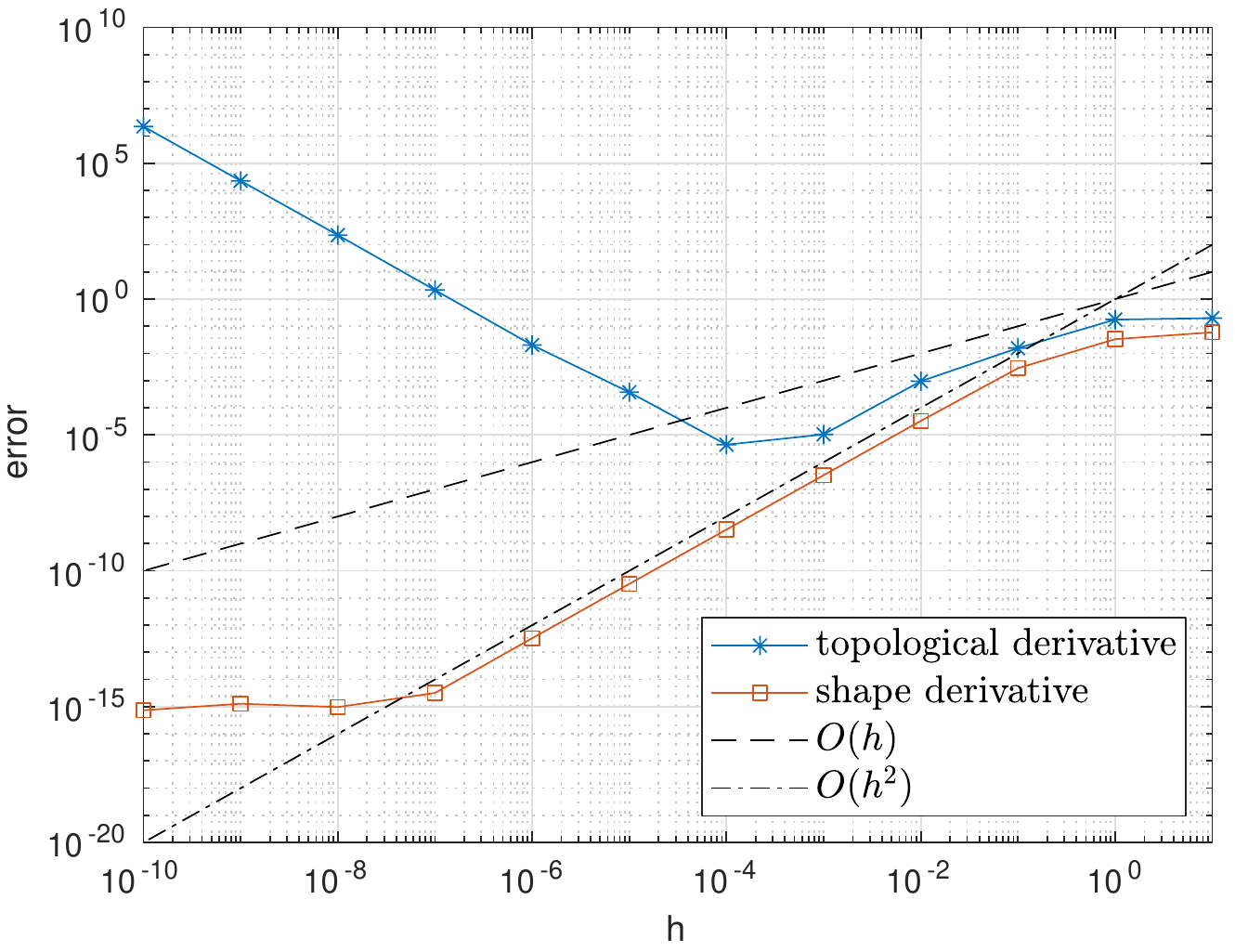} &
        \includegraphics[width=0.33\linewidth]{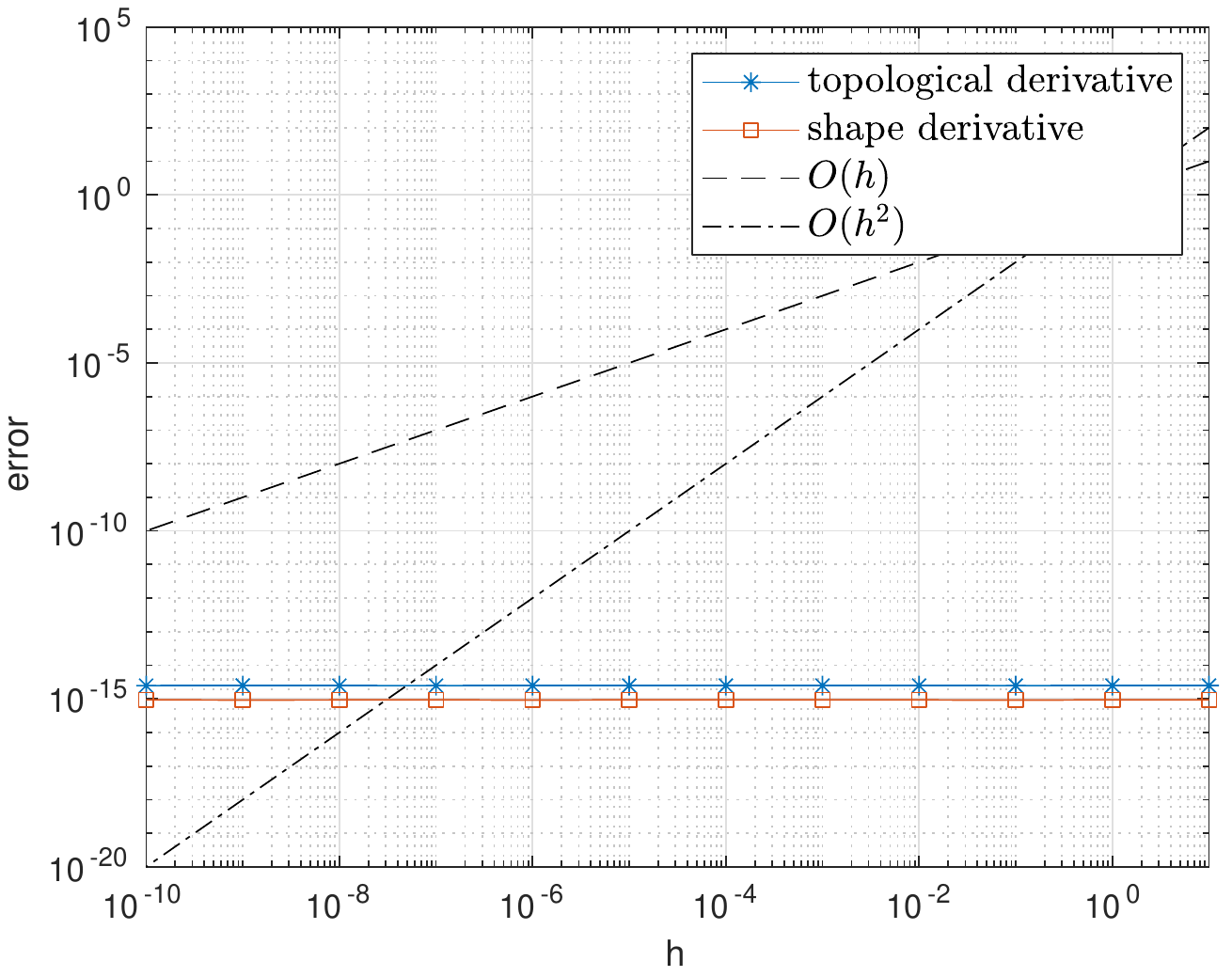} \\
        (a) & (b) & (c) 
    \end{tabular}
\caption{(a) Results of the finite difference test. (b) Results obtained with the complex step derivative. (c) Results obtained with hyper-dual numbers.}
    \label{fig:verify}
\end{figure}

\subsubsection{Complex step derivative test}
In order to overcome this drawback of the finite difference test, we next consider a test based on the complex step (CS) derivative \cite{martins2003complex}. For that purpose, using Remark \ref{remark::area}, let us first rewrite \eqref{eq_dJ_lim} as
\begin{align} \label{eq_dJ_lim2}
    d\redPhiObjectiveFunction(\levelset)(\pos_k) = \frac{\lim_{\varepsilon\searrow 0}\frac{\redPhiObjectiveFunction({O_{k,\eps}} \levelset)-\redPhiObjectiveFunction(\levelset)}
{\eps^o}}{\lim_{\varepsilon\searrow 0}\frac{|\Omega({O_{k,\eps}}\levelset) \symmDiff \Omega(\levelset)|}{\eps^o}}
 =\frac{1}{d_k \tilde a} \lim_{\varepsilon\searrow 0}\frac{\redPhiObjectiveFunction({O_{k,\eps}} \levelset)-\redPhiObjectiveFunction(\levelset)}
{\eps^o},
\end{align}
where $o=1$ if $\pos_k \in \setS$ and $o=2$ if $\pos_k \in \setTminus \cup \setTplus$. Moreover,
assuming a higher order expansion of the form
\begin{align} \label{eq_expansion_op3}
    \redPhiObjectiveFunction(O_{k,\eps}\levelset) = \redPhiObjectiveFunction(\levelset) 
    + \eps^o \, d_k \tilde a \, d \redPhiObjectiveFunction(\levelset)(\pos_k)
    + \eps^{o+1} \, d_k \tilde a \, d^2 \redPhiObjectiveFunction(\levelset)(\pos_k)
    + \eps^{o+2} \, d_k \tilde a \, d^3 \redPhiObjectiveFunction(\levelset)(\pos_k) + \mathfrak{o}(\eps^{o+2})
\end{align}
with some higher order sensitivities $
d^2 \redPhiObjectiveFunction(\levelset)(\pos_k)$, $d^3 \redPhiObjectiveFunction(\levelset)(\pos_k)$ and assuming that \eqref{eq_expansion_op3} also holds for complex-valued $\eps$, we can follow the idea of the complex step derivative \cite{martins2003complex}: Setting $\eps = ih$ in \eqref{eq_expansion_op3} with $h>0$ and $i$ the complex unit yields
\begin{align}
    d \redPhiObjectiveFunction(\levelset)(\pos_k) = \frac{\text{Im}(\redPhiObjectiveFunction(O_{k,ih}\levelset))}{h \, d_k \tilde a} + \mathcal O(h^2)
\end{align}
in the case $o=1$ where $\pos_k \in \setS$, and 
\begin{align*}
    d \redPhiObjectiveFunction(\levelset)(\pos_k) = \frac{\text{Re}(\redPhiObjectiveFunction(O_{k,ih}\levelset)-\redPhiObjectiveFunction(\levelset))}{-h^2 \, d_k \tilde a} + \mathcal O(h^2)
\end{align*}
in the case $o=2$ where $\pos_k \in \setTminus \cup \setTplus$. This means
\begin{align*}
    d \redPhiObjectiveFunction(\levelset)(\pos_k) = \delta \redPhiObjectiveFunction_h^{CS}(\levelset)(\pos_k) + \mathcal O(h^2)
\end{align*}
with
\begin{align} \label{eq_deltaJh_CS}
    \delta \redPhiObjectiveFunction_h^{CS}(\levelset)(\pos_k) := \begin{cases}
        \frac{\text{Re}(\redPhiObjectiveFunction(\TplusComplex \levelset)-\redPhiObjectiveFunction(\levelset))}{-h^2 \, d_k \tilde a}, & \pos_k \in \setTminus ,\\
        \frac{\text{Re}(\redPhiObjectiveFunction(\TminusComplex \levelset)-\redPhiObjectiveFunction(\levelset))}{-h^2 \, d_k \tilde a}, & \pos_k \in \setTplus ,\\
        \frac{\text{Im}(\redPhiObjectiveFunction(\SComplex \levelset))}{h \, d_k \tilde a}, & \pos_k \in \setS.
        \end{cases}
\end{align}
Analogously to \eqref{eq_def_errors_eSeT}, we define the summed errors $e_S^{CS}(h)$ and $e_T^{CS}(h)$ by just replacing $\delta \redPhiObjectiveFunction_\eps(\levelset)(\pos_k)$ by $\delta \redPhiObjectiveFunction_h^{CS}(\levelset)(\pos_k)$ defined above. Figure  \ref{fig:verify}(b) shows the errors $e_S^{CS}$ and $e_T^{CS}$ for a positive, decreasing sequence of $h$ where we observe quadratic decay for both errors. While the error $e_S^{CS}$ corresponding to the shape nodes $\pos_k \in \setS$ decays to machine precision, the error $e_T^{CS}$ corresponding to the interior nodes $\pos_k \in \setTminus \cup \setTplus$ deteriorates at some point due to the cancellation error ocurring when subtracting $\redPhiObjectiveFunction(\levelset)$ from $\redPhiObjectiveFunction(O_{k,ih}\levelset)$ in \eqref{eq_deltaJh_CS}.

\subsubsection{Test based on hyper-dual numbers}
In order to overcome this cancellation error also for the case of $\pos_k \in \setTminus \cup \setTplus$, we resort to hyper-dual (HD) numbers as introduced in \cite{fike2011development}. Here, the idea is to consider numbers with three non-real components denoted by $E_1$, $E_2$ and $E_1 E_2$ with $E_1^2 = E_2^2 = (E_1E_2)^2=0$. Assuming that expansion \eqref{eq_expansion_op3} holds up to order $o+1$ also for such hyper-dual values of $\eps$, we can set $\eps = h E_1 + h E_2 + 0 E_1E_2$ for some $h>0$. For $o=1$, considering only the first non-real part (i.e., the $E_1$-part) and exploiting that $E_1^2 = 0$, we obtain the equality
\begin{align} \label{eq_dJHD_shape}
    d \redPhiObjectiveFunction(\levelset)(\pos_k) = \frac{E_1\text{part}(\redPhiObjectiveFunction(O_{k, h E_1 + h E_2}\levelset))}{h \, d_k \tilde a}
\end{align}
for $\pos_k \in \setS$. Similarly, with the same choice of $\eps$, by considering only the $E_1E_2$-part of the expansion and exploiting $E_1^2=E_2^2=E_1^2E_2^2=0$, we obtain for $o=2$
\begin{align} \label{eq_dJHD_topo}
    d \redPhiObjectiveFunction(\levelset)(\pos_k) = \frac{E_1E_2\text{part}(\redPhiObjectiveFunction(O_{k, h E_1 + h E_2}\levelset))}{2 h^2 \, d_k \tilde a}
\end{align}
for $\pos_k \in \setTminus \cup \setTplus$. In this case, the corresponding summed errors $e_S^{HD}(h)$ and $e_T^{HD}(h)$ vanish for arbitrary $h \in \mathbb R$. This is also observed numerically since both \eqref{eq_dJHD_shape} and \eqref{eq_dJHD_topo} suffer neither from a truncation nor a cancellation error. Figure \ref{fig:verify}(c) shows that the the obtained results agree up to machine precision with the derivatives obtained by \eqref{eq::formulaTopologicalDerivativePlus}, \eqref{eq::formulaTopologicalDerivativeMinus}, and the respective formula for the shape derivative \eqref{eq::formulaShapeDerivative}.
\subsection{Application of optimization algorithm to model problem}
Finally we show the use of the numerical \topshapeDerivative derivative computed in Section \ref{sec::numTopShapeDer} within a level-set based topology optimization algorithm. We first state the precise model problem, before introducing the algorithm and showing numerical results.
\subsubsection{Problem setting}\label{sec::numericalProblemSetting}
We consider the unit square $D=[0,1]^2$ and minimize the objective function \eqref{eq::ContinuousObjective} with $c_1=0$ and $c_2=1$ subject to the PDE constraint in \eqref{eq::ContinuousProblem}. The chosen problem parameters are shown in Table \ref{tab::parameters}.
\begin{table}
	\centering
	\begin{tabular}{cccccccc}
		\toprule
		$\tilde\alpha_1$ & $\tilde\alpha_2$ &
		$\alpha_1$ &  $\alpha_2$ &
		$\lambda_1$ &  $\lambda_2$ &
		$f_1$ &  $f_2$ \\
		1 & 0.9 & 1 & 0.2 & 1 & 0.6 & 1 & 0.5 \\
		\bottomrule
	\end{tabular}
	\caption{Problem parameters for the numerical experiment}
	\label{tab::parameters}
\end{table}
We consider a mixed Dirichlet-Neumann problem by choosing
\begin{equation*}
\Gamma_D = \{(x,y)\in\partial D| y=0 \mbox{ or } y=1\} , \qquad
\Gamma_N = \partial D \setminus \Gamma_D,
\end{equation*}
and $g_D(x,y) = y$, $g_N(x,y) = 0$.
In order to define a desired state $\hat u$, we choose a level-set function $\phi_d$ which implies a desired shape $\Omega_d$, compute the corresponding solution $u^*$ to $\phi_d$ and set $\hat u := u^*$. Then, by construction, $(\Omega_d, \hat u)$ is also the solution of the design optimization problem.
In the numerical tests we used five different meshes with 145, 545, 2113, 8321, and 33025 nodes respectively. 
For each mesh we obtain $\phi_d$ by interpolation of
\begin{equation}
\bar\phi_d(x,y) = \left((x-0.3)^2+(y-0.4)^2-0.2^2\right)\left((x-0.7)^2+(y-0.7)^2-0.1^2\right).
\end{equation}
This yields that $\Omega$ are two (approximated) circles with radii $0.2$ and $0.1$ respectively, see Figure \ref{fig:holes-meshes}.
\begin{figure}
	\begin{subfigure}[t]{0.19\textwidth}
		\includegraphics[width=\textwidth]{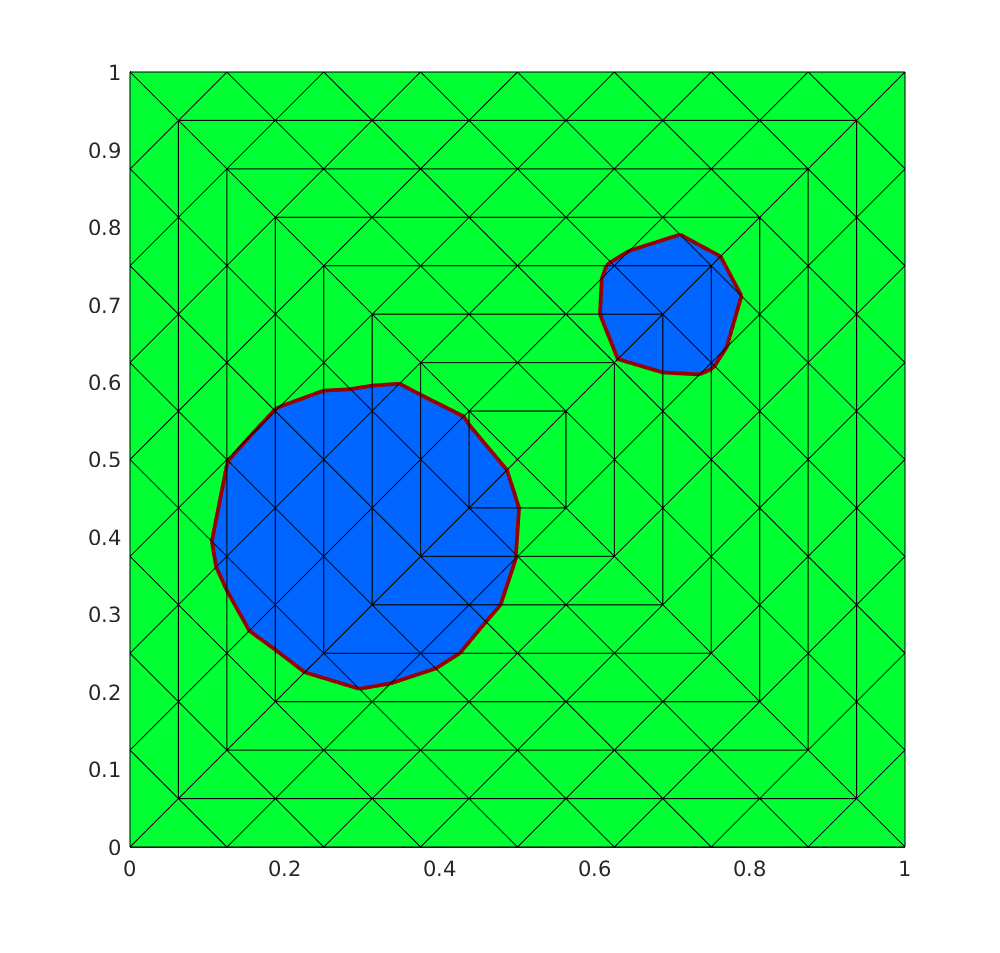}
		\subcaption{145 nodes}
	\end{subfigure}\hfill
	\begin{subfigure}[t]{0.19\textwidth}
		\includegraphics[width=\textwidth]{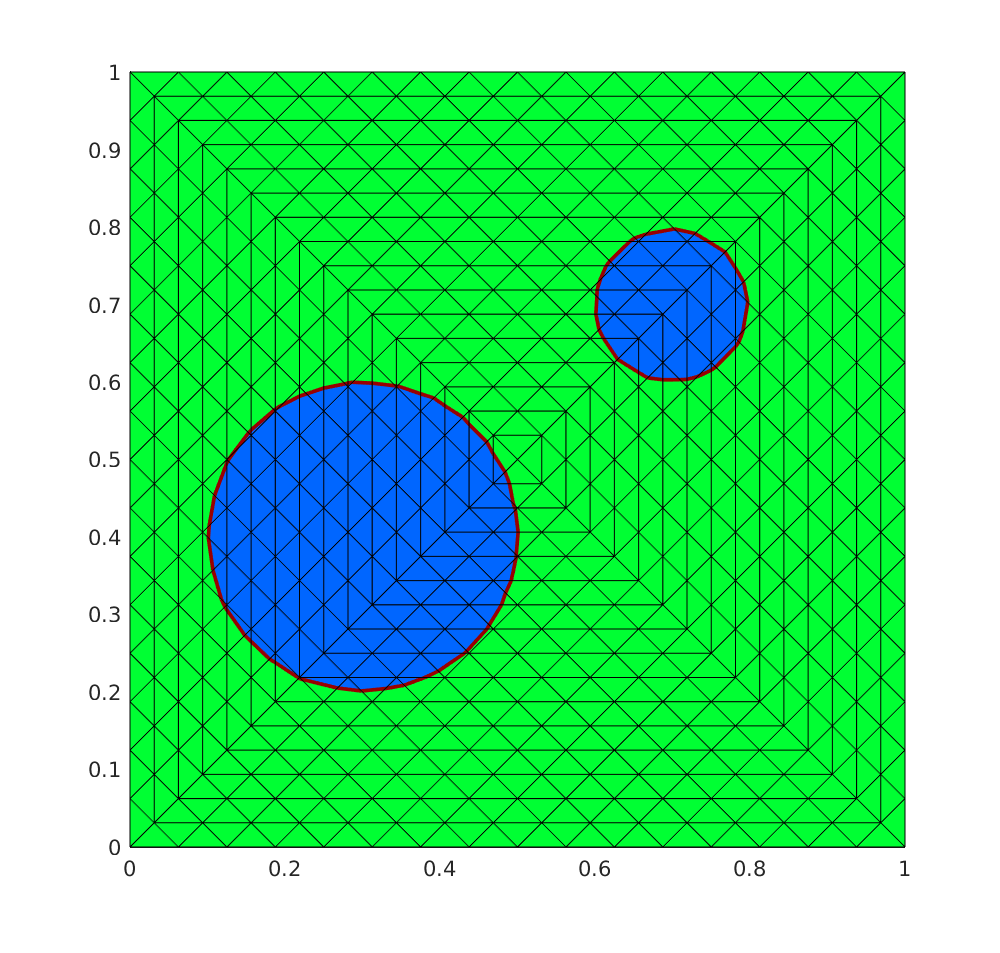}
		\subcaption{545 nodes}
	\end{subfigure}\hfill
	\begin{subfigure}[t]{0.19\textwidth}
		\includegraphics[width=\textwidth]{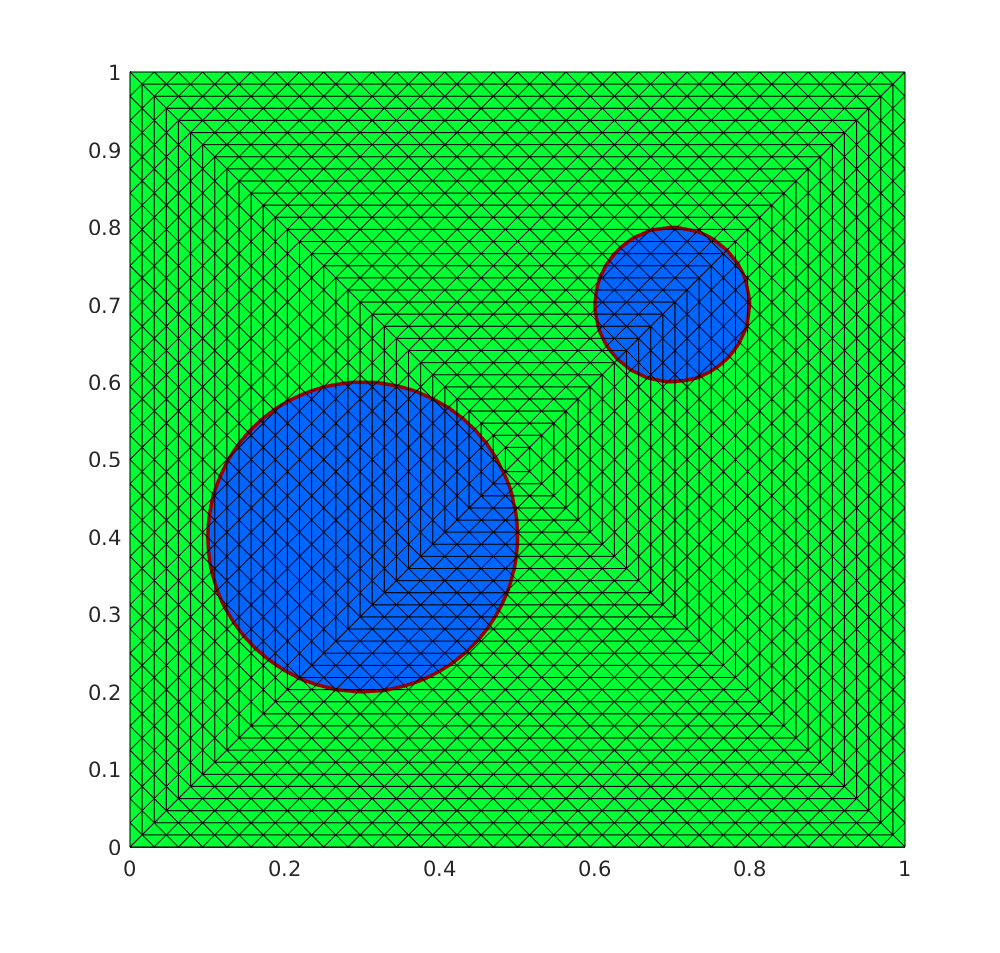}
		\subcaption{2113 nodes}
	\end{subfigure}\hfill
	\begin{subfigure}[t]{0.19\textwidth}
		\includegraphics[width=\textwidth]{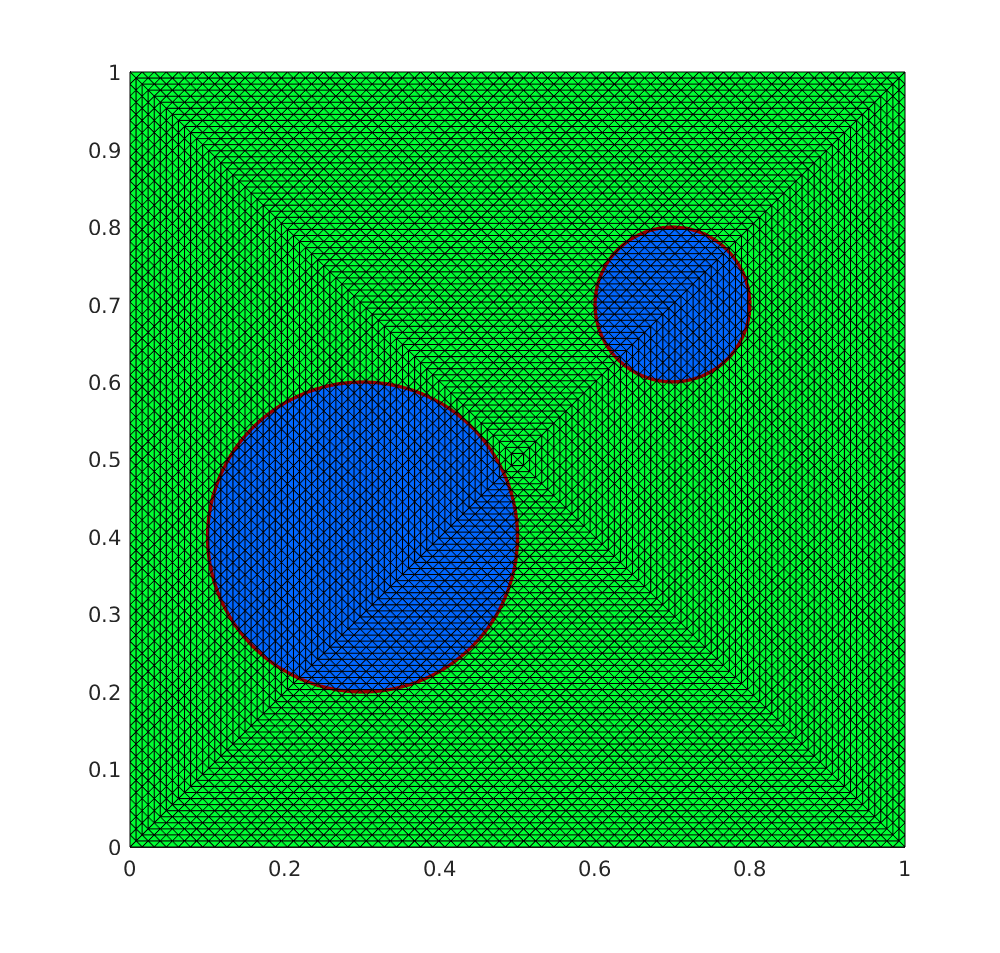}
		\subcaption{8321 nodes}
	\end{subfigure}\hfill
	\begin{subfigure}[t]{0.19\textwidth}
		\includegraphics[width=\textwidth]{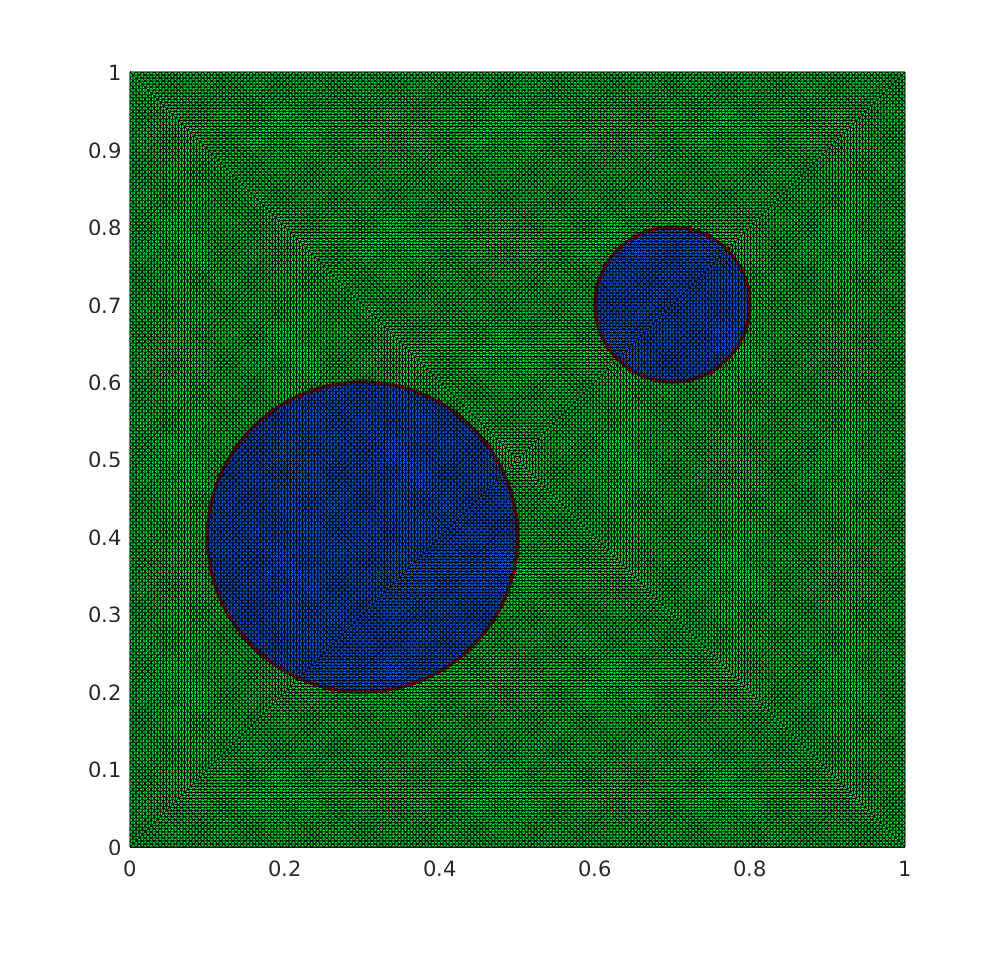}
		\subcaption{33025 nodes}
	\end{subfigure}\hfill
	\caption{The different meshes and corresponding sought shapes used in the numerical experiments.}
	\label{fig:holes-meshes}
\end{figure}
\subsubsection{Optimization algorithm}\label{sec::algorithm}
\newcommand{\iter}{i}
The optimization algorithm we use to solve the problem introduced in Section \ref{sec::numericalProblemSetting} is inspired by \cite{amstutz2006new}. 

\begin{definition} \label{def_locOpti}
    We say a level set function $\phi \in S_h^1(D)$ is locally optimal for the problem described by $\mathcal J$ if
    \begin{align}
        \begin{cases}
            d \mathcal J(\phi)(\pos_k) \geq 0 & \text{for } \pos_k \in \setTminus \cup \setTplus,\\
            d \mathcal J(\phi)(\pos_k) = 0 & \text{for } \pos_k \in \setS.
        \end{cases}
    \end{align}
\end{definition}

We introduce the generalized numerical \topshapeDerivative derivative $G_\levelset \in S_h^1(D)$ with
\begin{equation} \label{eq_defGphi}
 G_\levelset(\pos_k) = \begin{cases}
 -\min(d\redPhiObjectiveFunction(\levelset)(\pos_k),0) \quad &\text{for } \pos_k \in \setTminus, \\[8pt]
 \min(d\redPhiObjectiveFunction(\levelset)(\pos_k),0) \quad &\text{for } \pos_k \in \setTplus, \\[8pt]
 -d\redPhiObjectiveFunction(\levelset)(\pos_k) \quad &\text{for } \pos_k \in \setS.
 \end{cases}
\end{equation}
With this definition, we immediately get the following optimality condition:
\begin{lemma}
    Let $\phi \in S_h^1(D)$ and
	\begin{equation} \label{eq_optiCondition}
		 G_\levelset(\pos_k) = 0, \quad \text{for } k = 1,\dots ,M.
	\end{equation}
	Then $\phi$ is locally optimal in the sense of Definition \ref{def_locOpti}.
\end{lemma}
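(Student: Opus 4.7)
The plan is a direct case analysis over the three possible categorizations of a node $\pos_k$, using the definition of $G_\phi$ given in \eqref{eq_defGphi} and matching each case against the corresponding condition in Definition \ref{def_locOpti}. Since the set $\mesh$ is partitioned into $\setTminus$, $\setTplus$, and $\setS$, the assumption $G_\phi(\pos_k) = 0$ for all $k = 1, \dots, M$ can be decomposed accordingly, and each case handled independently.

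For a node $\pos_k \in \setTminus$, the hypothesis reads $-\min(d\redPhiObjectiveFunction(\levelset)(\pos_k), 0) = 0$, which is equivalent to $\min(d\redPhiObjectiveFunction(\levelset)(\pos_k), 0) = 0$, i.e., $d\redPhiObjectiveFunction(\levelset)(\pos_k) \geq 0$. Analogously, for $\pos_k \in \setTplus$, the hypothesis $\min(d\redPhiObjectiveFunction(\levelset)(\pos_k), 0) = 0$ again yields $d\redPhiObjectiveFunction(\levelset)(\pos_k) \geq 0$. Both of these are precisely the first condition in Definition \ref{def_locOpti}. Finally, for $\pos_k \in \setS$, $G_\phi(\pos_k) = -d\redPhiObjectiveFunction(\levelset)(\pos_k) = 0$ immediately gives $d\redPhiObjectiveFunction(\levelset)(\pos_k) = 0$, matching the second condition.

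There is no real obstacle here; the statement is essentially a direct unpacking of the definition of $G_\phi$ in \eqref{eq_defGphi}. The only point worth noting is that the definition has been designed in exactly such a way that $G_\phi(\pos_k) = 0$ encodes both the one-sided optimality condition on interior nodes (where only one sign of topological perturbation is meaningful, since only a sign change of $\phi$ at that node alters the topology) and the two-sided condition on boundary nodes (where both directions of shifting $\phi$ correspond to admissible shape perturbations). Once this correspondence is stated, the lemma follows in one line per case.
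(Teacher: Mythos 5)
Your proof is correct and is exactly the argument the paper intends: the paper states this lemma without proof as an immediate consequence of the definition \eqref{eq_defGphi}, and your case-by-case unpacking ($-\min(d\redPhiObjectiveFunction(\levelset)(\pos_k),0)=0$ or $\min(d\redPhiObjectiveFunction(\levelset)(\pos_k),0)=0$ iff $d\redPhiObjectiveFunction(\levelset)(\pos_k)\geq 0$ on $\setTminus\cup\setTplus$, and $-d\redPhiObjectiveFunction(\levelset)(\pos_k)=0$ iff $d\redPhiObjectiveFunction(\levelset)(\pos_k)=0$ on $\setS$) is precisely that verification. Nothing is missing.
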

The update of the level-set function based on the information of the \topshapeDerivative derivative is done by spherical linear interpolation (see also \cite{amstutz2006new})
\begin{equation} \label{eq_update_slerp}
	\phi_{\iter+1} = \frac{1}{\sin(\theta_\iter)}\left(\sin((1-\kappa_\iter)\theta_\iter) \phi_\iter + \sin(\kappa_\iter\theta_\iter)\frac{G_{\phi_i}}{\|G_{\phi_i}\|_{L_{2}(D)}} \right),
\end{equation}
where $\theta_i = \mbox{arc cos}(( \phi_i, G_{\phi_i})_{L^2(D)})$ is the angle between the given level set function $\phi_i$ and the sensitivity $G_{\phi_i}$ in an $L^2(D)$-sense. Here, $\kappa_\iter \in (0,1)$ is a line search parameter which is adapted such that a decrease in the objective function is achieved. Note that, by construction, the update \eqref{eq_update_slerp} preserves the $L^2(D)$-norm, $\|\phi_{\iter+1}\|_{L^2(D)} = \|\phi_{\iter}\|_{L^2(D)}$. As in \cite{amstutz2006new,gangl2020multi}, we can also show that $\phi$ is evolving along a descent direction:
\begin{lemma}
    Let $\phi_\iter, \phi_{\iter+1} \in S_h^1(D)$ two subsequent iterates related by \eqref{eq_update_slerp}. Then we have for $\pos_k \in \setTminus(\phi_i) \cup \setTplus(\phi_i)$
    \begin{align}
        \phi_{\iter}(\pos_k) > 0 > \phi_{\iter+1}(\pos_k) \Longrightarrow d \mathcal J(\phi_\iter)(\pos_k) <0  \label{eq_descentTplus}\\
        \phi_{\iter}(\pos_k) < 0 < \phi_{\iter+1}(\pos_k) \Longrightarrow d \mathcal J(\phi_\iter)(\pos_k) <0 \label{eq_descentTminus}
    \end{align}
\end{lemma}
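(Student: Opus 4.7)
The plan is to read the update rule pointwise at the node $\pos_k$ and track signs carefully, using the definition of $G_\phi$ in \eqref{eq_defGphi}. First I would note that since $\kappa_i \in (0,1)$ and (generically) $\theta_i \in (0,\pi)$, the three coefficients $1/\sin(\theta_i)$, $\sin((1-\kappa_i)\theta_i)$, $\sin(\kappa_i\theta_i)$ are all strictly positive. Evaluating \eqref{eq_update_slerp} at $\pos_k$ then yields
\begin{equation*}
    \phi_{i+1}(\pos_k) = \frac{1}{\sin(\theta_i)}\left(\sin((1-\kappa_i)\theta_i)\,\phi_i(\pos_k) + \sin(\kappa_i\theta_i)\,\frac{G_{\phi_i}(\pos_k)}{\|G_{\phi_i}\|_{L^2(D)}}\right),
\end{equation*}
so that $\phi_{i+1}(\pos_k)$ is a strictly positive combination of $\phi_i(\pos_k)$ and $G_{\phi_i}(\pos_k)$. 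Hence any sign change between $\phi_i(\pos_k)$ and $\phi_{i+1}(\pos_k)$ forces $G_{\phi_i}(\pos_k)$ to have the sign opposite to $\phi_i(\pos_k)$.

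To obtain \eqref{eq_descentTplus}, I would observe that $\phi_i(\pos_k) > 0$ rules out $\pos_k \in \setTminus(\phi_i)$ (since that set requires $\phi_i(\pos_k) \le 0$), so by assumption $\pos_k \in \setTplus(\phi_i)$. The previous paragraph gives $G_{\phi_i}(\pos_k) < 0$, and by the $\setTplus$-branch of \eqref{eq_defGphi} we have $G_{\phi_i}(\pos_k) = \min(d\redPhiObjectiveFunction(\phi_i)(\pos_k),0)$; for this minimum to be strictly negative we need $d\redPhiObjectiveFunction(\phi_i)(\pos_k) < 0$, as claimed. Case \eqref{eq_descentTminus} is symmetric: $\phi_i(\pos_k) < 0$ forces $\pos_k \in \setTminus(\phi_i)$, the sign change forces $G_{\phi_i}(\pos_k) > 0$, and the $\setTminus$-branch $G_{\phi_i}(\pos_k) = -\min(d\redPhiObjectiveFunction(\phi_i)(\pos_k),0)$ yields $d\redPhiObjectiveFunction(\phi_i)(\pos_k) < 0$.

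There is essentially no hard step here; the proof is a direct bookkeeping of signs, modulo the nondegeneracy assumptions $\theta_i \in (0,\pi)$ and $G_{\phi_i} \not\equiv 0$ (both of which are implicit in the fact that the update \eqref{eq_update_slerp} is well-defined and that $\phi_i$ is not already a stationary point in the sense of \eqref{eq_optiCondition}). The only mild subtlety worth flagging in the write-up is the disjointness/exclusion argument that pins $\pos_k$ into the correct one of $\setTplus(\phi_i)$, $\setTminus(\phi_i)$ based on the sign of $\phi_i(\pos_k)$, which is what lets us pick the right branch of the piecewise definition \eqref{eq_defGphi}.
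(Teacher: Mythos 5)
Your proof is correct and follows essentially the same argument as the paper: evaluate the slerp update pointwise at $\pos_k$, use the strict positivity of the sine coefficients to deduce that a sign change forces $G_{\phi_i}(\pos_k)$ to have the opposite sign to $\phi_i(\pos_k)$, and then read off $d\mathcal J(\phi_i)(\pos_k)<0$ from the appropriate branch of \eqref{eq_defGphi}. Your explicit remark that the sign of $\phi_i(\pos_k)$ pins the node into the correct set $\setTplus(\phi_i)$ or $\setTminus(\phi_i)$ is a slightly more careful rendering of what the paper states implicitly, but it is not a different route.
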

\begin{proof}
    Let $\pos_k \in \setTplus(\phi_i)$, i.e. $\phi_i(\pos_k) > 0$ and assume that $\phi_{\iter+1}(\pos_k) < 0$. Since $\mbox{sin}(\theta)>0$ and $\mbox{sin}(s\theta)>0$ for all $\theta \in (0, \pi)$ and $s\in (0,1)$, it follows from \eqref{eq_update_slerp} that $G_{\phi_i}(x_k) < 0$ and thus, by \eqref{eq_defGphi}, $d\mathcal J(\phi_i)(\pos_k) < 0$ as claimed in \eqref{eq_descentTplus}. An analogous argument yields \eqref{eq_descentTminus}.
\end{proof}

We can also show that $G_\phi$ constitutes a descent direction for $\pos_k \in \setS$.
\begin{lemma}
    Let $\phi \in S^1_h(D)$ and suppose that
    \begin{align}   \label{eq_limsubsuper}
    \underset{\eps \searrow 0}{\mbox{lim }} \frac{\mathcal J(S_{k,\eps} \phi)-\mathcal J(\phi)}{|\Omega(S_{k,\eps} \phi) \symmDiff \Omega(\phi)|} = - \underset{\eps \nearrow 0}{\mbox{lim }} \frac{\mathcal J(S_{k,\eps} \phi)-\mathcal J(\phi)}{|\Omega(S_{k,\eps} \phi) \symmDiff \Omega(\phi)|}.
    \end{align}
    Let $\pos_k \in \setS(\phi)$ be fixed and let $\phi^{\kappa}$ be the level set function according to \eqref{eq_update_slerp} with line search parameter $\kappa \in (0,1)$ that is updated only in $\pos_k$, i.e.,
        $\phi^{\kappa} = a(\kappa) \phi + b(\kappa) G_{\phi} \varphi_k$
    with $a(\kappa) = \sin((1-\kappa)\theta) / \sin(\theta)$ and $b(\kappa) = \sin(\kappa \theta) / \left( \sin(\theta) \|G_{\phi}\|_{L_{2}(D)} \right)$. Moreover assume that $|d \mathcal J(\phi)(\pos_k)|>0$. Then there exists $\overline \kappa \in (0,1)$ such that for all $\kappa \in (0,\overline \kappa)$
    \begin{align*}
        \mathcal J(\phi^\kappa) < \mathcal J(\phi).
    \end{align*}
\end{lemma}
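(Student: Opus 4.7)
The plan is to reduce the SLERP update at node $\pos_k$ to a pure shape perturbation of the form $S_{k,\tilde\eps}\phi$ with a small parameter $\tilde\eps=\tilde\eps(\kappa)$ that vanishes as $\kappa\searrow 0$, and then to invoke the definition of the numerical \topshapeDerivative derivative \eqref{eq::topDerivative} together with the symmetry hypothesis \eqref{eq_limsubsuper}. Concretely, for $\theta\in(0,\pi)$ and small $\kappa>0$ both coefficients $a(\kappa)$ and $b(\kappa)$ are strictly positive, so the rescaled function $\tilde\phi^\kappa := \phi^\kappa/a(\kappa)$ shares the nodal sign pattern of $\phi^\kappa$. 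Since $\mathcal J$ depends on $\phi$ only through the polygonal domain $\Omega(\phi)$, this gives $\mathcal J(\phi^\kappa)=\mathcal J(\tilde\phi^\kappa)$. Setting $\tilde\eps(\kappa):=(b(\kappa)/a(\kappa))\,G_\phi(\pos_k)$, the definition \eqref{eq_def_phiEps} then yields $\tilde\phi^\kappa = S_{k,\tilde\eps(\kappa)}\phi$, where by \eqref{eq_defGphi} the sign of $\tilde\eps(\kappa)$ is opposite to that of $d\mathcal J(\phi)(\pos_k)$, and $\tilde\eps(\kappa)\to 0$ as $\kappa\searrow 0$.

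Next I would split on the sign of $d\mathcal J(\phi)(\pos_k)$. If $d\mathcal J(\phi)(\pos_k)<0$, then $\tilde\eps(\kappa)\searrow 0$, and the definition \eqref{eq::topDerivative} for $\pos_k\in\setS$ directly gives
\[
\lim_{\kappa\searrow 0}\frac{\mathcal J(\phi^\kappa)-\mathcal J(\phi)}{|\Omega(\phi^\kappa)\symmDiff\Omega(\phi)|} \;=\; d\mathcal J(\phi)(\pos_k) \;<\; 0.
\]
If instead $d\mathcal J(\phi)(\pos_k)>0$, then $\tilde\eps(\kappa)\nearrow 0$, and the symmetry hypothesis \eqref{eq_limsubsuper} yields
\[
\lim_{\kappa\searrow 0}\frac{\mathcal J(\phi^\kappa)-\mathcal J(\phi)}{|\Omega(\phi^\kappa)\symmDiff\Omega(\phi)|} \;=\; -\,d\mathcal J(\phi)(\pos_k) \;<\; 0.
\]
Since $\pos_k\in\setS$ and $|d\mathcal J(\phi)(\pos_k)|>0$ imply $|\Omega(\phi^\kappa)\symmDiff\Omega(\phi)|>0$ for all sufficiently small $\kappa>0$ (cf.\ Remark \ref{remark::area}; the existence of the one-sided limit defining $d\mathcal J(\phi)(\pos_k)$ with a nonzero value forces the denominator to be positive in a punctured neighborhood of $0$), the numerator $\mathcal J(\phi^\kappa)-\mathcal J(\phi)$ must itself be strictly negative for such $\kappa$, which is the desired descent.

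The main obstacle is the case $d\mathcal J(\phi)(\pos_k)>0$: the SLERP step then moves $\phi(\pos_k)$ in the direction corresponding to a \emph{negative} shape-perturbation parameter, a regime that is not directly addressed by \eqref{eq::topDerivative}, which only involves the limit $\eps\searrow 0$. The hypothesis \eqref{eq_limsubsuper} is precisely what bridges this by equating the two one-sided limits up to sign; without it, one could not conclude descent along $G_\phi$ at such a node from the formulas of Theorem \ref{thm_numTopShapeDer_formula} alone. A secondary, more mundane technicality is the preliminary reduction from the SLERP update (which rescales every nodal component by $a(\kappa)$) to a one-node shape perturbation via division by $a(\kappa)>0$; this small algebraic maneuver is what allows the vector-valued SLERP correction to be identified with a single scalar shape-perturbation parameter at node $\pos_k$.
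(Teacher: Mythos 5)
Your proposal is correct and follows essentially the same route as the paper's proof: exploit that $\mathcal J$ is invariant under positive rescaling of $\phi$ to identify $\phi^\kappa$ with the one-node perturbation $S_{k,(b(\kappa)/a(\kappa))G_\phi(\pos_k)}\phi$, then use the definition \eqref{eq::topDerivative} when $d\mathcal J(\phi)(\pos_k)<0$ and the symmetry hypothesis \eqref{eq_limsubsuper} when $d\mathcal J(\phi)(\pos_k)>0$. Your added remark on the positivity of the symmetric-difference denominator only makes explicit a step the paper leaves implicit.
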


\begin{proof}
    Suppose that $0 > d \mathcal J(\phi)(\pos_k)$. Then it follows from \eqref{eq::topDerivative} that $\mathcal J(\phi + \eps \varphi_k) < \mathcal J(\phi)$ for $\eps >0$ small enough.
    Thus, since $a(\kappa), b(\kappa)>0$ for $\theta \in (0, \pi)$ and $\kappa \in (0,1)$ and since $\mathcal J(\phi^\kappa) = \mathcal J(\frac{1}{a(\kappa)} \phi^\kappa)$, it follows
    \begin{align*}
        \mathcal J(\phi^\kappa) = \mathcal J(\phi + b(\kappa)/a(\kappa) G_{\phi} \varphi_k) = \mathcal J(\phi_i - b(\kappa)/a(\kappa) d \mathcal J(\phi_i)(\pos_k) \varphi_k) < \mathcal J(\phi_i)
    \end{align*}
    for $\kappa>0$ small enough since $-b(\kappa)/a(\kappa) d\mathcal J(\phi_i)(\pos_k)>0$ and $b(\kappa)/a(\kappa) \rightarrow 0$ as $\kappa \searrow 0$.
    On the other hand, if $0 < d \mathcal J(\phi_i)(\pos_k)$, it follows from \eqref{eq::topDerivative} and \eqref{eq_limsubsuper} that $\mathcal J(\phi + \eps \varphi_k) < \mathcal J(\phi)$ for $\eps <0$ small enough and further for $\kappa$ small enough
    \begin{align*}
        \mathcal J(\phi^\kappa)  = \mathcal J(\phi - b(\kappa)/a(\kappa) d \mathcal J(\phi)(\pos_k) \varphi_k) < \mathcal J(\phi).
    \end{align*}
\end{proof}

\begin{remark}
    In the continuous setting, the property corresponding to \eqref{eq_limsubsuper} is fulfilled for smooth domains which can be seen as follows. Let $\Omega_t^V = (\textrm{id}+t V)(\Omega)$ and note that $\Omega_{-t}^V = \Omega_t^{-V}$. Then, by Lemma \ref{LEM_SYMDIF},
    \begin{align*}
        \underset{s \nearrow 0}{\mbox{lim }} \frac{|\Omega_s^V \symmDiff \Omega|}{s}
     = -\underset{t \searrow 0}{\mbox{lim }} \frac{|\Omega_{t}^{-V} \symmDiff \Omega|}{t}
     = - \int_{\partial \Omega} |V \cdot n| \; \mbox d S_x = -\underset{s \searrow 0}{\mbox{lim }} \frac{|\Omega_{s}^{V} \symmDiff \Omega|}{s}
    \end{align*}
    and, assuming differentiability of $s \mapsto \mathfrak g(\Omega_s^V)$,
    \begin{align*}
        \underset{s \searrow 0}{\mbox{lim }} \frac{\mathfrak g(\Omega_s^V) - \mathfrak g(\Omega)}{|\Omega_s^V \symmDiff \Omega|} =
        \frac{\underset{s \rightarrow 0}{\mbox{lim }} \left( \mathfrak g(\Omega_s^V) - \mathfrak g(\Omega)\right) / s}{\underset{s \searrow 0}{\mbox{lim }} |\Omega_s^V \symmDiff \Omega| / s} = -\underset{s \nearrow 0}{\mbox{lim }} \frac{\mathfrak g(\Omega_s^V) - \mathfrak g(\Omega)}{|\Omega_s^V \symmDiff \Omega|}.
    \end{align*}

    In the discrete case, however, there may occur situations where the limits in \eqref{eq_limsubsuper} do not coincide. This can be the case in particular in situations where $\phi(\pos_k) = 0$. We remark that this issue seemed not to cause problems in our numerical experiments.

\end{remark}

\begin{remark}
    In practice it turned out to be advantageous to include a smoothing step of the level set function. Thus, we chose the following update strategy: We first set
    \begin{equation*}
        \psi = \frac{1}{\sin(\theta_\iter)}\left(\sin((1-\kappa_\iter)\theta_\iter) \phi_\iter + \sin(\kappa_\iter\theta_\iter)\frac{G_{\phi_i}}{\|G_{\phi_i}\|_{L_{2}(D)}} \right),
    \end{equation*}
    with the same notation as above before smoothing the level set function in $\setTminus(\psi) \cup \setTplus(\psi)$ by
    \begin{equation}
	\hat\psi(\pos_k) = \begin{cases}
		\frac{\sum_{i\in R_{\pos_k}} \psi(\pos_i) }{|R_{\pos_k}|} \quad &\text{for } \pos_k \in \setTminus(\psi)\cup\setTplus(\psi), \\[8pt]
		\psi(\pos_k) \quad &\text{for } \pos_k \in \setS.
	\end{cases}
    \end{equation}
    Finally, the level-set function is normalized and the next iterate is given by
    \begin{equation}
        \levelset_{i+1} = \frac{ \hat\psi}{\| \hat\psi\|_{L_{2}(D)}}.
    \end{equation}
\end{remark}
\black

\subsubsection{Numerical results}
As an initial design for the optimisation, we take the empty set, $\Omega = \emptyset$. This is realized by choosing $\levelset_0 = 1/\|1\|_{L_2(D)}$ as the initial level set function. We use the algorithm outlined in Section \ref{sec::algorithm} 
to update this level set function. We terminated the algorithm after the fixed number of 800 iterations. The final as well as some intermediate configurations are illustrated in Figures  \ref{fig:holes-pics2-3}-\ref{fig:holes-pics2-7} for the five different levels of discretization.
\newcommand{\myincludepic}[2]{\begin{subfigure}{0.49\linewidth}\centering\includegraphics[width=0.7\linewidth,trim=0cm 0cm 0cm 0cm,clip]{holes-pics2-1e+20_1_1-#1}\caption{#2 iteration}
\end{subfigure}}
\newcommand{\picE}{100}
\newcommand{\picText}[1]{\centering
	\includegraphics[width=0.5\linewidth]{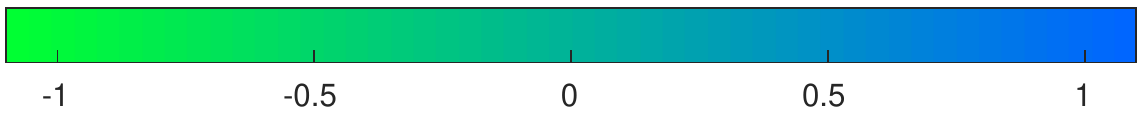}
	\caption{Evolution of the level-set function for the #1 nodes mesh}}
\begin{figure}
	\myincludepic{3-1}{1}
	\myincludepic{3-2}{2}
	
	\myincludepic{3-10}{10}
	\myincludepic{3-20}{20}
	
	\myincludepic{3-\picE}{\picE}
	\myincludepic{3-800}{800}

	\picText{145}
	\label{fig:holes-pics2-3}
\end{figure}
\begin{figure}
	\myincludepic{4-1}{1}
	\myincludepic{4-2}{2}
	
	\myincludepic{4-10}{10}
	\myincludepic{4-20}{20}
	
	\myincludepic{4-\picE}{\picE}
	\myincludepic{4-800}{800}
	\picText{545}
	\label{fig:holes-pics2-4}
\end{figure}
\begin{figure}
	\myincludepic{5-1}{1}
	\myincludepic{5-2}{2}
	
	\myincludepic{5-10}{10}
	\myincludepic{5-20}{20}
	
	\myincludepic{5-\picE}{\picE}
	\myincludepic{5-800}{800}
	\picText{2113}
	\label{fig:holes-pics2-5}
\end{figure}
\begin{figure}
	\myincludepic{6-1}{1}
	\myincludepic{6-2}{2}
	
	\myincludepic{6-10}{10}
	\myincludepic{6-20}{20}
	
	\myincludepic{6-\picE}{\picE}
	\myincludepic{6-800}{800}
	
	\picText{8321}
	\label{fig:holes-pics2-6}
\end{figure}
\begin{figure}
	\myincludepic{7-1}{1}
	\myincludepic{7-2}{2}

	\myincludepic{7-10}{10}
	\myincludepic{7-20}{20}

	\myincludepic{7-\picE}{\picE}
	\myincludepic{7-800}{800}

	\picText{33025}
	\label{fig:holes-pics2-7}
\end{figure}
\begin{figure}
	\centering
	\begin{tabular}{cc}
		\includegraphics[width=0.49\linewidth]{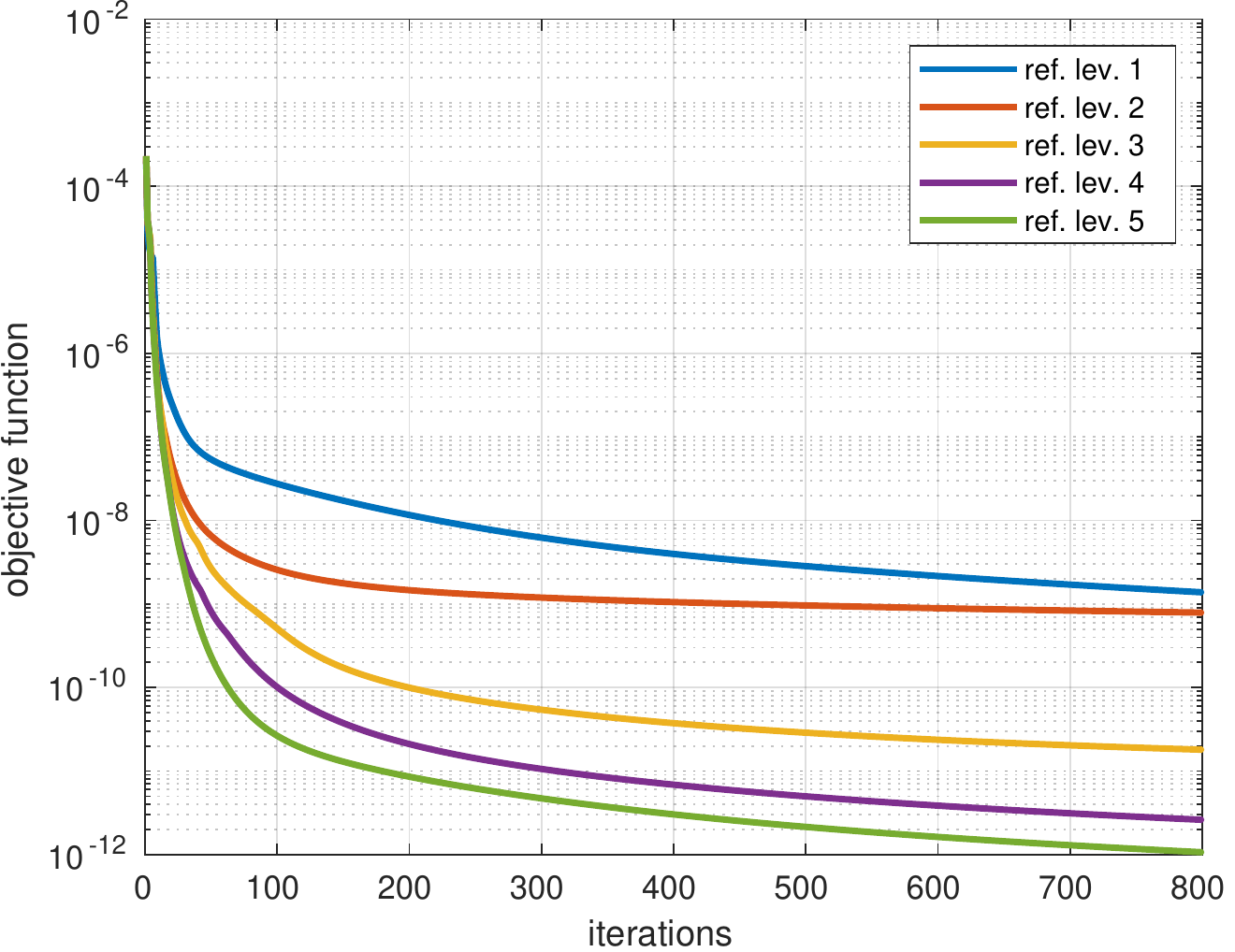}
		&
		\includegraphics[width=0.49\linewidth]{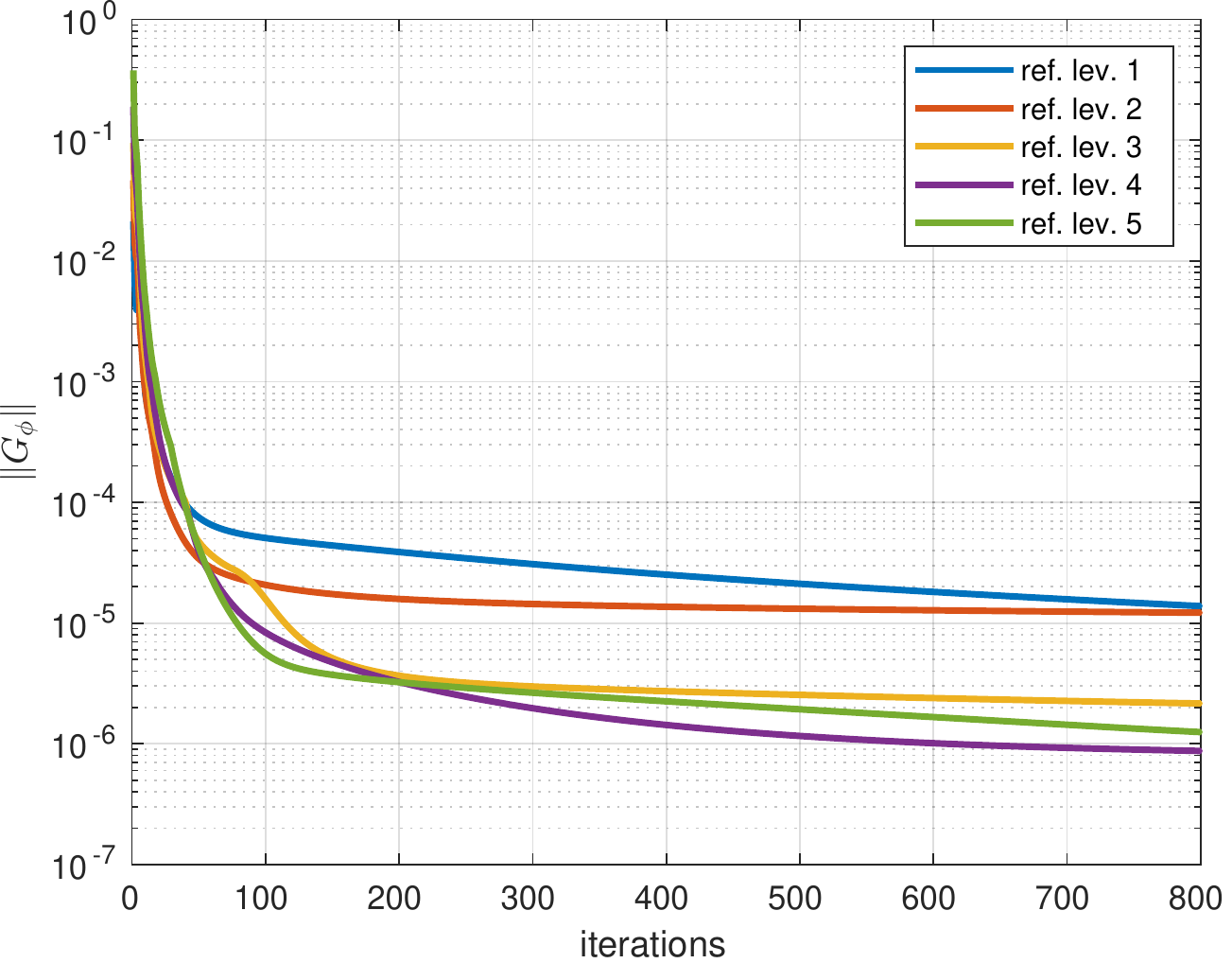}
		\\
		(a) & (b) 
	\end{tabular}
	\caption{Evolution of the objective function (a) and the norm of the \topshapeDerivative derivative (b) in course of the optimization.}
	\label{fig:holeshistoryobjective-1}
\end{figure}

We observe that in all cases the two circles are recovered with high accuracy. In Figure \ref{fig:holeshistoryobjective-1} the evolution of the objective function as well as of the norm of the generalized numerical \topshapeDerivative derivative is plotted. We observe that objective function decreases fast and after 800 iterations a reduction by a factor of approximately $10^{-5}-10^{-8}$ could be achieved. Moreover, we observe that the norm of the \topshapeDerivative derivative decreases continuously, more and more approaching the optimality condition \eqref{eq_optiCondition}.

\section{Conclusions} \label{sec::conclusion}
In this work we presented a new sensitivity concept, called the \topshapeDerivative derivative which is based on a level set representation of a domain. This approach allows for a unified sensitivity analysis for shape and topological perturbations, which we carried out for a discretized PDE-constrained design optimization problem in two space dimensions.
For the discretization we used a standard first order finite element method which does not account for the interface position in the approximation space. Therefore, kinks in the solution of the state and adjoint equations at material interfaces are not resolved.
Comparing the computed sensitivities of the discretized problem with the discretization of the continuous topological and shape derivatives, we saw that certain terms did not appear in the former approach. These lack of these terms can be traced back to the inability of the chosen discretization method to represent such kinks.
Thus, it would be interesting to study discretization methods which do account for these kinks, \eg XFEM or CutFEM, and perform the sensitivity analysis in these settings in future work. Furthermore, the extension to higher space dimensions, higher polynomial degree and other PDE constraints such as elasticity would be further interesting research directions.



%
\section*{Conflict of interest}
The authors declare that they have no conflict of interest.

\bibliographystyle{spmpsci}      
\bibliography{literature.bib}   
\appendix
\section{Appendix}

\subsection{Proof of Lemma \ref{LEM_SYMDIF}} \label{app::proofSymDif}
\begin{proof}
We investigate the limit 
    \begin{align*}
	\underset{t \searrow 0}{\mbox{lim}} \frac{1}{t} |\Omega_t \symmDiff \Omega| = &\underset{t \searrow 0}{\mbox{lim }} \frac{1}{t} \left( \int_{\Omega_t\setminus \Omega}dx + \int_{\Omega\setminus \Omega_t} dx\right). 
	\end{align*}
	Let $c:[0,1] \rightarrow \partial \Omega$ be a smooth parametrization of the boundary $\partial \Omega$ in counter-clockwise direction with smooth inverse and define $c_t:[0,1] \rightarrow \partial\Omega_t$,
	\begin{equation}
	c_t(s) := c(s) + t V(c(s)) = (\textrm{id} + t V)(c(s)).
	\end{equation}
	The derivative is given by
	\begin{equation}
	\dot \defC(s) = \frac{d}{ds}\defC(s) = \dot c(s) + t \, \partial V(c(s))  \dot c(s) = (I + t \, \partial V(c(s))) \dot c(s).
	\end{equation}
	\begin{figure}
		\centering
		\includegraphics[width=0.4\linewidth]{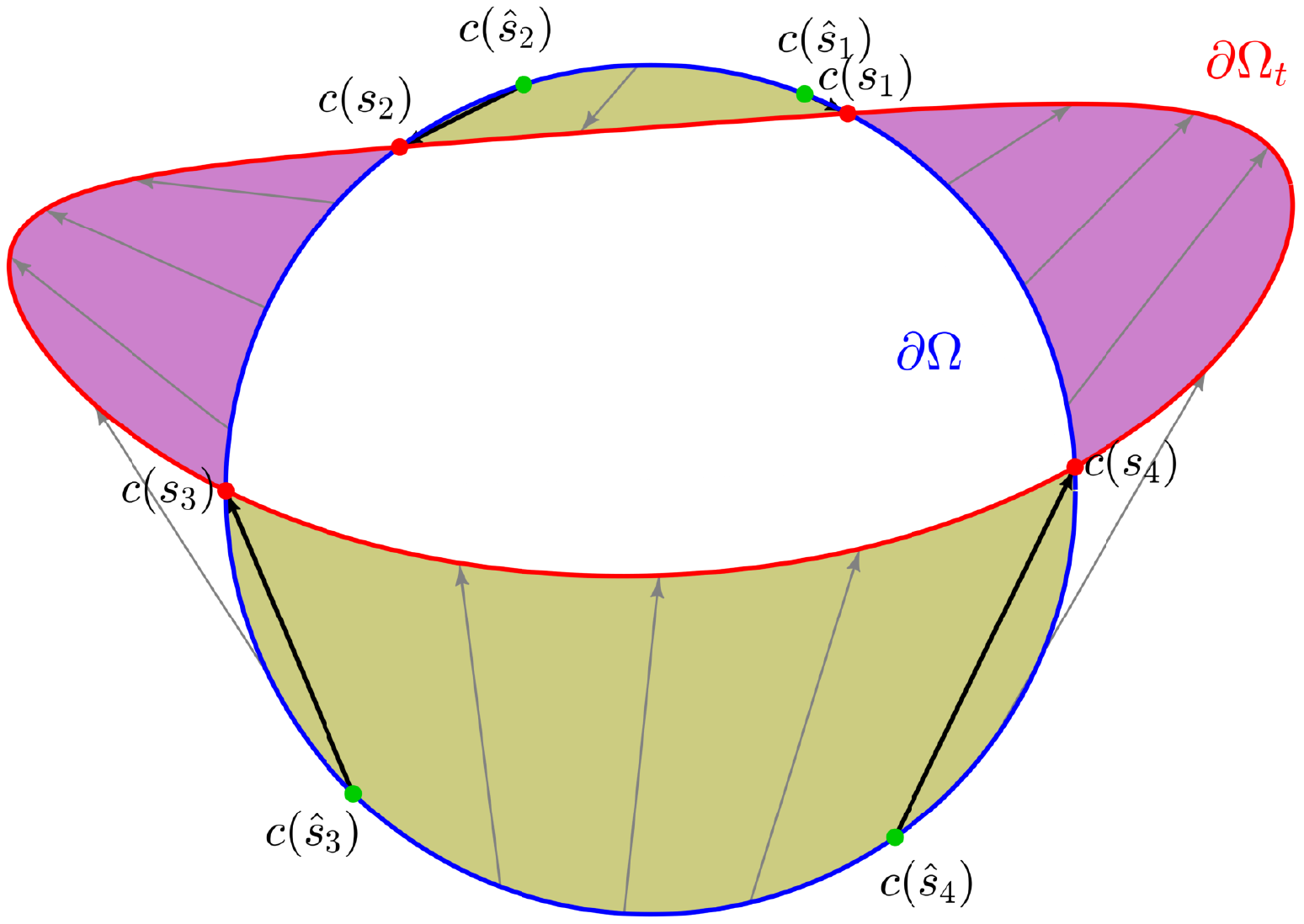}
		\caption{Illustration of the pairs $(s_i(t),\,\hat s_i(t))$. 
		 }
		\label{fig:picshapederivative}
	\end{figure}
	Let $\overline t >0$ sufficiently small such that, for all $t \in (0,\overline t)$, the number of intersection points between $\partial \Omega$ and $\partial \Omega_t$ is a fixed number $N$. To each intersection point we associate a pair of numbers $(s_i(t),\,\hat s_i(t))$ such that
	\begin{equation}\label{eq::defineIntersection}
	c(s_i(t)) = \defC(\hat s_i(t)) = c(\hat s_i(t)) + t V(c(\hat s_i(t))),
	\end{equation}
	see Figure \ref{fig:picshapederivative} for an illustration of the situation.
	The symmetric difference can now be written as $\Omega_t \symmDiff \Omega = \bigcup_{i=1}^{N} A_i(t)$, where $A_i(t)$ denotes the region between $\partial \Omega$ and $\partial \Omega_t$ bounded by the intersection points $c(s_i(t))$ and $c(s_{i+1}(t))$ (here, $A_N(t)$ is bounded by $c(s_N(t))$ and $c(s_1(t))$). More precisely, $A_i(t)$ is bounded by the two segments $\{c(s): s \in [s_{i}(t), s_{i+1}(t)]\}$ and $\{c_t(s): s \in [\hat s_{i}(t), \hat s_{i+1}(t)]\}$. Now let $i \in \{1, \dots, N\}$ fixed. The volume of $A_i(t)$ can be expressed by the divergence theorem as 
	\begin{align}
	|A_i(t)| &= \int_{A_i(t)} \frac{1}{2}\mbox{div} \begin{pmatrix} x_1 \\ x_2 \end{pmatrix} \, \mbox dx = 
	\frac{1}{2}\int_{\partial A_i(t)} x \cdot n(x) \,\mbox dS_x \\
	&= \frac{1}{2} \left(\int_{s_i(t)}^{s_{i+1}(t)} c(s)^\top  R_i \frac{\dot c(s)}{|\dot c(s)|} |\dot c(s)|\,\mbox ds + \int_{\hat s_i(t)}^{\hat s_{i+1}(t)} \defC( s)^\top  (-R_i) \frac{\dot c_t( s)}{|\dot \defC(s)|} |\dot \defC(s)|\,\mbox d s \right)
	\end{align}
	with the rotation matrix
	\begin{equation}
	R_i = d_i  \begin{bmatrix}
	0 & 1\\
	-1 & 0
	\end{bmatrix}, \quad \mbox{ where } d_i = \begin{cases}
	1, & \mbox{if } A_i(t) \subset \Omega \setminus \Omega_t, \\
    -1, &\mbox{if } A_i(t) \subset \Omega_t \setminus \Omega,
    \end{cases}
	\end{equation}
	 such that $R_i\frac{\dot c(s)}{|\dot c(s)|}$ and $(-R_i)\frac{\dot c_t(s)}{|\dot c_t(s)|}$ are the unit normal vectors pointing out of $A_i(t)$ at $c(s)$ and $c_t(s)$, respectively. Note that $R_i^\top = - R_i$. For further use we note that
	\begin{align*}
	\defC(s)^\top  R_i \,\dot \defC(s) &=  [c(s) + t V(c(s))]^\top R_i\,(I + t \, \partial V(c(s)))\,\dot c(s) \\
	&= c(s)^\top R_i \, \dot c(s) + t\left(V(c(s))^\top R_i\, \dot c(s) + c(s)^\top R_i\cdot\partial V(c(s))\,\dot c(s) \right) + t^2 V(s)^\top R_i\,\partial V(c(s))\,\dot c(s).
	\end{align*}
	Furthermore, let
	\begin{equation}
	\bar s_i := \mylim s_i(t), 
	\end{equation}
	and we observe from \eqref{eq::defineIntersection} that $ \mylim \hat s_i(t) = \bar s_i$.
	Moreover, since we assumed the inverse of $c$ to be smooth (in particular Lipschitz continuous with constant $L$), we have that the limit
	\begin{align} \label{eq_lim_sisit}
        \mylim \frac{1}{t} |s_i(t) - \hat s_i(t)| \leq \mylim  \frac{L}{t} |c(s_i(t)) - c(\hat s_i(t))| = L |V(c(\bar s_i))|,  
	\end{align}
	exists. Here we used \eqref{eq::defineIntersection} and the continuity of $V$ and $c$.
	
	With the abbreviation $f_c(s) := c(s)^\top  R_i\, \dot c(s)$ we have
	\begin{align}
	|A_i(t)|&=\frac 1 2 \Bigg( \int_{\hat s_i(t)}^{s_{i}(t)} f_c(s) \,ds + \int_{s_i(t)}^{s_{i+1}(t)} f_c(s) \,ds +\int_{s_{i+1}(t)}^{\hat s_{i+1}(t)} f_c(s) \,ds- \int_{\hat s_i(t)}^{\hat s_{i+1}(t)} f_{\defC}(s) \,d s  \nonumber \\
	& \qquad - \int_{\hat s_i(t)}^{s_{i}(t)} f_c(s) \,ds - \int_{s_{i+1}(t)}^{\hat s_{i+1}(t)} f_c(s) \,ds \Bigg) \nonumber \\
	&= \frac 1 2\Bigg( \int_{\hat s_i(t)}^{\hat s_{i+1}(t)} \big[f_c(s)-f_{\defC}(s) \big] \,d s - \int_{\hat s_i(t)}^{s_{i}(t)} f_c(s) \,ds - \int_{s_{i+1}(t)}^{\hat s_{i+1}(t)} f_c(s) \,ds \Bigg).
	\end{align}
	Thus, 
	$\frac{1}{t}|A_i(t)| 	= \frac{1}{2}\big(  B_i(t) - C_i(t) + C_{i+1}(t)\big)$
	with
	\begin{align}
	B_i(t)&=-\int_{\hat s_i(t)}^{\hat s_{i+1}(t)} \big[ V(c(s))^\top R_i \, \dot c(s) + c(s)^\top R_i \,\partial V(c(s)) \,\dot c(s) \big] \,\mbox ds + \mathcal O(t), \\
	C_i(t)&=\frac{1}{t}\int_{\hat s_i(t)}^{s_i(t)} f_c(s) \, \mbox ds.
	\end{align}
	In order to compute $B_i(t)$ we note that
	\begin{align}
	\frac{d}{ds}(V(c(s))^\top R_i \, c(s)) &= V(c(s))^\top R_i \, \dot c(s) + (\partial V(c(s)) \dot c(s))^\top \, R_i c(s) \nonumber\\
	&= V(c(s))^\top R_i \, \dot c(s) - c(s)^\top R_i \, \partial V(c(s)) \, \dot c(s) \nonumber
	\end{align}
	
	Thus,
	\begin{align}
	B_i(t)&= \int_{\hat s_i(t)}^{\hat s_{i+1}(t)} \left[\frac{d}{ds}(V(c(s))^\top R_i \, c(s)) - 2V(c(s))^\top R_i\, \dot c(s) \right] \,ds + \mathcal O(t) \nonumber \\
	&=  V(c(\hat s_{i+1}(t)))^\top R_i \, c(\hat s_{i+1}(t)) - V(c(\hat s_{i}(t)))^\top R_i \, c(\hat s_{i}(t)) - 2 \int_{\hat s_i(t)}^{\hat s_{i+1}(t)} V(c(s))^\top  R_i \, \dot c(s) \, ds+ \mathcal O(t). \label{eq_Bi_t}
	\end{align}
	For $C_i(t)$, by the mean value theorem there exists $\tilde s_i \in [\hat s_i(t), s_i(t)]$ such that
	\begin{align}\label{eq_Ci_t}
        C_i(t) = \frac{1}{t} \int_{\hat s_i(t)}^{s_i(t)} c(s)^\top R_i \dot c(s) \, \mbox ds =  c(\tilde s_i)^\top R_i \dot c(\tilde s_i) \frac{1}{t}|s_i(t) - \hat s_i(t)|. 
	\end{align}
	
	On the other hand, we note that with \eqref{eq::defineIntersection} the following vector identity holds
	\begin{equation*}
	t V(c(\hat s_{i}(t))) = 
	c(s_{i}(t)) - c(\hat s_{i}(t)) = 
	\int_{\hat s_{i}(t)}^{s_{i}(t)} \dot c(s) \,ds =
	(s_{i}(t)-\hat s_{i}(t))\int_{0}^{1} \dot c(\hat s_{i}(t)+a(s_{i}(t)-\hat s_{i}(t))) \,da 
	\end{equation*}
	and thus,
	\begin{equation}\label{eq::proofShape_resultA}
	\dot c(\bar s_{i})\mylim \frac{1}{t}(s_{i}(t)-\hat s_{i}(t)) = \mylim \left( \frac{1}{t}(s_{i}(t)-\hat s_{i}(t))\int_{0}^{1} \dot c(\hat s_{i}(t)+a(s_{i}(t)-\hat s_{i}(t))) \,da \right) =  V(c(\bar s_{i})). 
	\end{equation}
	
	Combining \eqref{eq_Bi_t} and \eqref{eq_Ci_t} we get	
	\begin{align*}
        \mylim \frac1t |A_i(t)| =& \mylim \frac12 (B_i(t) - C_i(t)+ C_{i+1}(t))\\
        =& \mylim \frac12 \left( - 2 \int_{\hat s_i(t)}^{\hat s_{i+1}(t)}  V(c(s))^\top  R_i \, \dot c(s) \, ds+ \delta_i(t) - \delta_{i+1}(t) \right)
	\end{align*}
    with
    \begin{align*}
        \delta_i(t) &:=
         c(\hat s_{i}(t))^\top R_i V(c(\hat s_{i}(t)))  -  c(\tilde s_i)^\top R_i \dot c(\tilde s_i) \frac{1}{t}|s_i(t)- \hat s_i(t)| .
    \end{align*}
    From \eqref{eq::proofShape_resultA}, it follows that $ \mylim \delta_i(t) = 0 = \mylim \delta_{i+1}(t) $ and thus
    \begin{align*}
        \mylim \frac1t |A_i(t)| =&-  \int_{\bar s_i}^{\bar s_{i+1}}  V(c(s))^\top  R_i \, \dot c(s) \, ds = - \int_{c(\bar s_i)}^{c(\bar s_{i+1})} V(x) \cdot n_i(x) \, \mbox d S_x
    \end{align*}
    Here we used
    \begin{equation*}
        n_i(x) = \begin{cases}
                    n(x) & \mbox{ if } V(x) \cdot n(x) <0, \\
                    -n(x) & \mbox{ if } V(x) \cdot n(x) >0, 
                 \end{cases}
    \end{equation*}
    where $n$ denotes the unit normal vector pointing out of $\Omega$. Thus we have found
    \begin{equation*}
        \mylim \frac1t |\Omega_t \symmDiff \Omega| = \sum_{i=1}^N \mylim \frac1t |A_i(t)| = \int_{\partial \Omega} |V(x) \cdot n(x)| \; \mbox dS_x.
    \end{equation*}
	
%

\end{proof}

\subsection{Matrix entries for the numerical shape derivative}\label{app::shapeDerivative}
The matrix entries for $d_k\mathbf m_{l}^I = \detJl d_k\bar{\mathbf m}_{l}^I$ and $d_k\mathbf f_{l}^I = \detJl d_k\bar{\mathbf f}_{l}^I$ \eqref{eq::formulaShapeDerivative} are given by
{\tiny
	\begin{align*}
	d_k\bar m_l^{A^\pm}[1,1]&=\pm\frac{\phi_{l_1}^4\left(\phi_{l_2}^3+\phi_{l_2}^2\phi_{l_3}+\phi_{l_2}\phi_{l_3}^2+\phi_{l_3}^3\right)-4\phi_{l_1}\phi_{l_2}^3\phi_{l_3}^3+6\phi_{l_1}^2\phi_{l_2}^2\phi_{l_3}^2\left(\phi_{l_2}+\phi_{l_3}\right)-4\phi_{l_1}^3\phi_{l_2}\phi_{l_3}\left(\phi_{l_2}^2+\phi_{l_2}\phi_{l_3}+\phi_{l_3}^2\right)}{4{\left(\phi_{l_1}-\phi_{l_2}\right)}^4{\left(\phi_{l_1}-\phi_{l_3}\right)}^4},\\
d_k\bar m_l^{A^\pm}[1,2]&=\mp\frac{\phi_{l_1}^2\left(3\phi_{l_1}^2\phi_{l_2}^2+2\phi_{l_1}^2\phi_{l_2}\phi_{l_3}+\phi_{l_1}^2\phi_{l_3}^2-8\phi_{l_1}\phi_{l_2}^2\phi_{l_3}-4\phi_{l_1}\phi_{l_2}\phi_{l_3}^2+6\phi_{l_2}^2\phi_{l_3}^2\right)}{12{\left(\phi_{l_1}-\phi_{l_2}\right)}^4{\left(\phi_{l_1}-\phi_{l_3}\right)}^3}=d_k\bar m_l^{A^\pm}[2,1],\\
d_k\bar m_l^{A^\pm}[1,3]&=\mp\frac{\phi_{l_1}^2\left(\phi_{l_1}^2\phi_{l_2}^2+2\phi_{l_1}^2\phi_{l_2}\phi_{l_3}+3\phi_{l_1}^2\phi_{l_3}^2-4\phi_{l_1}\phi_{l_2}^2\phi_{l_3}-8\phi_{l_1}\phi_{l_2}\phi_{l_3}^2+6\phi_{l_2}^2\phi_{l_3}^2\right)}{12{\left(\phi_{l_1}-\phi_{l_2}\right)}^3{\left(\phi_{l_1}-\phi_{l_3}\right)}^4} = d_k\bar m_l^{A^\pm}[3,1],\\
d_k\bar m_l^{A^\pm}[2,2]&=\pm\frac{\phi_{l_1}^3\left(3\phi_{l_1}\phi_{l_2}+\phi_{l_1}\phi_{l_3}-4\phi_{l_2}\phi_{l_3}\right)}{12{\left(\phi_{l_1}-\phi_{l_2}\right)}^4{\left(\phi_{l_1}-\phi_{l_3}\right)}^2},\\
d_k\bar m_l^{A^\pm}[2,3]&=\pm\frac{\phi_{l_1}^3\left(\phi_{l_1}\phi_{l_2}+\phi_{l_1}\phi_{l_3}-2\phi_{l_2}\phi_{l_3}\right)}{12{\left(\phi_{l_1}-\phi_{l_2}\right)}^3{\left(\phi_{l_1}-\phi_{l_3}\right)}^3} = d_k\bar m_l^{A^\pm}[3,2],\\
d_k\bar m_l^{A^\pm}[3,3]&=\pm\frac{\phi_{l_1}^3\left(\phi_{l_1}\phi_{l_2}+3\phi_{l_1}\phi_{l_3}-4\phi_{l_2}\phi_{l_3}\right)}{12{\left(\phi_{l_1}-\phi_{l_2}\right)}^2{\left(\phi_{l_1}-\phi_{l_3}\right)}^4}
, \\
	d_k\bar m_l^{B^\pm}[1,1]&=\mp\frac{\phi_{l_2}^4}{4{\left(\phi_{l_1}-\phi_{l_2}\right)}^4\left(\phi_{l_2}-\phi_{l_3}\right)},\\
d_k\bar m_l^{B^\pm}[1,2]&=\pm\frac{\phi_{l_2}^3\left(3\phi_{l_1}\phi_{l_2}-4\phi_{l_1}\phi_{l_3}+\phi_{l_2}\phi_{l_3}\right)}{12{\left(\phi_{l_1}-\phi_{l_2}\right)}^4{\left(\phi_{l_2}-\phi_{l_3}\right)}^2} = d_k\bar m_l^{B^\pm}[2,1],\\
d_k\bar m_l^{B^\pm}[1,3]&=\pm\frac{\phi_{l_2}^4}{12{\left(\phi_{l_1}-\phi_{l_2}\right)}^3{\left(\phi_{l_2}-\phi_{l_3}\right)}^2}=d_k\bar m_l^{B^\pm}[3,1],\\
d_k\bar m_l^{B^\pm}[2,2]&=\mp\frac{\phi_{l_2}^2\left(3\phi_{l_1}^2\phi_{l_2}^2-8\phi_{l_1}^2\phi_{l_2}\phi_{l_3}+6\phi_{l_1}^2\phi_{l_3}^2+2\phi_{l_1}\phi_{l_2}^2\phi_{l_3}-4\phi_{l_1}\phi_{l_2}\phi_{l_3}^2+\phi_{l_2}^2\phi_{l_3}^2\right)}{12{\left(\phi_{l_1}-\phi_{l_2}\right)}^4{\left(\phi_{l_2}-\phi_{l_3}\right)}^3},\\
d_k\bar m_l^{B^\pm}[2,3]&=\mp\frac{\phi_{l_2}^3\left(\phi_{l_1}\phi_{l_2}-2\phi_{l_1}\phi_{l_3}+\phi_{l_2}\phi_{l_3}\right)}{12{\left(\phi_{l_1}-\phi_{l_2}\right)}^3{\left(\phi_{l_2}-\phi_{l_3}\right)}^3} = d_k\bar m_l^{B^\pm}[3,2],\\
d_k\bar m_l^{B^\pm}[3,3]&=\mp\frac{\phi_{l_2}^4}{12{\left(\phi_{l_1}-\phi_{l_2}\right)}^2{\left(\phi_{l_2}-\phi_{l_3}\right)}^3}
, \\
	d_k\bar m_l^{C^\pm}[1,1]&=\pm\frac{\phi_{l_3}^4}{4{\left(\phi _{1}-\phi _{3}\right)}^4\left(\phi _{2}-\phi _{3}\right)},\\
d_k\bar m_l^{C^\pm}[1,2]&=\pm\frac{\phi_{l_3}^4}{12{\left(\phi _{1}-\phi _{3}\right)}^3{\left(\phi _{2}-\phi _{3}\right)}^2} = d_k\bar m_l^{C^\pm}[2,1],\\
d_k\bar m_l^{C^\pm}[1,3]&=\pm\frac{\phi_{l_3}^3\left(3\phi _{1}\phi _{3}-4\phi _{1}\phi _{2}+\phi _{2}\phi _{3}\right)}{12{\left(\phi _{1}-\phi _{3}\right)}^4{\left(\phi _{2}-\phi _{3}\right)}^2} = d_k\bar m_l^{C^\pm}[3,1],\\
d_k\bar m_l^{C^\pm}[2,2]&=\pm\frac{\phi_{l_3}^4}{12{\left(\phi _{1}-\phi _{3}\right)}^2{\left(\phi _{2}-\phi _{3}\right)}^3},\\
d_k\bar m_l^{C^\pm}[2,3]&=\pm\frac{\phi_{l_3}^3\left(\phi _{1}\phi _{3}-2\phi _{1}\phi _{2}+\phi _{2}\phi _{3}\right)}{12{\left(\phi _{1}-\phi _{3}\right)}^3{\left(\phi _{2}-\phi _{3}\right)}^3} = d_k\bar m_l^{C^\pm}[3,2],\\
d_k\bar m_l^{C^\pm}[3,3]&=\pm\frac{\phi_{l_3}^2\left(6\phi_{l_1}^2\phi_{l_2}^2-8\phi_{l_1}^2\phi _{2}\phi _{3}+3\phi_{l_1}^2\phi_{l_3}^2-4\phi _{1}\phi_{l_2}^2\phi _{3}+2\phi _{1}\phi _{2}\phi_{l_3}^2+\phi_{l_2}^2\phi_{l_3}^2\right)}{12{\left(\phi _{1}-\phi _{3}\right)}^4{\left(\phi _{2}-\phi _{3}\right)}^3}
, \\
	d_k\bar f_l^{A^\pm}[1]&=\mp\frac{\phi_{l_1}\left(\phi_{l_1}^2\phi_{l_2}^2+\phi_{l_1}^2\phi_{l_2}\phi_{l_3}+\phi_{l_1}^2\phi_{l_3}^2-3\phi_{l_1}\phi_{l_2}^2\phi_{l_3}-3\phi_{l_1}\phi_{l_2}\phi_{l_3}^2+3\phi_{l_2}^2\phi_{l_3}^2\right)}{3{\left(\phi_{l_1}-\phi_{l_2}\right)}^3{\left(\phi_{l_1}-\phi_{l_3}\right)}^3},\\
d_k\bar f_l^{A^\pm}[2]&=\pm\frac{\phi_{l_1}^2\left(2\phi_{l_1}\phi_{l_2}+\phi_{l_1}\phi_{l_3}-3\phi_{l_2}\phi_{l_3}\right)}{6{\left(\phi_{l_1}-\phi_{l_2}\right)}^3{\left(\phi_{l_1}-\phi_{l_3}\right)}^2}, \qquad
d_k\bar f_l^{A^\pm}[3]=\pm\frac{\phi_{l_1}^2\left(\phi_{l_1}\phi_{l_2}+2\phi_{l_1}\phi_{l_3}-3\phi_{l_2}\phi_{l_3}\right)}{6{\left(\phi_{l_1}-\phi_{l_2}\right)}^2{\left(\phi_{l_1}-\phi_{l_3}\right)}^3}
, \\
	d_k\bar f_l^{B^\pm}[1]&=\pm\frac{\phi_{l_2}^3}{3{\left(\phi_{l_1}-\phi_{l_2}\right)}^3\left(\phi_{l_2}-\phi_{l_3}\right)}, \qquad
d_k\bar f_l^{B^\pm}[2]=\mp\frac{\phi_{l_2}^2\left(2\phi_{l_1}\phi_{l_2}-3\phi_{l_1}\phi_{l_3}+\phi_{l_2}\phi_{l_3}\right)}{6{\left(\phi_{l_1}-\phi_{l_2}\right)}^3{\left(\phi_{l_2}-\phi_{l_3}\right)}^2}, \qquad
d_k\bar f_l^{B^\pm}[3]=\mp\frac{\phi_{l_2}^3}{6{\left(\phi_{l_1}-\phi_{l_2}\right)}^2{\left(\phi_{l_2}-\phi_{l_3}\right)}^2}
, \\
	d_k\bar f_l^{C^\pm}[1]&=\mp\frac{\phi_{l_3}^3}{3{\left(\phi_{l_1}-\phi_{l_3}\right)}^3\left(\phi_{l_2}-\phi_{l_3}\right)}, \qquad
d_k\bar f_l^{C^\pm}[2]=\mp\frac{\phi_{l_3}^3}{6{\left(\phi_{l_1}-\phi_{l_3}\right)}^2{\left(\phi_{l_2}-\phi_{l_3}\right)}^2}, \qquad
d_k\bar f_l^{C^\pm}[3]=\mp\frac{\phi_{l_3}^2\left(2\phi_{l_1}\phi_{l_3}-3\phi_{l_1}\phi_{l_2}+\phi_{l_2}\phi_{l_3}\right)}{6{\left(\phi_{l_1}-\phi_{l_3}\right)}^3{\left(\phi_{l_2}-\phi_{l_3}\right)}^2}
.
	\end{align*}}




\end{document}